\synctex=-1

\documentclass[11pt,reqno]{amsproc}

\numberwithin{equation}{section}
\setcounter{secnumdepth}{2}
\setcounter{tocdepth}{2}
\usepackage{todonotes}

\setlength{\parskip}{1.5ex}

\usepackage{verbatim}
\usepackage{times}
\usepackage{amsmath,amsfonts,amstext,amssymb,amsbsy,amsopn,amsthm,eucal}
\usepackage{txfonts}
\usepackage{dsfont}
\usepackage{graphicx}   
\usepackage{hyperref}
\usepackage{accents}
\usepackage{enumerate}
\usepackage{xcolor}


\setlength{\textheight}{8.50in} \setlength{\textwidth}{6.5in}
\setlength{\columnsep}{0.5in} \setlength{\topmargin}{0.0in}
\setlength{\headheight}{0in} \setlength{\headsep}{0.5in}
\setlength{\parindent}{1pc}
\setlength{\oddsidemargin}{0in}  
\setlength{\evensidemargin}{0in}

\newcommand{\RR}{\mathds{R}}

\newcommand{\Rm}{{\rm Rm}}
\newcommand{\Ric}{{\rm Ric}}

\newcommand{\Vol}{{\rm Vol}}
\newcommand{\diam}{{\rm diam}}

\newcommand{\rv}{{\rm v}}

\newcommand{\cB}{\mathcal{B}}

\newcommand{\cF}{\mathcal{F}}

\newcommand{\cM}{\mathcal{M}}
\newcommand{\cN}{\mathcal{N}}

\newcommand{\cV}{\mathcal{V}}
\newcommand{\cVM}{\mathcal{VM}}

\newcommand{\cVR}{\mathcal{VR}}

\newtheorem{theorem}{Theorem}[section]

\newtheorem{proposition}[theorem]{Proposition}
\newtheorem{lemma}[theorem]{Lemma}
\newtheorem{corollary}[theorem]{Corollary}
\theoremstyle{definition}
\newtheorem{definition}[theorem]{Definition}
\theoremstyle{remark}
\newtheorem{remark}{Remark}[section]
\theoremstyle{remark}

\theoremstyle{remark}

\theoremstyle{remark}\newtheorem{conjecture}{Conjecture}[section]
\theoremstyle{remark}\newtheorem{question}[conjecture]{Question}

\begin{document}

\title{Finite Diffeomorphism Theorem for manifolds with lower Ricci curvature and bounded energy}

\author{Wenshuai Jiang }
\address{School of Mathematical Sciences\\
Zhejiang University,  Hangzhou 310058, China}
\email{wsjiang@zju.edu.cn}
\thanks{WJ was partially supported by National Key Research and Development Program of China (No. 2022YFA1005501), NSFC (No. 12125105 and No. 12071425) and the Fundamental Research Funds
for the Central Universities 2022FZZX01-01.}
\author{Guofang Wei}
\address{Department of Mathematics\\
	University of California, Santa Barbara\\
	Santa Barbara, CA 93106}
\email{wei@math.ucsb.edu}
\thanks{ GW was partially supported by NSF DMS grant 2104704.}

\dedicatory{Dedicated to Jeff Cheeger's 80th Birthday}
\begin{abstract}
In this paper we prove that the space $\cM(n,\rv,D,\Lambda):=\{(M^n,g) \text{ closed }:  ~~\Ric\ge -(n-1),~\Vol(M)\ge \rv>0, \diam(M)\le D \text{ and }   \int_{M}|\Rm|^{n/2}\le \Lambda\}$ has at most $C(n,\rv,D,\Lambda)$ many diffeomorphism types. This removes the upper Ricci curvature bound of Anderson-Cheeger's finite diffeomorphism theorem in \cite{AnCh}. Furthermore, if $M$ is K\"ahler surface, the Riemann curvature $L^2$ bound could be replaced by the scalar curvature $L^2$ bound. 
\end{abstract}

\maketitle


\section{Introduction}

In studying the relation between geometry and topology, one of the important themes is sphere theorems and finiteness theorems. In \cite{Cheeger_finiteness} Cheeger obtained his famous finiteness theorem that Riemannian manifolds $M^n$ satisfying 
\[
|K_M | \le K,~\Vol(M)\ge \rv>0, \ \diam(M)\le D 
\]
have finitely many diffeomorphism types. Here $K_M$ is the sectional curvature of $M$. The key is showing a positive uniform lower bound on the injectivity radius for the class. Later \cite{Grove-Petersen} Grove-Petersen proved finitely many homotopy types for above class without curvature upper bound by controlling the contractibility radius. The corresponding result replacing sectional curvature lower with Ricci lower bound is only true when $n=3$, \cite{Zhu93}. When $n \ge 4$ counterexamples are given in \cite{Perelman97}.
With additional integral curvature assumption, 
Anderson-Cheeger \cite{AnCh} proved the following class of closed manifolds $M^n$
\begin{align}
|\Ric| \le n-1,~\Vol(M)\ge \rv>0, \diam(M)\le D \text{ and }   \int_{M}|\Rm|^{n/2}\le \Lambda  \label{Ricci-bothsides}
\end{align}
have finitely many  diffeomorphism types. This finiteness result is new in the sense that its Gromov-Hausdorff limit may not be a topological manifold. Hence it can not be proven by Ricci flow smoothing, instead it is done by bubble decomposition. 

Note also that the class of manifolds satisfying $|\Ric| \le n-1,~\Vol(M)\ge \rv>0, \diam(M)\le D$ automatically has a
$L^2$ integral curvature bound \cite{Jiang_Naber}. Hence in dimension 4, no extra integral curvature is needed \cite{CheegerNaber_codimension4}. 

Anderson-Cheeger's finiteness result is extended recently in \cite{Qian23}, where the two sided Ricci curvature bounds is replaced by an $L^p$ bound on the Ricci curvature with $p > n/2$, but with all scale non-collapsing condition. In this paper we extend Anderson-Cheeger's finiteness result by removing the Ricci curvature upper bound. 

 Let us denote $$\cM(n,\rv,D,\Lambda):=\{(M^n,g) \text{ closed }:  ~~\Ric\ge -(n-1),~\Vol(M)\ge \rv>0, \diam(M)\le D \text{ and }   \int_{M}|\Rm|^{n/2}\le \Lambda\}.$$  We have
\begin{theorem}\label{t:maintheorem}
For any given $n,\rv,D,\Lambda >0$, the space $\cM(n,\rv,D,\Lambda)$ has at most $C(n,\rv,D,\Lambda)$ many diffeomorphism types. 
\end{theorem}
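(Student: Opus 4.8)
The plan is to adapt the bubble-tree (bubble decomposition) strategy of Anderson--Cheeger \cite{AnCh}, but to replace every place where the two-sided Ricci bound is used by tools that survive under only a lower Ricci bound together with the $L^{n/2}$ curvature bound. The first and most important input is a $\varepsilon$-regularity / local diffeomorphism-finiteness statement: there should be $\varepsilon_0 = \varepsilon_0(n) > 0$ such that if $\Ric \ge -(n-1)$, the volume is almost Euclidean on a ball $B_r(p)$ (equivalently a weak noncollapsing on that scale), and $\int_{B_r(p)}|\Rm|^{n/2} < \varepsilon_0$, then $B_{r/2}(p)$ has a definite lower bound on its harmonic radius (or $C^{1,\alpha}$-harmonic radius), hence controlled geometry. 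This is available from the work of Cheeger--Naber on codimension-$4$ and from Jiang--Naber type estimates, and is the replacement for the elliptic estimates that in \cite{AnCh} followed from $|\Ric|\le n-1$. I would state and cite this as the key local lemma.

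Next I would set up the global structure. Cover $M \in \cM(n,\rv,D,\Lambda)$ by a controlled number of balls and run a stratification: away from a set of small $\Ric$-lower-bound content where the rescaled $L^{n/2}$ energy concentrates, the harmonic radius is bounded below, so by Cheeger--Gromov compactness only finitely many (diffeomorphism types of) "regular region" pieces occur. At the bad points one rescales; because $\int|\Rm|^{n/2}$ is scale invariant and bounded by $\Lambda$, only a definite number $N = N(n,\Lambda,\varepsilon_0)$ of such concentration scales can occur along any point, and the rescaled limits are complete noncompact Ricci-limit spaces carrying a nontrivial but finite amount of $L^{n/2}$ curvature — the "bubbles." One runs this inductively to produce a bubble tree of uniformly bounded depth and uniformly bounded number of vertices, exactly as in \cite{AnCh}, using at each stage: (i) the lower Ricci bound to get Bishop--Gromov volume monotonicity and hence to control the number of nearby bubble points, and (ii) the local lemma to get smooth convergence with multiplicity one away from finitely many points on each bubble.

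The manifold $M$ is then recovered, up to diffeomorphism, by gluing the finitely many model regular pieces to the finitely many model bubbles along annular "neck" regions. The gluing data lives in a compact family: on each neck region the metric is $C^{1,\alpha}$-close to a standard model (a metric cone or a cylinder over a cross-section that is itself a Ricci-limit of bounded geometry), so the neck is diffeomorphic to $(\text{cross-section})\times[0,1]$, and the finitely many ways of attaching these pieces gives finitely many diffeomorphism types. One must check that the Gromov--Hausdorff limits, though possibly not manifolds, have tangent cones at the bad points that are metric cones with smooth cross-sections away from lower-dimensional strata — this is where the noncollapsing from $\Vol(M)\ge\rv$, $\diam(M)\le D$ enters, via a uniform lower volume bound on all balls (a consequence of Bishop--Gromov from the lower Ricci bound plus the global volume/diameter bounds, possibly after passing to the universal-cover argument or Colding's volume continuity).

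\textbf{Main obstacle.} The hard part is that without the upper Ricci bound we lose the elliptic regularity that in \cite{AnCh} made the necks and bubbles honestly $C^{1,\alpha}$; so the neck regions a priori need not be topologically standard. The crux is therefore a \emph{neck structure theorem}: one must show that on an annulus where the geometry is volume-pinched to be cone-like and the $L^{n/2}$ energy is small, the lower Ricci bound alone (via almost-volume-cone $\Rightarrow$ almost-metric-cone, i.e. Cheeger--Colding) forces the cross-section to have bounded geometry and the annulus to be a smooth product, so that finitely many homeomorphism types of cross-section occur. Controlling the interaction between the $\Ric$-lower-bound singular set and the $L^{n/2}$-energy singular set — in particular showing the energy cannot leak into a lower-Ricci-singular region in a way that destroys the bubble count — is the technical heart, and is presumably where the bulk of the paper's work goes.
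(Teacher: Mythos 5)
Your overall architecture (decompose into regular/body pieces and neck/bubble regions, prove finiteness for each class of piece, then count gluings) matches the paper's. But there are two genuine gaps, and they sit exactly at the points you lean on hardest.

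First, your ``key local lemma'' --- a harmonic radius (or $C^{1,\alpha}$-harmonic radius) lower bound from $\Ric\ge -(n-1)$, almost-Euclidean volume, and small $\int|\Rm|^{n/2}$ --- is not available, and the references you invoke (Cheeger--Naber codimension $4$, Jiang--Naber) all assume \emph{two-sided} Ricci bounds. The elliptic bootstrap in harmonic coordinates needs Ricci controlled from above in a supercritical norm; $L^{n/2}$ is critical, and a lower bound alone gives nothing. This is precisely the obstruction the paper is built around: it replaces harmonic radius by the \emph{Reifenberg radius} (Definition~\ref{d:DefReifenbergradius}), i.e.\ Gromov--Hausdorff closeness to $\dR^n$ at all scales, which does follow from Cheeger's $\epsilon$-regularity \cite[Theorem 4.32]{Cheeger} plus Colding's volume convergence, and then gets diffeomorphism finiteness of the body regions from Cheeger--Colding's intrinsic Reifenberg theorem (Lemma~\ref{l:finite_homeo}, Theorem~\ref{t:finite_top_bodyregion}) rather than from Cheeger--Gromov compactness. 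You tacitly acknowledge this in your ``main obstacle'' paragraph --- which contradicts your earlier claim that the lemma ``is available'' --- but you do not repair it. Second, the neck structure theorem, which you correctly identify as the crux, is left entirely open in your proposal. The paper's resolution is concrete: by \cite[Theorem 10.2 b)]{Cheeger}, the bounded $L^{n/2}$ energy forces every tangent cone of the limit to be a cone over a space form $\mathbb{S}^{n-1}/\Gamma$ with $|\Gamma|\le C(n,\rv)$ (Lemma~\ref{l:cross_section}); a volume-pinched annulus is therefore GH-close to an annulus in $C(\mathbb{S}^{n-1}/\Gamma)$, and the Cheeger--Colding construction upgrades this to a diffeomorphism with the model neck (Proposition~\ref{p:neck_diff}). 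Without identifying the cross-sections as space forms with bounded $|\Gamma|$, you have no finiteness for the necks and no control of the attaching maps (the paper uses that the attaching maps are close to isometries of $\mathbb{S}^{n-1}/\Gamma$, hence fall into finitely many isotopy classes). A smaller structural remark: the paper's decomposition (Theorem~\ref{t:decomposition}) is carried out directly on the manifold via singular scales of the monotone volume ratio, Cheeger--Naber style, rather than by passing to rescaled bubble limits; your blow-up formulation would additionally require the multiplicity-one smooth convergence you assert, which again presupposes regularity you do not have.
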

Under the assumption that the $L^{n/2}$ curvature tensor is small depending $n,v, D$ instead of boundedness in above the finitely many homeomorphism types was obtained in \cite{Jin18}. Our result greatly improves the result in \cite{Jin18}, also generalizes the finiteness result in \cite{Qian23}, see Remark~\ref{r:int}. 

As we mentioned earlier  without the integral bound on the curvature tensor,  this is true \cite{Zhu93}  when $n=3$,  but it is false when $n \ge 4$ \cite{Perelman97}. 

Note that the integral curvature $\int_{M}|\Rm|^{n/2}$ is scale invariant, and the power $n/2$ is the critical case. With $L^p$ and $p>n/2$, one can obtains the finiteness  for the class by Ricci flow smoothing, see e.g. \cite{DPW00}. For $p < n/2$ the result is not true anymore. 

\begin{remark}  \label{r:int}
With the extension of the Cheeger-Colding theory for manifolds with Ricci curvature lower bound to Ricc lower bound in $L^p$ sense for $p > n/2$ \cite{PW01, TZ}, Theorem~\ref{t:maintheorem} is true if we replace the Ricci lower bound by,  with $p>n/2$, the normalized $L^p$ smallness of Ricci curvature below $-(n-1)$, or by boundedness of $L^p$ of negative part of Ricc curvature and replace global volume lower bound by all scale volume noncollapsing, hence extending the finiteness result in \cite{Qian23}. 
\end{remark}

In the four dimensional K\"ahler case, we can replace $L^2$ bound on the curvature tensor by $L^2$ bound on the scalar curvature. 

\begin{theorem}\label{t:Kahler_diffeomorphism}
Let $(M^4,g)$ be compact K\"ahler manifolds with $\Ric\ge -3$, $\Vol(M)\ge \rv>0$, $\diam(M)\le D$ and $\int_{M}|R|^2\le \Lambda$ (Here $R$ is the scalar curvature). Then $(M^4,g)$ have at most $C(\rv,D,\Lambda)$ many diffeomorphism types.
\end{theorem}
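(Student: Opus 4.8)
The plan is to deduce Theorem~\ref{t:Kahler_diffeomorphism} from Theorem~\ref{t:maintheorem} in dimension $n=4$, by upgrading the $L^2$ bound on the scalar curvature to an $L^2$ bound on the full curvature tensor. Concretely, I would show that a compact K\"ahler surface with $\Ric\ge -3$, $\Vol(M)\ge\rv$, $\diam(M)\le D$ and $\int_M R^2\le\Lambda$ satisfies $\int_M|\Rm|^2\le\Lambda'(\rv,D,\Lambda)$, so that $(M^4,g)\in\cM(4,\rv,D,\Lambda')$ and the conclusion follows. In real dimension four there is the pointwise orthogonal splitting $|\Rm|^2=|W^+|^2+|W^-|^2+2|\mathring{\Ric}|^2+\tfrac16 R^2$, so it suffices to bound $\int_M|W^+|^2$, $\int_M|W^-|^2$ and $\int_M|\mathring{\Ric}|^2$ separately.

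Two of these three pieces are elementary. The lower bound $\Ric\ge -3$ forces $R\ge -12$ and makes the negative part of $\Ric$ pointwise bounded, while the positive part satisfies $|\Ric_+|\le\tr\Ric_+=R+\tr\Ric_-\le|R|+12$; combined with the Bishop--Gromov volume upper bound $\Vol(M)\le\bar V(D)$ this gives $\int_M|\mathring{\Ric}|^2\le\int_M|\Ric|^2\le C(D,\Lambda)$. For the self-dual Weyl tensor I would invoke the classical fact that on a K\"ahler surface $W^+$ is algebraically determined by $R$ --- its eigenvalues on $\Lambda^+$ are $\tfrac R6,-\tfrac R{12},-\tfrac R{12}$ --- so $|W^+|^2=\tfrac1{24}R^2$ and $\int_M|W^+|^2\le\tfrac1{24}\Lambda$. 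Inserting these into the four-dimensional Chern--Gauss--Bonnet and signature formulas, and using again $|W^+|^2=\tfrac1{24}R^2$, one obtains
\[
c_1^2[M]=2\chi(M)+3\tau(M)=\frac1{8\pi^2}\int_M\Big(\tfrac14 R^2-|\mathring{\Ric}|^2\Big),
\]
so the Chern number $c_1^2[M]$ is already bounded in absolute value by $C(\rv,D,\Lambda)$.

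The crux --- and the step I expect to be the main obstacle --- is to bound $\int_M|W^-|^2$. Since $W^-$ is not pointwise controlled by $R$ and $\Ric$, the signature formula $\int_M|W^-|^2=\tfrac1{24}\int_M R^2-12\pi^2\tau(M)$ shows this is equivalent to an a priori bound on the topological invariant $\tau(M)$ (equivalently $\chi(M)$, equivalently $b_2(M)$). I would attack this through the bubble-tree description underlying Theorem~\ref{t:maintheorem}: along a blow-up sequence for a family in this class, the bubbles are complete K\"ahler ALE four-manifolds with $\Ric\ge 0$ and finite energy, hence --- by the structure theory of K\"ahler ALE spaces with nonnegative Ricci curvature --- Ricci-flat, i.e. hyperk\"ahler ALE of Kronheimer type, so $W^+\equiv 0$ on every bubble and the bubble's contribution to $\int|W^-|^2$ is a topological quantity attached to the resolved du Val singularity. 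The remaining work is to bound the number of bubbles and match topologies: one runs an ALE Gauss--Bonnet/signature accounting on the (controlled) regular region together with the bubbles, and combines it with the global inputs $\Vol\ge\rv$, $\diam\le D$, the scalar $L^2$ bound, and the K\"ahler relation above pinning $c_1^2[M]$, to conclude that $\chi(M)$ differs from the Euler characteristic of the limit space by a bounded defect, giving $\int_M|W^-|^2\le C(\rv,D,\Lambda)$. A complementary, more algebro-geometric route would be to feed the bound on $c_1^2[M]$ into the Enriques--Kodaira classification: Noether's inequality converts it into a bound on $c_2=\chi$ for minimal surfaces of general type, the non-minimal case is handled by bounding the number of $(-1)$-curves via volume comparison, and the Kodaira dimension $\le 1$ cases are excluded or rigidified using that a lower Ricci bound with bounded diameter and non-collapsing forbids high-genus base curves and all but finitely many configurations of singular elliptic fibers. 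Either way, once $\int_M|W^-|^2$ is bounded, summing the four terms bounds $\int_M|\Rm|^2$ and Theorem~\ref{t:maintheorem} finishes the proof.
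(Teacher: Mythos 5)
Your high-level strategy coincides with the paper's: reduce to Theorem~\ref{t:maintheorem} by showing $\int_M|\Rm|^2\le C(\rv,D,\Lambda)$, and this is precisely what the paper does --- Theorem~\ref{t:Kahler_homeomorphism} records the bound $\int_M|\Rm|^2\le C(\rv,D,\Lambda)$ as a consequence of finite homeomorphism types, and Theorem~\ref{t:Kahler_diffeomorphism} then follows from Theorem~\ref{t:maintheorem}. Your Chern--Weil bookkeeping is also correct: the four-dimensional orthogonal splitting of $\Rm$, the identity $|W^+|^2=R^2/24$ on K\"ahler surfaces, the elementary pointwise bound on $\Ric$ from $\Ric\ge-3$ and $|R|$, and the signature/Gauss--Bonnet identities. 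You correctly locate the crux as a uniform bound on $\tau(M)$ (equivalently, $\int_M|W^-|^2$).

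The gap is that neither of your proposed routes to bounding $\tau(M)$ is actually carried out, and the one closer to the truth quietly presupposes the machinery that is the real content of the paper's Section~4. Your ``bubble-tree'' route requires knowing that blow-up limits are smooth K\"ahler ALE spaces and that the neck/body decomposition has controlled combinatorics and controlled topology on every piece. Under the hypotheses here --- only $\Ric\ge-3$ K\"ahler with $L^2$-\emph{scalar} bound and no $L^2$-$\Rm$ bound --- none of this is available off the shelf from Theorem~\ref{t:maintheorem}: the paper has to build a new monotone quantity $\cVR_r=\cV_r+\int_{B_r}|R|^2$, prove a K\"ahler $\epsilon$-regularity (Lemma~\ref{l:eps_regular_Kahler}, Proposition~\ref{p:eps_Kahler}) to get Reifenberg control on annuli and cross sections from $L^2$-small Ricci (not $L^2$-small $\Rm$), establish that the cross sections $Z_s$ are smooth 3-manifolds with uniform Reifenberg radius and finitely many diffeomorphism types (Lemma~\ref{l:cross_section_Kahler}; unlike the non-K\"ahler case they are \emph{not} known to be space forms $\dS^3/\Gamma$, so the ``du Val resolution'' picture you invoke is not a priori applicable), and then glue local diffeomorphisms along the neck via the Siebenmann--Kapovitch deformation lemma (Lemma~\ref{l:deformation}, Proposition~\ref{p:neck_homeo}) since the neck is no longer a single smooth annulus $A\subset C(\dS^3/\Gamma)$. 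Once all that is in place, one gets the full homeomorphism finiteness and hence the $\tau$-bound for free; so this route does not bypass Theorem~\ref{t:Kahler_homeomorphism}, it re-proves it. Your alternative algebro-geometric route is even less complete: bounding $(-1)$-curves ``by volume comparison'' is not automatic (their $\omega$-area is a cohomological quantity that is not obviously bounded below), and the claim that Kodaira dimension $\le 1$ cases are ``excluded or rigidified'' by the geometric hypotheses is asserted, not proved. In short, the proposal correctly sets up the reduction and identifies the key quantity to bound, but the central estimate $|\tau(M)|\le C(\rv,D,\Lambda)$ is not established; it is exactly here that the paper's new K\"ahler $\epsilon$-regularity, modified monotone quantity, and neck-gluing via deformation are needed.
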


Here with the pointwise Ricci curvature lower bound, an $L^2$ bound on the scalar curvature is equivalent to an $L^2$ bound on the Ricci curvature. In Theorem \ref{t:Kahler_diffeomorphism}, without the K\"ahler condition, if we assume $L^p$-Ricci with $p>2$, then based on Cheeger-Naber's argument in \cite{CheegerNaber_codimension4} one can prove the same finitely many diffeomorphism types theorem \cite{Jiang}. 

In proving the finiteness in \cite{AnCh}, one of the key tools is the $\epsilon$-regularity result that for manifolds in \eqref{Ricci-bothsides}, when the $L^{n/2}$-curvature tensor of a ball is sufficiently small, then the harmonic radius of the ball is uniformly positive.  Similarly this is done in \cite{Qian23}. This uses two sided Ricci curvature bound crucially and can not be true with only Ricci curvature lower bound. Instead we use Reifenberg radius (see Definition~\ref{d:DefReifenbergradius}) to control the topology. Another important result in \cite{AnCh} is their neck theorem which provides control over the geometry and
topology of the transition regions.

To prove Theorem~\ref{t:maintheorem}, our start point is
 \cite[Theorem 10.2]{Cheeger}, which gives that the tangent cone of any limit of $M_i^n \in \mathcal{M}(n,\rv,D,\Lambda)$ is unique and is the cone over some space form $\mathbb{S}^{n-1}/\Gamma$ with $|\Gamma|\le C(n,\rv)$. This provide the Reifenberg radius lower bound, see Lemma~\ref{l:cross_section}. 

Under the condition of Theorem \ref{t:Kahler_diffeomorphism}, we do not know whether every tangent cone of the limit space of such sequence is unique and flat. Instead, we prove that every tangent cone is a cone with  smooth cross section and all cross sections have the same topology with uniform lower bound for the Reifenberg radius (See Lemma \ref{l:cross_section_Kahler}). To see this, we introduce a new monotone quantity, \begin{align}
      \label{e:VR montone}
\cVR_r(x):=\cV_r(x)+\int_{B_r(x)}|R|^{2}, ~~x\in M, r>0
\end{align}
where $\cV_r(x)$ is the volume ratio \eqref{e:volume ratio}.
With additional deformation result we are able to glue local homeomorphisms to get a global homeomorphism on the neck region. To control the cross sections we derive an $\epsilon$-regularity for 4-dim K\"ahler manifolds when $L^2$ Ricci curvature is small (see Proposition~\ref{p:eps_Kahler}).  By Cheeger-Colding, we know that without any  curvature integral condition the cross section may not be smooth. Recently, it was proved in \cite{BPS} that the cross section of every tangent cone is homeomorphic to $\mathbb{S}^{3}/\Gamma$ without the curvature integral condition, which solves a conjecture of Colding-Naber.  

For both Theorems~\ref{t:maintheorem} \ref{t:Kahler_diffeomorphism} we construct a decomposition as in \cite{CheegerNaber_codimension4}. For Theorems~\ref{t:maintheorem} we use the monotonic quantity volume ratio \eqref{e:volume ratio}, while for Theorems~\ref{t:Kahler_diffeomorphism} we use the modified monotonic quantity \eqref{e:VR montone}. 

From Theorems~\ref{t:Kahler_diffeomorphism}, it is natural to  ask if it is true without K\"ahler condition. Namely

\begin{question}
For given $D,\Lambda, V$, does there exist $C(D,\Lambda,V)>0$ such that the space $\mathcal{M}(D,\Lambda,V):=\{(M^4,g) \text{ closed}: \text{$\diam(M)\le D, \Vol(M)\ge V, \Ric\ge -3,$ and $\int_{M}|\Ric|^2\le \Lambda$ } \}$  have at most $C(D,\Lambda,V)$ many diffeomorphism type? 
\end{question}
From our proof, one only needs to prove that every tangent cone of the limit space of any sequence of $\mathcal{M}(D,\Lambda,V)$ is a cone over smooth cross section with uniform lower bound for the Reifenberg radius. Once this is true, by Chern-Gauss-Bonnet theorem, the manifold has bounded $L^2$-curvature. 

Acknowledgement: The authors would like to thank Jeff Cheeger for his interest, encouragement and helpful comments. 



\section{Preliminary}
In this section, we will recall some important results from Cheeger-Colding theory \cite{ChC1}.

\subsection{Almost metric cone and almost splitting}
The following two results of Cheeger-Colding play crucial roles in the study of manifolds with  Ricci curvature lower bound. We will use this several times in our proof.

For noncollapsed manifold, almost metric cone structure is very important, which makes a great difference between collapsed and noncollapsed manifolds. Let us start from the following Cheeger-Colding's almost volume cone implies almost metric cone theorem.
\begin{theorem} \cite{ChC1}, see also \cite[Theorem 9.45, Remark 9.66]{Cheeger-note} 
\label{t:Almostmetriccone}
For any $n,\epsilon>0$ there exists $\delta=\delta(n,\epsilon)>0$ such that the following holds: Let $(M^n,g,p)$ be a complete manifold with $\Ric\ge -\delta(n-1)$. If the volume ratio $\cV_r(x):=- \ln \frac{\Vol(B_r(x))}{\Vol_{-\delta}(B_r)}$ satisfies 
\begin{align}
|\cV_2(p)-\cV_1(p)|\le \delta,
\end{align}
then 
\begin{align}
d_{GH}(B_{1}(p),B_{1}(x_c))\le \epsilon,
\end{align}
where $B_{1}(x_c)$ is the ball of a metric cone $(C(X),x_c)$ with cone vertex $x_c$, and $\Vol_{-\delta}(B_r)$ is the $r$-ball volume in the model space $\mathbb M^n_{-\delta}$ with constant sectional curvature $-\delta$.  
\end{theorem}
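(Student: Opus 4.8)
The plan is to argue by contradiction and compactness, reducing the claim to a rigidity statement whose proof rests on an integral Hessian estimate for the squared distance function obtained from Bochner's formula; the estimate is quantitative and scale invariant, so the contradiction setup is only a convenient way to organise the error bookkeeping.

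Suppose the conclusion fails for some fixed $\epsilon_0>0$. Then there is a sequence of complete pointed manifolds $(M_i^n,g_i,p_i)$ with $\Ric_{g_i}\ge-\tfrac1i(n-1)$ and $|\cV_2(p_i)-\cV_1(p_i)|\le\tfrac1i$ (the volume ratio $\cV$ being computed against the model $\mathbb M^n_{-1/i}$), but with $d_{GH}(B_1(p_i),B_1(x_c))>\epsilon_0$ for every pointed metric cone $(C(X),x_c)$. After passing to a subsequence, $(M_i,g_i,p_i)\to(Y,d_Y,p_\infty)$ in the pointed Gromov--Hausdorff topology, and the renormalised volume measures $\Vol_{g_i}/\Vol_{g_i}(B_1(p_i))$ converge weakly to a Radon measure $\nu$, normalised so that $\nu(B_1(p_\infty))=\omega_n$. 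Since the model curvature $-\tfrac1i\to0$, Bishop--Gromov passes to the limit: $r\mapsto\nu(B_r(p_\infty))/r^n$ is non-increasing, and $|\cV_2(p_i)-\cV_1(p_i)|\to0$ forces it to be \emph{constant} on $[1,2]$. So it suffices to show that this exact volume-cone behaviour compels $B_1(p_\infty)$ to be isometric to a ball in a metric cone with vertex $p_\infty$.

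The heart of the argument is the estimate, on each $M_i$ with $f_i:=\tfrac12\,d(p_i,\cdot)^2$,
\[
\frac1{\Vol_{g_i}(B_1(p_i))}\int_{B_1(p_i)}\bigl|\Hess f_i-g_i\bigr|^2\,d\Vol_{g_i}\ \le\ \Psi(1/i\mid n)\ \xrightarrow[\,i\to\infty\,]{}\ 0 .
\]
One feeds three inputs into Bochner's identity $\tfrac12\Delta|\nabla f_i|^2=|\Hess f_i|^2+\langle\nabla f_i,\nabla\Delta f_i\rangle+\Ric(\nabla f_i,\nabla f_i)$: (i) $|\nabla f_i|^2=d(p_i,\cdot)^2$ almost everywhere, since $d(p_i,\cdot)$ is a distance function; (ii) the Laplacian comparison $\Delta f_i\le n+\Psi(1/i\mid n)$ on $B_2(p_i)$, valid in the barrier sense with a measure part of the correct non-positive sign along the cut locus; (iii) the volume defect $\cV_2(p_i)-\cV_1(p_i)$ controls the integrated deviation $\tfrac1{\Vol}\int|\Delta f_i-n|$ over a concentric ball of definite radius, via the coarea formula relating the average of $\Delta f_i$ to $\Vol(\partial B_r)/\Vol(B_r)$ and hence to $\tfrac{d}{dr}\cV_r$. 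Integrating Bochner against a radial cutoff, using $(\Delta f_i)^2\le(n+\Psi)\Delta f_i$, absorbing the Ricci term via $\Ric\ge-\tfrac1i(n-1)$ and $|\nabla f_i|\le2$, and invoking the pointwise identity $|\Hess f_i-g_i|^2=|\Hess f_i|^2-2\Delta f_i+n$ then produces the displayed bound (after a harmless bounded rescaling if one needs the conclusion exactly on $B_1$).

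Finally one converts the $L^2$ Hessian bound into Gromov--Hausdorff control. In the rigid case $\Hess\bigl(\tfrac12 d(p,\cdot)^2\bigr)=g$ on a ball forces that ball to be a metric cone with vertex $p$: along unit-speed geodesics issuing from $p$ the equation becomes an ODE pinning the metric into the warped form $dr^2+r^2g_X$ on a level set $X$ that is independent of the radius. In the almost-rigid case the $L^2$ smallness of $\Hess f_i-g_i$ is promoted, through Cheeger--Colding's segment inequality (which turns an $L^2$ bound on a non-negative quantity into control along most minimising geodesics), into a $\Psi(1/i\mid n)$ Gromov--Hausdorff approximation of $B_1(p_i)$ by a cone ball $B_1(C(X_i))$; equivalently, in the limit the estimate reads $\Hess\bigl(\tfrac12 d(p_\infty,\cdot)^2\bigr)=g$ weakly on $B_1(p_\infty)$ and the rigidity embedded in the structure theory of Ricci limit spaces identifies $B_1(p_\infty)$ with a cone ball. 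Either way this contradicts $d_{GH}(B_1(p_i),B_1(x_c))>\epsilon_0$. I expect the main obstacle to be exactly this integral Hessian estimate --- making the Bochner computation rigorous across the cut locus of $p_i$ (where $f_i$ is only Lipschitz and $\Delta f_i$ is a signed measure) and tracking the volume-defect error terms faithfully --- together with the downstream passage from an $L^2$ Hessian bound to a genuine Gromov--Hausdorff approximation, which is where the segment inequality enters and where the geometric, as opposed to purely analytic, content of the theorem resides.
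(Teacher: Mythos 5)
The paper does not prove Theorem~\ref{t:Almostmetriccone}: it is quoted from Cheeger--Colding \cite{ChC1} (with \cite{Cheeger-note} given as a reference), so there is no in-paper argument to compare against. Your sketch accurately reproduces the Cheeger--Colding strategy from those references --- the contradiction-and-compactness framing, the scale-invariant $L^2$ Hessian estimate for $f=\tfrac12\,d(p,\cdot)^2$ obtained by integrating Bochner against a cutoff, the use of Laplacian comparison together with the volume defect to control $\Delta f - n$ in an averaged sense, and the passage from the $L^2$ Hessian bound to a Gromov--Hausdorff cone approximation via the segment inequality. One imprecision worth flagging: the inequality $(\Delta f_i)^2\le (n+\Psi)\,\Delta f_i$ requires $\Delta f_i\ge 0$, which is not automatic since the singular part of $\Delta d$ along the cut locus is negative; the rigorous argument instead exploits that this singular part of $\Delta d$ as a measure is non-positive, so that after integration by parts the cut-locus contribution has favourable sign and one never needs a pointwise lower bound on $\Delta f$. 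You correctly identify the cut-locus treatment and the $L^2$-to-$C^0$ passage as the places where the technical weight lies, which matches the referenced sources.
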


Let us remark that  there is no volume lower bound assumption for Theorem \ref{t:Almostmetriccone}(see \cite{ChTi05} for a nice application without volume assumption). If we further assume the volume lower bound, by Theorem \ref{t:Almostmetriccone} we have the following
\begin{lemma}\label{l:cone_structure}
Let $(M^n,g,p)$ be complete manifold satisfying $\Ric\ge -(n-1)$ and $\Vol(B_1(p))\ge \rv>0$. Then for any $\delta>0, $ there exists $0<\kappa=\kappa(n,\rv,\delta)\le 1$ such that for any $r\le 1$ and $x\in B_1(p)$, there exists $\kappa r\le r_x\le r$ such that 
\begin{align}\label{e:finitepinchvolumeratio}
|\cV_{r_x/100}(x)-\cV_{r_x}(x)|\le \delta.
\end{align}
and 
\begin{align}
d_{GH}(B_{r_x}(x),B_{r_x}(x_c))\le  \delta r_x,~~~\text{ for a metric cone $(C(X),x_c))$.}
\end{align}
\end{lemma}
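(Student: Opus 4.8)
The plan is to find, by a pigeonhole argument over a geometric sequence of scales, a radius $r_x\in[\kappa r,r]$ at which the (Bishop--Gromov monotone) volume ratio $\cV$ is almost constant over a whole range of radii, and then to feed this into Cheeger--Colding's ``almost volume cone implies almost metric cone'' theorem, Theorem~\ref{t:Almostmetriccone}. The crucial first observation is that the global volume hypothesis makes $\cV$ uniformly bounded on $B_1(p)$: for $x\in B_1(p)$ one has $B_1(p)\subseteq B_2(x)$, hence $\Vol(B_2(x))\ge\rv$, and Bishop--Gromov comparison shows that $s\mapsto \Vol(B_s(x))/\Vol_{-1}(B_s)$ is nonincreasing on $(0,2]$ with values in $[c(n,\rv),1]$, where $c(n,\rv):=\rv/\Vol_{-1}(B_2)>0$. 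Consequently $\cV_s(x)$ is nondecreasing in $s$ and $0\le\cV_s(x)\le\bar V(n,\rv):=-\ln c(n,\rv)$ for $0<s\le 2$.

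Let $\delta_{CC}=\delta_{CC}(n,\delta)>0$ be the constant that Theorem~\ref{t:Almostmetriccone} produces for $\epsilon=\delta$, and fix $\delta_1\in(0,\min\{\delta,\delta_{CC}\}]$ and a threshold $\tau\in(0,1]$, both depending only on $n,\delta$, to be pinned down at the end. Given $x\in B_1(p)$ and $0<r\le 1$, apply the pigeonhole to the scales $s_k:=200^{-k}\min\{r,\tau\}$ for $k=0,1,\dots,N$, where $N=N(n,\rv,\delta)$ is the least integer with $N\delta_1>\bar V(n,\rv)$. Since the increments $\cV_{s_k}(x)-\cV_{s_{k+1}}(x)\ge 0$ telescope to at most $\bar V(n,\rv)<N\delta_1$, some index $k<N$ satisfies $\cV_{s_k}(x)-\cV_{s_{k+1}}(x)\le\delta_1$, i.e. $\cV$ is $\delta_1$-pinched over the whole interval $[s_k/200,\,s_k]$. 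Set $r_x:=s_k/2$, so that both $[r_x/100,r_x]$ and $[r_x,2r_x]$ are contained in $[s_k/200,s_k]$. The first inclusion gives $|\cV_{r_x/100}(x)-\cV_{r_x}(x)|\le\delta_1\le\delta$, which is \eqref{e:finitepinchvolumeratio}; moreover $\kappa r\le r_x\le r$ with $\kappa:=200^{-N}\tau\in(0,1]$ depending only on $n,\rv,\delta$ (using $r\le 1$ in the case $\min\{r,\tau\}=\tau$), and $r_x\le\tau$.

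It remains to upgrade the volume pinching to metric-cone closeness at scale $r_x$. Here I would rescale $B_{r_x}(x)$ to unit size and apply Theorem~\ref{t:Almostmetriccone} with $\epsilon=\delta$: since $r_x\le\tau$ with $\tau$ chosen small, the rescaled manifold meets the Ricci and comparison-geometry hypotheses of that theorem, and the $\delta_1$-pinching of $\cV$ over $[r_x,2r_x]$ established above — which becomes pinching between the unit ball and the double ball after rescaling — supplies its volume hypothesis, modulo the discrepancy between the $\Vol_{-1}$ normalization and the comparison volumes appearing in the theorem, which at the rescaled scale is controlled by $\tau$ and is absorbed by the choice of $\delta_1,\tau$. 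The conclusion is precisely $d_{GH}(B_{r_x}(x),B_{r_x}(x_c))\le\delta\,r_x$ for a metric cone $(C(X),x_c)$, as desired.

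The conceptual content is entirely in the first paragraph: because the global volume bound forces the monotone volume ratio to be uniformly bounded, the pigeonhole must produce a $\delta_1$-pinched scale within a number of steps $N$ depending only on $n,\rv,\delta$, which is exactly what makes $\kappa$ depend only on these parameters. I expect the only genuinely fussy point to be the rescaling in the last step — verifying that $r_x\le\tau$ places the rescaled ball in the regime of Theorem~\ref{t:Almostmetriccone} and that the change of comparison model there is harmless at that scale — which is routine but requires care. The factor $200$ (rather than a bare factor $100$) in the pigeonhole is chosen only so that the single pinched interval of radii is long enough to serve simultaneously for \eqref{e:finitepinchvolumeratio} and for the application of Theorem~\ref{t:Almostmetriccone}.
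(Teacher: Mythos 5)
Your proposal is essentially the same as the paper's proof: a pigeonhole argument over a geometric sequence of scales, using the fact that the global volume lower bound bounds the monotone quantity $\cV$ uniformly on $B_1(p)$, followed by an application of Theorem~\ref{t:Almostmetriccone}. The paper runs the pigeonhole on scales $r_i=100^{-i}r$ with $L=\lfloor C(n,\rv)/\delta\rfloor+1$ steps and then simply invokes Theorem~\ref{t:Almostmetriccone}; you add two refinements that the paper leaves implicit — capping the starting scale by a threshold $\tau$ so that the rescaled Ricci lower bound fits the hypothesis of Theorem~\ref{t:Almostmetriccone}, and using a factor $200$ so the pinched interval covers both $[r_x/100,r_x]$ and $[r_x,2r_x]$ — which are reasonable and correct bookkeeping choices, not a different route.
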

Here and in the rest of the paper we denote \begin{align}
    \cV_r(x):=- \ln \frac{\Vol(B_r(x))}{\Vol_{-1}(B_r)}, \label{e:volume ratio}
\end{align}
the volume ratio comparing to the model space $\mathbb H^n$. Clearly $\cV_0(x) =0$ and it is monotonically increasing in $r$. 
\begin{proof}
By Theorem \ref{t:Almostmetriccone}  it suffices to prove  (\ref{e:finitepinchvolumeratio}). 
For any $0<r<1$ and $x\in B_1(p)$, let $r_i=r100^{-i}$. Denote $\cF_i:=|\cV_{r_i}(x)-\cV_{r_{i+1}}(x)|$. Note that $|\cV_1(x)|\le C(n,\rv)$.
  If for all $0\le i\le L:=\left \lfloor {C(n,\rv)/\delta} \right \rfloor +1$, we have $\cF_i>\delta$, then 
\begin{align}
C(n,\rv) <  \delta L\le \sum_{i=0}^{L-1}\cF_i=\sum_{i=0}^{L-1}|\cV_{r_i}(x)-\cV_{r_{i+1}}(x)|=|\cV_r(x)-\cV_{r_L}(x)|\le |\cV_r(x)| \le |\cV_1(x)|\le C(n,\rv).
\end{align}
This is a contradiction. Therefore, the lemma follows with $k = 100^{-L+1}$. 
\end{proof}

 The second result is Cheeger-Colding's almost splitting theorem, which is an quantitative version of Cheeger-Gromoll's splitting theorem. 
\begin{theorem}[Almost splitting, \cite{ChC1}]\label{t:Almostsplitting}
For any $n,\epsilon>0$ there exists $\delta=\delta(n,\epsilon)$ such that the following holds: Let $(M^n,g,p)$ be a complete manifold with $\Ric\ge -\delta(n-1)$. If $\gamma: [-\delta^{-1},\delta^{-1}]\to M$ is a minimizing geodesic with $\gamma(0)=p$, then 
\begin{align}
d_{GH}(B_1(p), B_1(0,x))\le \epsilon,
\end{align}
where $\mathbb{R}\times X$ is product space with $(0,x)\in \mathbb{R}\times X$. 
\end{theorem}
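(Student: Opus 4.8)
The plan is to establish this by contradiction and compactness, leveraging the Cheeger--Gromoll splitting theorem as a rigidity model. Suppose the statement fails: then there is some $n$, some $\epsilon_0 > 0$, and a sequence of pointed complete manifolds $(M_i^n, g_i, p_i)$ with $\Ric \ge -\tfrac{1}{i}(n-1)$, admitting minimizing geodesics $\gamma_i : [-i, i] \to M_i$ with $\gamma_i(0) = p_i$, yet $d_{GH}(B_1(p_i), B_1((0,x))) > \epsilon_0$ for every product space $\RR \times X$ and every basepoint $(0,x)$. The first step is to pass to a pointed Gromov--Hausdorff limit: since $\Ric \ge -\tfrac1i(n-1) \to 0$, the Bishop--Gromov inequality gives a uniform doubling bound on all balls $B_R(p_i)$ for fixed $R$, so by Gromov's precompactness theorem a subsequence converges in the pointed GH topology to some complete length space $(X_\infty, d_\infty, p_\infty)$. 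One must be slightly careful here because there is no volume lower bound, so the limit may be collapsed and of lower Hausdorff dimension; but GH precompactness only needs the doubling/covering bound, which Bishop--Gromov supplies regardless of collapse.

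The second step is to extract a line in $X_\infty$. The minimizing geodesics $\gamma_i$ have length $2i \to \infty$; by Arzel\`a--Ascoli (equicontinuity of unit-speed geodesics) a subsequence of the $\gamma_i$ converges uniformly on compact subsets of $\RR$ to a curve $\gamma_\infty : \RR \to X_\infty$ with $\gamma_\infty(0) = p_\infty$. Since GH convergence preserves lengths of limits of minimizing geodesics and preserves distances, $\gamma_\infty$ is a unit-speed minimizing geodesic on all of $\RR$, i.e. a line through $p_\infty$. The third step is the rigidity input: by the Cheeger--Colding structure theory (the limit version of the Cheeger--Gromoll splitting theorem for Ricci limit spaces with nonnegative Ricci bound in the limit), a complete length space that is a noncollapsed-or-collapsed Ricci limit of manifolds with $\Ric \ge -\tfrac1i(n-1)$ and contains a line splits isometrically: $X_\infty \cong \RR \times X$ for some length space $X$, with $p_\infty = (0, x)$ where $x \in X$. (If one prefers to avoid invoking the limit splitting theorem as a black box, it itself is proved by exactly the Busemann-function/almost-splitting argument being bootstrapped here, so for this write-up I would cite it.)

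The final step closes the contradiction. We have $B_1(p_i) \xrightarrow{GH} B_1(p_\infty)$ (GH convergence of pointed spaces restricts to GH convergence of fixed-radius balls around the basepoints, up to the usual $\pm$ issues at the boundary radius, which cause no problem after an arbitrarily small adjustment of the radius), and $B_1(p_\infty) = B_1((0,x)) \subset \RR \times X$. Hence $d_{GH}(B_1(p_i), B_1((0,x))) \to 0$, contradicting $d_{GH}(B_1(p_i), B_1((0,x))) > \epsilon_0$ for all $i$. Therefore no such sequence exists, and for each $\epsilon > 0$ the desired $\delta(n,\epsilon)$ must exist. The main obstacle is the third step: cleanly justifying that the limit space splits off an $\RR$ factor. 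In the compactness-and-contradiction format this reduces to citing the Cheeger--Colding splitting theorem for Ricci limit spaces; a self-contained argument would instead require the effective Busemann-function estimates (controlling the Hessian of $b^{\pm}_i$ in an integral sense, the Laplacian comparison, and a segment/excess inequality) and is precisely the content one would develop directly — I would present the contradiction argument and quote the limit splitting theorem, noting that the quantitative $\delta$--$\epsilon$ form follows formally from it by the standard contradiction scheme.
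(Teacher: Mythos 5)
The paper does not prove this statement at all: it is quoted verbatim as Cheeger--Colding's almost splitting theorem with a citation to \cite{ChC1}, so there is no in-paper argument to compare against. Judged on its own terms, your proposal has a correct skeleton — Bishop--Gromov gives uniform doubling without any volume lower bound, so Gromov precompactness applies even in the collapsed case; limits of the longer and longer minimizing geodesics $\gamma_i$ do produce a line in the limit space; and pointed GH convergence does pass to convergence of (slightly adjusted) unit balls, closing the contradiction once the limit splits.

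The genuine problem is the third step, and you have named it yourself without resolving it. In the Cheeger--Colding development, the splitting theorem for Ricci limit spaces (Theorem 6.64 of their Annals paper) is \emph{deduced from} the quantitative almost splitting theorem you are trying to prove; the logical order is: segment inequality and Abresch--Gromoll excess estimate $\Rightarrow$ quantitative Busemann/harmonic replacement estimates $\Rightarrow$ almost splitting $\Rightarrow$ limit splitting. So citing the limit splitting theorem to prove the almost splitting theorem is circular, not merely a stylistic shortcut. To make your reduction legitimate you need a logically independent proof that a Ricci limit space (possibly collapsed) of manifolds with $\Ric \ge -\delta_i(n-1)$, $\delta_i \to 0$, containing a line splits isometrically. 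This does exist — Gigli's splitting theorem for $RCD(0,N)$ spaces, combined with the stability of the $RCD$ condition under pointed measured Gromov--Hausdorff convergence — but that is a different and much later piece of machinery than what you cite. Absent that substitution, the only non-circular route is the direct quantitative argument of \cite{ChC1}: construct the approximate Busemann functions $b^{\pm}$, control $|\nabla b^{\pm}|$ and the $L^2$-norm of $\Hess\, b^{\pm}$ via the Laplacian comparison and the segment inequality, and conclude with the almost Pythagorean theorem. You should either carry that out or explicitly swap in the $RCD$ splitting theorem as the rigidity input.
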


Roughly, by Theorem \ref{t:Almostsplitting} we see that if a ball $B_2(p)$ is close to a metric cone, then for any $q\in B_2(p)\setminus \bar B_1(p)$, definite size ball with center $q$ would be close to a splitting space. Under volume noncollpased condition, by combining Theorem \ref{t:Almostmetriccone}, one can prove more. {See more discussions and applications in \cite{CJN,CheegerNaber_Ricci,CheegerNaber_codimension4,Jiang_Naber}

\subsection{Reifenberg Radius and finiteness theorem}

Let us define the $\epsilon$-Reifenberg radius for a metric space $(X,d)$.
\begin{definition}\label{d:DefReifenbergradius}
    Let $(X,d)$ be a metric space. 
    For $\epsilon>0$, integer $n>0$ and $x\in X$, define the Reifenberg radius at $x$ by
\begin{align}
r_{Rei,\epsilon,n}(x):=\max\{r\ge 0:~~d_{GH}(B_s(y),B_s(0^n))\le \epsilon s,~~\text{ for all $0\le s\le r$ and $y\in B_r(x)$}\}
\end{align}
and define the Reifenberg radius of $X$ by
 \begin{align}
 r_{Rei,\epsilon,n}(X):=\inf_{x\in X}r_{Rei,\epsilon,n}(x).
 \end{align}
 \end{definition}

 \begin{remark}
     If $X$ is a Riemannian manifold with dimension $n$, we will omit the index $n$ in $r_{Rei, \epsilon, n}$ and write it as $r_{Rei, \epsilon}$.
 \end{remark}
 
Let us recall some results proved by Cheeger-Colding \cite{ChC1} under lower Reifenberg radius bound. 
\begin{lemma}\label{l:finite_homeo}[Theorem A.1.4 in \cite{ChC1}]
Given any $r_0>0$, let $\mathcal{M}(n, \epsilon, r_0)$ be the isometry classes of compact metric spaces $(X,d)$ with $\epsilon$-Reifenberg radius $r_{Rei,\epsilon,n}(X)>r_0\,\diam(X,d)>0$. There exists $\epsilon(n)>0$ such that if the $\epsilon < \epsilon(n)$, then $\mathcal{M}(n, \epsilon, r_0)$ has at most $C(n,r_0)$ many homeomorphism types. Moreover, if $X$ is Riemannian, then it has at most $C(n,r_0)$ many diffeomorphism types. 
\end{lemma}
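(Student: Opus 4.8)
\textbf{Proof proposal for Lemma~\ref{l:finite_homeo} (Theorem A.1.4 of \cite{ChC1}).}

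The plan is to show that the $\epsilon$-Reifenberg condition, at a suitably small scale $\epsilon(n)$, forces each space $(X,d)\in\mathcal M(n,\epsilon,r_0)$ to be bi-Lipschitz to a topological (indeed, smooth) $n$-manifold with controlled complexity, and then to run a Gromov-compactness/precompactness argument to conclude that only finitely many homeomorphism (resp.\ diffeomorphism) types occur. First I would recall the local structure: whenever a ball $B_s(y)\subset X$ is $\epsilon s$-close in Gromov--Hausdorff distance to the Euclidean ball $B_s(0^n)$ for \emph{all} scales $0\le s\le r$, one obtains a Reifenberg-type regularity theorem (this is exactly the content of Cheeger--Colding's Appendix in \cite{ChC1}, building on the classical Reifenberg topological disc theorem): there is a constant $\epsilon(n)>0$ so that if $\epsilon<\epsilon(n)$ then on the ball $B_{r_0\diam(X)/2}(x)$ one can construct, by the center-of-mass/averaging-of-approximations procedure, a bi-Hölder (in fact, with the refinements in \cite{ChC1}, bi-Lipschitz on slightly smaller balls) homeomorphism onto a Euclidean ball, with Hölder/Lipschitz constants depending only on $n$ and $\epsilon$. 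In particular $X$ is a topological $n$-manifold, and if $X$ is additionally a Riemannian manifold the harmonic-radius estimates available under Reifenberg control (again from \cite{ChC1}) promote this to a diffeomorphism with $C^{1,\alpha}$-controlled charts.

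Next I would quantify the covering complexity. By the Reifenberg condition the volume ratios $\Vol(B_s(y))/\omega_n s^n$ are pinched near $1$ at all scales below $r:=r_{Rei,\epsilon,n}(X)>r_0\diam(X)$, so $X$ is Ahlfors $n$-regular with constants depending only on $n,\epsilon$; together with $\diam(X)\le D$ (normalize $\diam(X)=1$ after rescaling, using scale invariance of the hypothesis) this bounds the number of $r_0/10$-balls needed to cover $X$ by a constant $N=N(n,r_0)$. Thus $X$ is covered by $N$ charts, each bi-Lipschitz (bi-Hölder) to a Euclidean ball with uniform constants, and each nonempty pairwise intersection is again of controlled geometry. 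One then argues that the combinatorial data of this cover --- the nerve, together with the transition maps recorded up to a fixed bi-Lipschitz precision on a fixed finite net --- determines the homeomorphism type: two spaces admitting covers with the same nerve and transition maps that are uniformly close are homeomorphic (one glues the local homeomorphisms using a partition of unity subordinate to the cover and the center-of-mass construction to patch them consistently, exactly as in the original Reifenberg gluing argument). Since there are only finitely many possible nerves on $\le N$ vertices, and the transition maps live in a precompact family (uniformly bi-Lipschitz maps between balls, by Arzelà--Ascoli) which can be discretized into finitely many "types" at the required precision, only $C(n,r_0)$ homeomorphism types arise.

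For the diffeomorphism statement in the Riemannian case, the same scheme applies but now the charts are harmonic coordinate charts with a uniform lower bound $r_h>0$ on the harmonic radius (depending on $n,r_0$), in which the metric tensor is controlled in $C^{1,\alpha}$. The transition functions then lie in a precompact subset of $C^{1,\alpha}$-diffeomorphisms, and a center-of-mass smoothing of the glued homeomorphism --- standard once one has $C^{1,\alpha}$-charts, cf.\ the finiteness arguments in the spirit of \cite{Cheeger_finiteness} --- upgrades it to a diffeomorphism; again finiteness of nerves plus precompactness of transition data yields $\le C(n,r_0)$ diffeomorphism types. The main obstacle, and the technical heart of the argument, is the Reifenberg regularity step itself: producing the bi-Lipschitz (not merely bi-Hölder) charts and the controlled harmonic radius purely from the \emph{all-scales} Gromov--Hausdorff closeness to $\RR^n$, with no smoothness assumed a priori on $X$ --- this is the delicate iteration of approximating maps across dyadic scales, and the gluing consistency of the local charts is what makes the passage from "each ball is nice" to "the whole space has one of finitely many types" work. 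Everything downstream (covering bounds, nerve finiteness, Arzelà--Ascoli precompactness of transition maps) is then soft.
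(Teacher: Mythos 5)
The paper does not prove this lemma at all: it is quoted verbatim as Theorem A.1.4 of \cite{ChC1}, and the closest the paper comes to a proof is the argument for the analogous Theorem~\ref{t:finite_top_bodyregion}, which runs entirely through Gromov precompactness of the class in the Gromov--Hausdorff topology plus the stability statement (Proposition~\ref{p:homeo_XY} / Lemma~\ref{l:finite_topology_bodyregion}): one covers the compact moduli space by finitely many small $d_{GH}$-balls and notes that any two members of the same ball are homeomorphic (diffeomorphic in the Riemannian case). Your architecture --- local Reifenberg charts, a covering bound, finiteness of nerves, and discretization of transition maps --- is a legitimate alternative skeleton and is closer to the internal mechanics of the Cheeger--Colding appendix, but two of the regularity inputs you invoke are false, and one of them undermines the diffeomorphism half of your argument.

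First, the all-scales $\epsilon$-Reifenberg condition does \emph{not} produce bi-Lipschitz charts; the Cheeger--Colding construction yields bi-H\"older maps with exponent $1-\Psi(\epsilon)$, and Reifenberg flatness alone is well known to be insufficient for bi-Lipschitz parametrization (extra square-summability conditions on the flatness are needed). Relatedly, for an abstract compact metric space in $\mathcal{M}(n,\epsilon,r_0)$ there is no volume measure, so ``Ahlfors regularity'' is not available; the covering bound $N(n,r_0)$ must instead be extracted by packing inside the Euclidean GH-approximants. These are repairable. The serious gap is your claim that Reifenberg control yields a uniform lower bound on the harmonic radius and $C^{1,\alpha}$ control of the metric in the Riemannian case. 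This is false, and it is precisely the point of the present paper: harmonic radius bounds require two-sided Ricci control, whereas under only a Ricci lower bound (hence only Reifenberg control) the curvature, and with it the harmonic radius, can degenerate while the Reifenberg radius stays bounded below. The diffeomorphism finiteness in Theorem A.1.4 is instead obtained because the intermediate spaces $W_i$ in the Cheeger--Colding iteration are built as smooth manifolds with smooth transition maps, and the final map into the smooth target can be smoothed; no harmonic coordinates enter. As written, your proof of the ``moreover'' clause rests on a premise that contradicts the setting of the theorem, so that part needs to be replaced either by the $W_i$-construction of \cite{ChC1} or by the precompactness-plus-stability route the paper uses for Theorem~\ref{t:finite_top_bodyregion}.
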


\begin{proposition}\label{p:homeo_XY}[Theorem A.1.3 in \cite{ChC1} ]
Let $(X,d_1)$ and $(Y,d_2)$ be two compact metric spaces. 
Assume the Reifenberg radius $\min\{ r_{Rei,\epsilon,n}(X),r_{Rei,\epsilon,n}(Y)\}\ge r_0>0$. There exists a constant $\epsilon(n)$ such that if $\epsilon\le \epsilon(n)$ and 
\begin{align}
d_{GH}(X,Y)\le \epsilon,
\end{align}
then $X$ is homeomorphic to $Y$. Moreover, if $X,Y$ are Riemannian, then the homeomorphism is a diffeomorphism. 
\end{proposition}
\begin{remark}
$\epsilon$ in Proposition \ref{p:homeo_XY} is independent of $r_0$. Actually, by scaling we can always assume 
$r_0
\ge 1$. 
\end{remark}
For manifolds with Ricci curvature lower bound, let us recall the following Reifenberg radius lower bound estimate from \cite{Colding} \cite{ChC1}. 
\begin{lemma}[\cite{Colding} \cite{ChC1} ]\label{l:Reifenberg_Colding}
Let $(M^n,g,p)$ satisfy $\Ric\ge -(n-1)$. For any $\epsilon>0$, if $\delta\le \delta(n,\epsilon)$ and $d_{GH}(B_1(p),B_1(0^n))\le \delta$, then the Reifenberg radius $r_{Rei,\epsilon,n}(x)\ge 1/4$ for any $x\in B_{1/2}(p)$.
\end{lemma}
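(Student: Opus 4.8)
The plan is to argue by contradiction and compactness. If the assertion failed for some $\epsilon_0>0$, there would exist $(M_i^n,g_i,p_i)$ with $\Ric\ge-(n-1)$ and $d_{GH}(B_1(p_i),B_1(0^n))\to0$, points $y_i\in B_{3/4}(p_i)$, and radii $0<s_i\le1/4$ with
\begin{align}
d_{GH}\big(B_{s_i}(y_i),B_{s_i}(0^n)\big)>\epsilon_0\,s_i. \label{e:sketch-contra}
\end{align}
First I would apply Colding's volume convergence theorem \cite{Colding} to get $\Vol(B_1(p_i))\to\omega_n$; hence the sequence is noncollapsed and, after passing to a subsequence, $(M_i,p_i)\to(X,p_\infty)$ in the pointed Gromov--Hausdorff sense with $B_1(p_\infty)$ isometric to the Euclidean unit ball, $y_i\to y_\infty\in\overline{B_{3/4}(p_\infty)}$, and $s_i\to s_\infty\in[0,1/4]$.

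If $s_\infty>0$, the contradiction is soft: since $B_{s_i}(y_i)\subset B_1(p_i)$ and $B_{s_\infty}(y_\infty)$ is a genuine Euclidean ball, metric balls converge in the noncollapsed limit, so $d_{GH}(B_{s_i}(y_i),B_{s_i}(0^n))\to0$, which contradicts the lower bound $\epsilon_0 s_i\ge\tfrac12\epsilon_0 s_\infty>0$ valid for large $i$.

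The hard case, which I expect to be the main obstacle, is $s_\infty=0$: at infinitesimal scales a lower Ricci bound alone does not force Euclidean behaviour, so one must first turn it into an (almost) nonnegative bound by rescaling and then run a volume--rigidity argument. Set $\hat g_i:=s_i^{-2}g_i$, so $\Ric_{\hat g_i}\ge-(n-1)s_i^2\to0$ and the unit ball $\hat B_1(y_i)$ for $\hat g_i$ equals $B_{s_i}(y_i)$. The key step is to show the volume becomes nearly maximal, $\Vol(B_{s_i}(y_i))/(\omega_n s_i^n)\to1$. I would obtain this by combining: (i) at a fixed small scale $r_0<1/4$, $B_{r_0}(y_i)$ converges to the Euclidean ball $B_{r_0}(y_\infty)$, so $\Vol(B_{r_0}(y_i))/(\omega_n r_0^n)\to1$ by volume convergence; and (ii) Bishop--Gromov monotonicity of $\rho\mapsto\Vol(B_\rho(y_i))/\Vol_{-1}(B_\rho)$, which (since $\Vol_{-1}(B_\rho)/(\omega_n\rho^n)\to1$ as $\rho\to0$) propagates this near-maximality from $r_0$ down to the scales $s_i$ and $2s_i$. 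Then, in the $\hat g_i$-metric, the volume ratios at scales $1$ and $2$ both tend to $0$, so for large $i$ both $(n-1)s_i^2$ and the scale-$1$-to-$2$ volume pinching of $\hat g_i$ are below $\delta(n,\epsilon')$, and Theorem~\ref{t:Almostmetriccone} puts $\hat B_1(y_i)$ within $\epsilon'$ of the unit ball of a metric cone, which the full volume density forces to be $\RR^n$. Letting $\epsilon'\to0$ yields $d_{GH}(B_{s_i}(y_i),B_{s_i}(0^n))=s_i\,d_{GH}(\hat B_1(y_i),B_1(0^n))=o(s_i)$, contradicting \eqref{e:sketch-contra}. (Alternatively, in this last step one may pass to a pointed limit of $(\hat M_i,y_i)$ and invoke Bishop--Gromov rigidity for generalized nonnegative Ricci curvature.) Both cases being impossible, the lemma follows.
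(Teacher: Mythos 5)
The paper does not supply its own proof of this lemma; it is stated as a citation to Colding's volume rigidity paper and to Cheeger--Colding, so there is nothing internal to compare against. Judged on its own merits, your proof plan is correct and captures the essential ideas those references use: Colding's volume convergence (so that in the noncollapsed limit the unit ball has full measure $\omega_n$, and at any fixed sub-scale $r_0<1/4$ centered at $y_i\in B_{3/4}(p_i)$ the volume ratio tends to $1$), Bishop--Gromov monotonicity (to propagate near-maximal volume density from scale $r_0$ down to the scales $s_i,2s_i$, with the caveat you correctly note that one must also let $r_0\to 0$ to kill the fixed factor $\omega_n r_0^n/\Vol_{-1}(B_{r_0})<1$), and then Colding's volume almost-rigidity for the rescaled metrics $\hat g_i=s_i^{-2}g_i$, for which $\Ric_{\hat g_i}\ge-(n-1)s_i^2\to0$. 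Your contradiction/compactness framing and the split into $s_\infty>0$ versus $s_\infty=0$ are both fine; the soft case uses that metric balls of fixed positive radius converge in a noncollapsed limit.

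One remark on efficiency rather than correctness: the detour through Theorem~\ref{t:Almostmetriccone} followed by the observation that full volume density forces the cone to be $\RR^n$ is unnecessary. Once you have $\Ric_{\hat g_i}\ge -(n-1)s_i^2\to 0$ and $\Vol_{\hat g_i}(\hat B_1(y_i))\to\omega_n$, Colding's volume almost-rigidity theorem directly gives $d_{GH}(\hat B_1(y_i),B_1(0^n))\to 0$, which is the contradiction you want. The cone-recognition step is what that theorem proves internally, so invoking it directly is cleaner. You in fact gesture at this with your parenthetical alternative; I would simply make that the main route. In short: no gap, correct reconstruction of the cited argument, with one step that could be shortcut.
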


\subsection{Diffeomorphism of Body Region and Finite topology}

\begin{lemma}\label{l:finite_topology_bodyregion}
	Let $M_1^n,M_2^n$ be two complete manifolds and $U_i\subset M_i$ are open subsets with Reifenberg radius $r_{Rei,\epsilon}(x)\ge r>0$ for all $x\in U_i$, $i=1,2$. There exists $\epsilon(n)>0$ such that if 
	\begin{align}
	d_{GH}(B_r(U_1),B_r(U_2))\le r\epsilon,
	\end{align}
then there exists a diffeomorphism $F: U_1'\to U_2'$ with $B_{r/2}(U_i)\subset U_i'\subset B_r(U_i)$. 
	\end{lemma}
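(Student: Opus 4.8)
The plan is to deduce Lemma~\ref{l:finite_topology_bodyregion} from the rigidity of Reifenberg-regular spaces, namely Proposition~\ref{p:homeo_XY}, combined with a local-to-global gluing argument in the spirit of \cite{ChC1}. First I would fix the dimensional constant $\epsilon(n)$ to be the minimum of the constants appearing in Proposition~\ref{p:homeo_XY} and in the Reifenberg construction of coordinate charts, and I would normalize by scaling so that $r=1$; this is harmless since all hypotheses and conclusions are scale invariant. After scaling, the hypothesis is that $B_1(U_1)$ and $B_1(U_2)$ are $\epsilon$-Gromov--Hausdorff close and that every point of $U_i$ has Reifenberg radius at least $1$.

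The core of the argument is that on a ball of definite size around each point $x\in U_i$, the $\epsilon$-Reifenberg condition produces harmonic (or distance-function-based) coordinate charts to $\mathbb{R}^n$ that are bi-Hölder (indeed $C^{1,\alpha}$ when the underlying space is a manifold, by Anderson's estimates combined with the Reifenberg approximation of \cite{ChC1}), with estimates depending only on $n$ and $\epsilon$. Choosing an $\epsilon$-GH approximation $\phi:B_1(U_1)\to B_1(U_2)$, I would cover $U_1$ by balls $B_{c(n)}(x_\alpha)$ on which such charts exist, pull back the charts on the corresponding balls $B_{c(n)}(\phi(x_\alpha))\subset U_2$, and in each chart define a local map $F_\alpha$ between the manifolds by composing chart$^{-1}$ on the target side with the identity of $\mathbb{R}^n$ and the chart on the source side; because $\phi$ is an $\epsilon$-approximation and the charts are uniformly controlled, $F_\alpha$ is $C^1$-close to an isometry and in particular a diffeomorphism onto its image. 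One then glues the $F_\alpha$ using a partition of unity subordinate to the cover: on the overlaps $F_\alpha\circ F_\beta^{-1}$ is $C^1$-close to the identity (this is exactly where $\epsilon\le\epsilon(n)$ small is used, to keep the convex-combination of the $F_\alpha$ an immersion and injective), so the patched map $F$ is a well-defined diffeomorphism. Controlling the domain of definition so that $B_{1/2}(U_1)\subset U_1'\subset B_1(U_1)$ is a matter of choosing the cover of the slightly-shrunk set $B_{1/2}(U_1)$ by balls small enough that the charts — which need a definite-size neighborhood — all fit inside $B_1(U_1)$; the resulting $U_1'$ is the union of the chart domains.

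The main obstacle, and the only place requiring care, is the gluing step: patching local diffeomorphisms into a global one is not automatic, and one must verify that the averaged map remains injective and nonsingular. The standard device is to work in a single ambient $\mathbb{R}^N$ via an almost-isometric embedding (Reifenberg-regular spaces embed), average there, and project back; or, when the spaces are manifolds, to use the $C^{1,\alpha}$ harmonic-coordinate atlas and the fact that transition maps between overlapping charts and their analogues on the target are uniformly close to linear isometries, so a center-of-mass construction converges. All of the quantitative input — existence of the charts, their uniform regularity, and the smallness of transition maps — is precisely what the $\epsilon$-Reifenberg hypothesis with $\epsilon\le\epsilon(n)$ buys us, and it is the same input underlying Lemma~\ref{l:finite_homeo} and Proposition~\ref{p:homeo_XY}; indeed this lemma is essentially a relative (open-subset, with domain control) version of Proposition~\ref{p:homeo_XY}, and I would organize the write-up to reduce to that proposition applied chart-by-chart wherever possible rather than redoing the Reifenberg machinery.
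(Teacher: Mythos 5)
Your proposal diverges from the paper in a way that creates a genuine gap. The paper's proof of Lemma~\ref{l:finite_topology_bodyregion} is a direct adaptation of the iterative multi-scale construction of Cheeger--Colding's Theorem A.1.2: one builds a tower of smooth model manifolds $W_0^\alpha, W_1^\alpha, \dots$ from piecewise-Euclidean gluings at dyadic scales $2^{-i}$, constructs smooth diffeomorphisms $f_i^\alpha : W_i^\alpha \to W_{i+1}^\alpha$ between consecutive stages, identifies $W_{i_0}^\alpha$ with the neighborhood $U_\alpha'\subset M_\alpha$ at a fine enough scale, and finally produces $F$ as the composition of all these maps through a diffeomorphism $F_0 : W_0^1 \to W_0^2$ at the coarsest scale. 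The diffeomorphism never comes from perturbing a GH map at a single scale; it is assembled from smooth maps between smooth intermediate spaces.

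The step in your proposal that fails is the claim that the $\epsilon$-Reifenberg hypothesis produces harmonic coordinate charts with $C^{1,\alpha}$ estimates ``by Anderson's estimates combined with the Reifenberg approximation.'' Anderson's $C^{1,\alpha}$ harmonic radius bounds require two-sided pointwise Ricci bounds (or similar elliptic input); the Reifenberg condition in Definition~\ref{d:DefReifenbergradius} is a purely metric, Gromov--Hausdorff statement and gives no control on the metric tensor in any coordinate chart beyond bi-H\"older. In Lemma~\ref{l:finite_topology_bodyregion} there is no curvature assumption at all on $M_i$, so no such chart regularity is available. Consequently the partition-of-unity / center-of-mass gluing you propose does not go through: the local maps $F_\alpha$, built from a GH approximation $\phi$ (which is not even continuous) composed with bi-H\"older charts, are not $C^1$-close to isometries on overlaps, and averaging them will not yield an immersion, let alone a diffeomorphism. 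This is precisely the obstruction that the Cheeger--Colding multi-scale construction is designed to circumvent: at every stage the maps are between genuinely smooth Euclidean pieces, and smoothness is maintained by construction rather than extracted from nonexistent chart regularity. If you wish to reduce to Proposition~\ref{p:homeo_XY} ``chart-by-chart,'' you face the same issue, since that proposition itself is proved via the same multi-scale machinery for compact spaces and does not furnish the uniform $C^1$ transition-map control your gluing needs.
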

	\begin{proof}
	The proof of Theorem A.1.2 in \cite{ChC1} can be used here. We will just sketch the proof. For more detail, see \cite{ChC1}. 
 
By rescaling, without loss of generality, we can assume $r=30$. For each $i\ge 0$, we can choose finite subsets $X_i^\alpha\subset B_r(U_\alpha)$ such that $X_i^\alpha$ is a minimal $2^{-i}$-dense subset of $B_r(U_\alpha)$ and $X_0^\alpha\subset X_1^\alpha\subset \cdots $. We can rewrite $X_i^\alpha=\bigcup_{j=1}^{N_i^\alpha}Q_{i,j}^\alpha$ as \cite{ChC1} with $N_i^\alpha\le N(n)$.	 Following the same argument as \cite{ChC1} by constructing and suitably modifying the transition maps, if $\epsilon\le \epsilon(n)$ we can construct a sequence of manifolds $W_i^\alpha$ and diffeomorphisms $f_i^\alpha: W_i^\alpha\to W_{i+1}^\alpha$, and for $i_0$ sufficiently large,  diffeomorphism $f^\alpha: W_{i_0}^\alpha \to U_\alpha'$ with $B_{10}(U_\alpha)\subset U_\alpha'\subset B_{20}(U_\alpha)$. Furthermore, as in \cite{ChC1} if $\epsilon\le \epsilon(n)$, there exists a diffeomorphism $F_0: W_0^1\to W_0^2$. Therefore, the desired diffeomorphism $F$ is produced by 
$$F=f^2\circ f_{i_0-1}^2\circ \cdots f_0^2\circ F_0\circ (f_0^1)^{-1}\circ\cdots (f_{i_0-1}^1)^{-1}\circ (f^1)^{-1}: U_1'\to U_2'.$$
Furthermore, from the construction in \cite{ChC1}, the diffeomorphism $F$ is sufficiently close to the GH map if $\epsilon$ is small enough.	Therefore, we finish the proof. 
	\end{proof}

\begin{theorem}[Finite topology]\label{t:finite_top_bodyregion}
There exist $C=C(n,D)$ and $\epsilon(n)$ with the following property. Let $(M^n,g)$ be a complete Riemannian manifold and $U\subset M$ an open subset with $r_{Rei,\epsilon}(x)>r>0$ for all $x\in U$. Assume further that $\diam(U)\le D r$. Then there exists an open set $U'$ with $B_{r/2}(U)\subset U'\subset B_r(U)$ such that $U'$ has at most $C(n,D)$ many diffeomorphism types. 
\end{theorem}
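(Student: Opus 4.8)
The plan is to deduce Theorem~\ref{t:finite_top_bodyregion} from the ``local'' gluing statement of Lemma~\ref{l:finite_topology_bodyregion} together with a Gromov--Hausdorff precompactness argument. Fix $\epsilon=\epsilon(n)$ to be the constant of Lemma~\ref{l:finite_topology_bodyregion}, which we also take as the $\epsilon(n)$ in the statement. By rescaling the metric we may assume $r=1$, so the hypotheses read $r_{Rei,\epsilon}(x)>1$ for all $x\in U$ and $\diam(U)\le D$, and we must produce $U'$ with $B_{1/2}(U)\subset U'\subset B_{1}(U)$. Consider the collection $\cA=\cA(n,D)$ of all metric spaces of the form $(B_{1}(U),d_M|_{B_{1}(U)})$ arising from such data $(M^n,g,U)$; every member has diameter at most $D+2$.

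First I would show that $\cA$ is uniformly totally bounded, hence GH-precompact. The Reifenberg hypothesis gives, for every $y\in B_{1}(U)$ (choose $x\in U$ with $y\in B_1(x)$ and use $r_{Rei,\epsilon}(x)>1$), that $d_{GH}(B_s(y),B_s(0^n))\le\epsilon s$ for all $s\le 1$; iterating this down to small scales yields a local doubling estimate of the form ``$B_s(y)$ is covered by at most $C(n)(s/t)^{C(n)}$ balls of radius $t$'' for $t\le s\le 1$. Because the Reifenberg condition forces the space to be locally Euclidean of dimension $n$ at all scales $\le 1$, it precludes the high-multiplicity branching that would otherwise let a bounded-diameter space have unbounded covering number; combining the local doubling estimate with a chaining/Vitali argument and $\diam(B_{1}(U))\le D+2$, one obtains a bound $N(n,D,\sigma)$ on the number of $\sigma$-balls needed to cover any member of $\cA$. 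Thus $\cA$, viewed inside the space of compact metric spaces with the GH metric, is totally bounded, so there exist finitely many members $Z_1,\dots,Z_{N_0}$, $N_0=N_0(n,D)$, with $Z_k=(B_{1}(U_k),d_{M_k}|)$ for suitable $(M_k^n,g_k,U_k)$, forming an $(\epsilon/4)$-net of $\cA$; hence every $Z\in\cA$ satisfies $d_{GH}(Z,Z_k)<\epsilon/2\le\epsilon$ for some $k$.

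Now given arbitrary admissible $(M,g,U)$, choose $k$ with $d_{GH}(B_{1}(U),B_{1}(U_k))<\epsilon$ and apply Lemma~\ref{l:finite_topology_bodyregion} with $M_1=M$, $U_1=U$, $M_2=M_k$, $U_2=U_k$, $r=1$: this produces open sets $U_1',U_2'$ with $B_{1/2}(U_i)\subset U_i'\subset B_{1}(U_i)$ and a diffeomorphism $F\colon U_1'\to U_2'$. Set $U':=U_1'$; it satisfies the required nesting and is diffeomorphic to $U_2'$. Inspecting the proof of Lemma~\ref{l:finite_topology_bodyregion}, the thickening $U_2'$ is constructed (via the nerve/transition-map construction, once a system of nested finite nets of $B_1(U_k)$ is fixed) from the data $(M_k,U_k)$ alone, independently of $(M,g,U)$, which enters only through the initial matching $F_0$; moreover different net choices give diffeomorphic results. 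Hence each pair $(M_k,U_k)$ contributes a single diffeomorphism type, so $U'$ lies in at most $N_0(n,D)$ diffeomorphism types, and we set $C(n,D)=N_0(n,D)$.

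The step I expect to be the main obstacle is the uniform covering bound of the second paragraph: passing from the \emph{local} (scale $\le r$) Euclidean structure furnished by the Reifenberg condition to a \emph{global} bound, in terms of $D=\diam(U)/r$, on the number of small balls needed to cover $B_r(U)$. Local doubling together with a bounded diameter is not by itself sufficient (witness a star of many short intervals), so one must genuinely exploit that the Reifenberg condition excludes such branching. This is the body-region analogue of the precompactness input underlying Theorem~A.1.4 of \cite{ChC1} (our Lemma~\ref{l:finite_homeo}), and can be carried out by the same circle of ideas.
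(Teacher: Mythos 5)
Your proposal follows the paper's proof in all essentials: both argue that the family $\{B_r(U)\}$ is Gromov--Hausdorff precompact, extract a finite $\epsilon r$-net, and then invoke Lemma~\ref{l:finite_topology_bodyregion} to produce, for every admissible $U$, a thickening $U'$ diffeomorphic to a fixed thickening of the nearby net element. The only difference is one of emphasis: the paper simply cites Gromov's precompactness theorem, whereas you correctly flag that verifying the uniform covering bound (needed to invoke it when the Reifenberg hypothesis holds only at scales $\le r$ while $\diam(U)$ can be as large as $Dr$) requires an argument, and you sketch it; this is a welcome clarification of the same route rather than a different one.
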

\begin{remark}
    Basing on Theorem \ref{t:finite_top_bodyregion}, sometimes we will omit the $\epsilon(n)$ in $r_{Rei,\epsilon(n)}(x)$ and write it as $r_{Rei}(x)$. 
\end{remark}
\begin{remark}
If one assumes a lower Ricci curvature bound for the manifold $M$, then one may be possible to prove the above finiteness by using Ricci flow where Perelman's Pseudolocality would play a key role in the proof. (See \cite{ChLee}\cite{LS22}\cite{ST21}\cite{He}) 
\end{remark}

\begin{proof}
The proof follows directly from Gromov's precompactness and Lemma \ref{l:finite_topology_bodyregion}. Actually, consider the space $\mathcal{M}(\epsilon,r,D,n):=\{B_r(U)\subset M: \text{ $U,M$ as in the theorem}\}$. Denote $\overline{\mathcal{M}}(\epsilon,r,D,n)$ to be the closure of $\mathcal{M}(\epsilon,r,D,n)$ under Gromov-Hausdorff topology.  Since $r_{Rei,\epsilon}(x)\ge r>0$ for each $x\in U$, by Gromov's precompactness theorem (see \cite{Gro81}), the set  $\overline{\mathcal{M}}(\epsilon,r,D,n)$ is compact in Gromov-Hausdorff topology. Therefore, we can choose finitely many $\{U_1,U_2,\cdots, U_N\}\subset \overline{\mathcal{M}}(\epsilon,r,D,n)$ with $N\le N(n,D,\epsilon(n))$ such that the $\epsilon(n) r/10$-balls of $U_i$ covers $\overline{\mathcal{M}}(\epsilon,r,D,n)$. For any two $W,V\in T_{\epsilon r/10,d_{GH}}(U_i)\cap \mathcal{M}(\epsilon,r,D,n)$, by Lemma \ref{l:finite_topology_bodyregion}, there exist diffeomorphisms between $V'$ and $U_i'$, between $W'$ and $U_i''$ with $B_{r/2}(V)\subset V'\subset B_r(V)$, $B_{r/2}(W)\subset W'\subset B_r(W)$ and $B_{r/2}(U_i)\subset U_i'\subset B_r(U_i)$, $B_{r/2}(U_i)\subset  U_i''\subset B_r(U_i)$, where $T_{\epsilon r/10,d_{GH}}(U_i)$ is the $\epsilon r/10$-ball of $U_i$ in Gromov-Hausdorff topology. From the proof of Lemma \ref{l:finite_topology_bodyregion}, the subset $U_i'$ and $U_i''$ could be chosen the same for different $V,W\in T_{\epsilon r/10,d_{GH}}(U_i)\cap \mathcal{M}(\epsilon,r,D,n)$.  Due to the finiteness of $U_i$,  we finish the proof. 
\end{proof}

\section{Finite diffeomorphism with bounded $L^{n/2}$ of Riemannian curvature}

In this section, we consider complete Riemannian manifolds $(M^n,g,p)$ satisfying
\begin{align}
\Ric\ge -(n-1), \  \Vol(B_1(p))\ge \rv>0, \ \ \int_{B_8(p)}|\Rm|^{n/2}\le \Lambda.   \label{main-assumptions}
\end{align} 

We will first consider the regularity in a neck region and prove Proposition~\ref{p:neck_diff}. Based on Cheeger-Naber \cite{CheegerNaber_codimension4} we can then prove a decomposition theorem. The finite diffeomorphism type theorem follows by the decomposition theorem.

\subsection{Diffeomorphism of Neck Region}

\begin{lemma}\label{l:cross_section}
Let $(M^n,g,p)$ be a complete manifold satisfying \eqref{main-assumptions}.
For any $\epsilon>0$ there exists $\delta(n,\epsilon,\rv,\Lambda)>0$ such that  if the volume ratio
\begin{align}
|\cV_{r/2}(p)-\cV_2(p)|\le \delta
\end{align}
for some $r\le 1$,  then there exists a space form $\mathbb{S}^{n-1}/\Gamma$ with $|\Gamma|\le C(n,\rv)$ such that  for any $r\le s\le 1$ 
\begin{align}
d_{GH}(B_s(p),B_s(\bar p))\le \epsilon s, ~~\text{ with cone vertex $\bar p\in C(\mathbb{S}^{n-1}/\Gamma)$ }.
\end{align}
\end{lemma}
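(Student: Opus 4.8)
The plan is to combine Cheeger--Colding's almost-volume-cone-implies-almost-metric-cone theorem (Theorem~\ref{t:Almostmetriccone}) with the rigidity input from \cite[Theorem~10.2]{Cheeger} which, under the $L^{n/2}$ curvature bound \eqref{main-assumptions}, forces any metric cone arising as a Gromov--Hausdorff limit of balls in such manifolds to be a \emph{flat} cone $C(\mathbb{S}^{n-1}/\Gamma)$ with $|\Gamma|\le C(n,\rv)$. First I would argue by contradiction: suppose the conclusion fails, so there is a sequence $(M_i,g_i,p_i)$ satisfying \eqref{main-assumptions}, scales $r_i\le 1$ with $|\cV_{r_i/2}(p_i)-\cV_2(p_i)|\le 1/i$, yet for each $i$ there is some $s_i\in[r_i,1]$ at which $B_{s_i}(p_i)$ is not $\epsilon s_i$-close to a ball about a cone vertex in any $C(\mathbb{S}^{n-1}/\Gamma)$ with $|\Gamma|\le C(n,\rv)$. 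Passing to a subsequence, $(M_i,g_i,p_i)\to (X,d,p_\infty)$ in the pointed GH sense, with $X$ non-collapsed (volume lower bound) and satisfying the weak curvature bound.

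The key step is the monotonicity of $\cV_r$ in $r$ (stated after \eqref{e:volume ratio}): from $|\cV_{r_i/2}(p_i)-\cV_2(p_i)|\le 1/i$ and monotonicity, the function $r\mapsto \cV_r(p_i)$ is almost constant on $[r_i/2,2]$, hence for \emph{every} $\rho\in[r_i/2,2]$ we have $|\cV_{2\rho}(p_i)-\cV_\rho(p_i)|\to 0$. By Theorem~\ref{t:Almostmetriccone} (applied after rescaling, noting $\Ric\ge-(n-1)$ gives $\Ric\ge-\delta(n-1)$ on small enough scales, and that the volume-ratio pinching transfers between the $-1$ and $-\delta$ model comparisons when the scale is small), each ball $B_\rho(p_i)$ with $\rho\in[r_i,1]$ is $o(1)\cdot\rho$-close to a metric cone ball. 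Taking limits, every such rescaled limit is a metric cone; in particular the limit $X$ has the property that $B_1(p_\infty)$ and all smaller balls down to any fixed scale are metric cones with vertex $p_\infty$, so $B_1(p_\infty)$ itself is isometric to a ball in a metric cone $C(Z)$. Now invoke \cite[Theorem~10.2]{Cheeger}: the $L^{n/2}$ bound forces this cone to be flat, $C(Z)=C(\mathbb{S}^{n-1}/\Gamma)$ with $|\Gamma|\le C(n,\rv)$ (the bound on $|\Gamma|$ coming from the volume lower bound $\rv$, since $\Vol(\mathbb{S}^{n-1}/\Gamma)$ is bounded below).

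Finally, with $X$'s unit ball a flat cone ball, I would run the contradiction: for the bad scales $s_i$, rescale $B_{s_i}(p_i)$ by $s_i^{-1}$; by the metric-cone estimate above together with the structure of the limit, $B_1(s_i^{-1}\cdot p_i)$ is GH-close to $B_1$ of a cone vertex in a flat cone $C(\mathbb{S}^{n-1}/\Gamma_i)$. Since there are only finitely many such $\Gamma$ with $|\Gamma|\le C(n,\rv)$ (up to isomorphism, and the spherical space forms they give are discrete in GH distance), and $|\Gamma_i|\le C(n,\rv)$, for large $i$ this contradicts the assumption that $B_{s_i}(p_i)$ is $\epsilon s_i$-far from every such cone ball. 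The main obstacle I anticipate is the bookkeeping needed to make the cone-structure hold \emph{uniformly across the whole range} $[r_i,1]$ of scales rather than at a single scale: one must use the full-strength monotonicity of $\cV_r$ to propagate the pinching $|\cV_{r_i/2}(p_i)-\cV_2(p_i)|\le\delta$ to all intermediate scales simultaneously, and then pass to a diagonal limit so that the limiting object is a genuine cone on \emph{every} scale, which is what lets one quote Cheeger's rigidity. A secondary technical point is handling the mismatch between the $\cV_r$ defined with the $\mathbb{H}^n$ comparison \eqref{e:volume ratio} and the $-\delta$-curvature comparison in Theorem~\ref{t:Almostmetriccone}; this is harmless because at small scales both comparisons agree to higher order, but it needs to be said.
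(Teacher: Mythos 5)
Your approach is essentially the same as the paper's: use the volume-ratio pinching together with monotonicity to invoke Theorem~\ref{t:Almostmetriccone} and get cone structure at every scale in $[r,1]$, then apply Cheeger's Theorem~10.2 to a contradictory Gromov--Hausdorff limit to identify the cross section as a spherical space form, and use the volume lower bound to bound $|\Gamma|$. The paper's argument is cleaner on one technical point: it rescales to $s_i=1$ \emph{before} passing to the limit, so that the cone vertex sits directly in the limit $X$ and Theorem~10.2 is applied to the tangent cone of $X$ at that vertex. You pass to the limit at unit scale first and then rescale by $s_i^{-1}$, which requires you to identify that rescaled limit with a tangent cone of $X$; if $s_i\to 0$, this is a blow-up of the limit $X$, and matching it with the blow-up of the sequence $M_i$ requires a diagonal argument or additional uniformity that you should not wave off.

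The genuine gap is at the end. Your contradiction hypothesis --- that there exist scales $s_i$ at which $B_{s_i}(p_i)$ is $\epsilon s_i$-far from \emph{every} flat cone ball --- is strictly stronger than the negation of the lemma, since the lemma asserts a \emph{single} $\Gamma$ works for all scales $s\in[r,1]$ simultaneously. Refuting your hypothesis only produces, for each $s$, some $\Gamma_s$ with $d_{GH}(B_s(p),B_s(\bar p_s))\le\epsilon s$ in $C(\mathbb S^{n-1}/\Gamma_s)$; it does not pin down one $\Gamma$. The paper closes this by a separate observation: as $s$ ranges over the connected interval $[3r/4,1]$ the cross sections $Z_s$ move continuously in GH distance, while the space forms $\mathbb S^{n-1}/\Gamma$ with $|\Gamma|\le C(n,\rv)$ form a finite GH-discrete set, so by rigidity $\Gamma_s\equiv\Gamma$. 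You do observe the finiteness and discreteness, but you deploy it inside the contradiction step rather than where it is actually required, namely to conclude constancy of $\Gamma$ across scales. Repairing this is straightforward with the ingredients you already have, but as written the proposal does not establish the full statement.
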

\begin{remark}\label{r:regularityonneck}
   
   Assuming as Lemma \ref{l:cross_section}, we have  for any $x\in B_{1/2}(p)$ the Reifenberg radius of $x$ has a definite lower bound. To see this, by Lemma \ref{l:cross_section}, the ball $B_{r_0}(x)$ is close to a ball $B_{r_0}(\bar x)\subset C(S^{n-1}/\Gamma)$ which is smooth, where the radius $r_{0}>0$ depending on $\Gamma$ and the ball $B_{r_0}(\bar x)$ is a Euclidean ball.   Hence by Lemma \ref{l:Reifenberg_Colding},  we get the desired Reifenberg radius lower bound. 
\end{remark}

\begin{proof}
For any $\epsilon$, if $|\cV_{r/2}(p)-\cV_2(p)|\le \delta<\delta(n,\epsilon,\rv)$, by Theorem~\ref{t:Almostmetriccone},  we have for any $3r/4\le s\le 1$ the ball $B_{s}(p)$ is close to a cone $C(Z_s)$:  
\begin{align}\label{e:almostcloseconeZs}
d_{GH}(B_s(p),B_s(\bar p_s))\le \epsilon s.
\end{align}
Now we use  Theorem 10.2 b) of Cheeger \cite{Cheeger}
to show  the cross section $Z_s$ is a space form $\mathbb{S}^{n-1}/\Gamma_s$ with $|\Gamma_s|\le C(n,\rv)$. 
We  argue this by contradiction. 
Assume there exists an $\epsilon_0>0$, and a sequence of $M_i^n$ and $r_i\le s_i\le 1$ satisfy $|\cV_{r_i/2}(p_i)-\cV_2(p_i)|\le \delta_i\to 0 $ and \eqref{main-assumptions}, but   $d_{GH}(Z_{s_i},\mathbb{S}^{n-1}/\Gamma)>\epsilon_0$ for any $\Gamma\le O(n)$, where  $Z_{s_i}$ is the cross section in \eqref{e:almostcloseconeZs} with respect to $(M_i^n,p_i)$. By scaling, we may assume $s_i=1$. Up to a subsequence, let us denote $X$ the limit of $B_{1}(p_i)$. By \eqref{e:almostcloseconeZs} we know that the limit is the unit ball in a metric cone with cross section $Z=\lim_{i\to \infty}Z_{s_i}$ and $p_i\to \bar p\in X$ the cone vertex.  Since   $d_{GH}(Z_{s_i},\mathbb{S}^{n-1}/\Gamma)>\epsilon_0$ we get $d_{GH}(Z,\mathbb{S}^{n-1}/\Gamma)\ge \epsilon_0$. By \cite[Theorem 10.2 b)]{Cheeger}, any tangent cone of $X$ is a space form, which implies that the tangent cone at $\bar p$ is a space form. On the other hand, noting that $\bar p$ is the vertex of metric cone $C(Z)$, then we have $C(Z)$ is a space form which contradicts to $d_{GH}(Z,\mathbb{S}^{n-1}/\Gamma)\ge \epsilon_0$ for any $\Gamma$. Here the estimate $|\Gamma|\le C(n,\rv)$ follows directly from the volume lower bound $\Vol(B_1(p))\ge \rv>0$ and volume convergence. 

On the other hand, since the space of cross sections is connected in Gromov-Hausdorff topology, by the rigidity of space form, we have that $\Gamma_s=\Gamma_s'=\Gamma$ for each $3r/4\le s,s'\le 1$. Hence we finish the proof. 
\end{proof}

Now we are ready to prove the following diffeomorphism theorem by slightly modifying Cheeger-Colding's proof (see \cite{Cheeger_finiteness} \cite{AnCh} \cite{ChC1}  ). 
\begin{proposition}\label{p:neck_diff}
Let $(M^n,g,p)$ be a complete manifold satisfying \eqref{main-assumptions}. 
There exists $\delta=\delta(n,\rv, \Lambda)$ such that  if, for any $r\le 1/20$, 
\begin{align}
|\cV_{r/2}(p)-\cV_2(p)|\le \delta, 
\end{align}
 then there exists a diffeomorphism $F: \mathbb{S}^{n-1}/\Gamma\times (r/2,1/2)\to \cN$ where $\Gamma\subset O(n)$ and $A_{11r/20,9/20}(p)\subset \cN\subset A_{9r/20,11/20}(p)$ with $|\Gamma|\le C(n,\rv)$.
 \end{proposition}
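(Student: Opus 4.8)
The plan is to combine Lemma~\ref{l:cross_section} (which identifies the geometry of the annular neck) with Lemma~\ref{l:finite_topology_bodyregion} (which upgrades GH-closeness to a diffeomorphism on Reifenberg-regular sets), cover the neck by finitely many overlapping pieces on which both are applicable, and then glue the local diffeomorphisms into a single global one. First, since $|\cV_{r/2}(p)-\cV_2(p)|\le\delta$, Lemma~\ref{l:cross_section} gives a fixed space form $\mathbb{S}^{n-1}/\Gamma$ with $|\Gamma|\le C(n,\rv)$ such that $d_{GH}(B_s(p),B_s(\bar p))\le\epsilon s$ for all $r\le s\le 1$, where $\bar p$ is the vertex of the flat cone $C(\mathbb{S}^{n-1}/\Gamma)$. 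By Remark~\ref{r:regularityonneck} together with Lemma~\ref{l:Reifenberg_Colding}, every point $x$ in the annulus $A_{9r/20,11/20}(p)$ has Reifenberg radius $r_{Rei,\epsilon}(x)\ge c_0(n,\Gamma)\,d(x,p)$: indeed at such a point the ball of radius $\sim c_0\,d(x,p)$ is $\epsilon$-close to a Euclidean ball, since away from the vertex the cone $C(\mathbb{S}^{n-1}/\Gamma)$ is locally isometric to $\mathbb{R}^n$. The same statement holds on the model annulus $\mathbb{S}^{n-1}/\Gamma\times(9r/20,11/20)$, with its warped-product cone metric $dt^2+t^2 g_{\mathbb{S}^{n-1}/\Gamma}$.

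Next I would decompose the annulus dyadically. Set $t_j=2^{-j}$ for $j$ ranging over $0\le j\le J$ with $J\sim\log_2(1/r)$, and on each dyadic shell $A_j:=A_{t_{j+1}/2,\,2t_j}(p)$ apply Lemma~\ref{l:finite_topology_bodyregion} at scale $r_j\sim t_j$ to $M$ and to the model cone: the GH-closeness from Lemma~\ref{l:cross_section}, rescaled by $t_j$, is $\le\epsilon$, and the Reifenberg radii on both $A_j$ and its model counterpart exceed $c_0 t_j$, so if $\epsilon\le\epsilon(n)$ we obtain a diffeomorphism $F_j$ from an open set $\cN_j\subset A_j$ in $M$ onto an open set $\hat\cN_j$ in $\mathbb{S}^{n-1}/\Gamma\times I_j$ with $A_{t_{j+1}/2+\eta,\,2t_j-\eta}\subset\cN_j$ for a controlled $\eta$, and — crucially — with $F_j$ uniformly GH-close to the given approximation. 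The point of keeping the $F_j$ close to the \emph{same} ambient GH-identification is that on each overlap $\cN_j\cap\cN_{j+1}$ (a dyadic shell where both are defined) the composition $F_{j+1}\circ F_j^{-1}$ is a diffeomorphism of a shell in the model cone onto a nearby shell, $C^0$-close to the identity in the rescaled metric.

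Finally I would glue. Since on the model cone $\mathbb{S}^{n-1}/\Gamma\times(r/2,1/2)$ the overlap shells form a standard nested family and each transition $F_{j+1}\circ F_j^{-1}$ is $C^0$-close to the identity, one isotopes $F_{j+1}$ to agree with $F_j$ on the overlap by a center-of-mass / partition-of-unity averaging, exactly as in the gluing step of the proof of Lemma~\ref{l:finite_topology_bodyregion} (see \cite{ChC1}, Theorem A.1.2); this is legitimate on the model side because there the metric is smooth and has bounded geometry on each shell. Composing the glued maps from $j=0$ down to $j=J$ produces a diffeomorphism $F$ from an open set $\cN$ with $A_{11r/20,9/20}(p)\subset\cN\subset A_{9r/20,11/20}(p)$ onto an open subset of the model cone; shrinking slightly in the radial factor identifies the target with $\mathbb{S}^{n-1}/\Gamma\times(r/2,1/2)$, giving the claimed $F$. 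I expect the main obstacle to be the gluing/consistency step: one must arrange the local diffeomorphisms from Lemma~\ref{l:finite_topology_bodyregion} to be simultaneously close to a single reference identification so the transition maps are controllably small on all $O(\log(1/r))$ overlaps, and then carry out the isotopy uniformly over unboundedly many scales without the estimates degenerating — this is the scale-invariance heart of the argument, and it is where one genuinely uses that the cross section is a \emph{fixed} smooth space form rather than varying with the scale.
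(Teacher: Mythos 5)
Your overall strategy is sound but it is organized differently from the paper's. The paper proves Proposition~\ref{p:neck_diff} by running a single \emph{multi-scale} Cheeger--Colding construction in the spirit of Theorem~A.1.3 of \cite{ChC1}: it chooses nets $X_i\subset A_{r/5,4/5}(p)$ and $Y_i\subset A_{r/5,4/5}(\bar p)$ whose density is scale-adapted (roughly $2^{\ell-i}$-dense on the dyadic shell of radius $2^\ell$ for every $\ell$ with $r/5\le 2^\ell\le 4/5$), builds the chain of Euclidean gluings $W_i$, $Z_i$ over all scales simultaneously, and produces the diffeomorphism $F=G'\circ G_{i_0-1}\circ\cdots\circ H_0\circ\cdots\circ(F')^{-1}$ in one shot. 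This exploits the crucial fact from Lemma~\ref{l:cross_section} that the cone cross section is the \emph{same} $\mathbb{S}^{n-1}/\Gamma$ at every scale, so no scale-by-scale gluing is needed. Your route instead applies Lemma~\ref{l:finite_topology_bodyregion} separately on each dyadic shell and then glues the resulting $F_j$ by isotoping the transitions $F_{j+1}\circ F_j^{-1}$ to the identity. That is a legitimate alternative, but it is not what the paper does here; it is precisely the strategy the paper reserves for the K\"ahler case (Proposition~\ref{p:neck_homeo}), where the cross sections $Z_s$ genuinely vary with $s$ and the direct multi-scale construction is unavailable.

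Two points worth flagging. First, you attribute the gluing step to the ``gluing step of Theorem~A.1.2'' (your partition-of-unity/center-of-mass remark). That is not quite the right reference: the isotopy you need — pushing an embedding that is $C^0$-close to the inclusion $X\times(-1,1)\hookrightarrow X\times\mathbb{R}$ to agree with the identity near $X\times\{0\}$, supported away from the ends — is a Siebenmann-type deformation lemma, exactly the paper's Lemma~\ref{l:deformation} (cited to Siebenmann and Kapovitch), and proving that its $\delta$ depends only on $(n,D,a,\epsilon)$ rather than on the individual cross section is itself a compactness argument. Your sketch is correct in spirit but would need this lemma to be stated and invoked explicitly; otherwise the $O(\log(1/r))$-fold gluing is not controlled. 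Second, you are right that the errors $\epsilon t_j$ sum geometrically and that the modification at step $j$ is supported in the shell of scale $t_j$, so nothing degenerates as $r\to 0$; that is indeed the mechanism the paper uses (inside Proposition~\ref{p:neck_homeo}) to make the inductive gluing $\hat F_{i+1}$ out of $\hat F_i$ and $F_{i+1}$ work at all scales. In short: your proof would go through, but it is a shell-by-shell gluing in the style of the K\"ahler argument, whereas the paper obtains Proposition~\ref{p:neck_diff} more directly via the Cheeger--Colding multi-scale construction, which is available precisely because the cross section is a fixed space form.
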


\begin{proof}
This proposition essentially follows from Theorem A.1.3 of Cheeger-Colding \cite{ChC1}(see also Remark A.1.47 of Cheeger-Colding \cite{ChC1}). We will just sketch the proof as the proof of Lemma \ref{l:finite_topology_bodyregion}. For more detail, see \cite{ChC1}.

For any $\epsilon>0$ if $\delta\le\delta(n,\rv,\epsilon, \Lambda)$, by Lemma \ref{l:cross_section},  there exists a space form $\mathbb{S}^{n-1}/\Gamma$ with $|\Gamma|\le C(n,\rv)$ such that  for any $r\le s\le 1$ 
\begin{align}
d_{GH}(B_s(p),B_s(\bar p))\le \epsilon s, ~~\text{ with cone vertex $\bar p\in C(\mathbb{S}^{n-1}/\Gamma)$ }
\end{align}
Now we can begin the outline for the construction of diffeomorphism. 
For each $i\ge 0$, we can choose finite subsets $X_i\subset A_{r/5,4/5}(p)\subset M$ and $Y_i\subset A_{r/5,4/5}(\bar p)\subset C(S^{n-1}/\Gamma)$ such that $X_i\cap A_{2^{\ell-1/3,2^{\ell+4/3}}}(p)$ and $Y_i\cap A_{2^{\ell-1/3,2^{\ell+4/3}}}(\bar p)$ are $2^{\ell-i}$-dense in $A_{2^{\ell-1/3,2^{\ell+4/3}}}(p)$ and $A_{2^{\ell-1/3,2^{\ell+4/3}}}(\bar p)$ respectively, for all integer $\ell$ satisfying $r/5 \le 2^{\ell}\le 4/5$. Using such $X_i$ and $Y_i$,  following the same argument as \cite{ChC1},  by constructing and suitably modifying the transition maps, if $\epsilon\le \epsilon(n)$ we can construct a sequence of manifolds $W_i,Z_i$ and diffeomorphism $F_i: W_i\to W_{i+1}$ and $G_i:Z_i\to Z_{i+1}$, and for $i_0$ sufficiently large,  diffeomorphism $F': W_{i_0} \to \cN'$ with $A_{2r/5,3/5}(p)\subset \cN'\subset A_{r/5,4/5}(p)$, and diffeomorphism $G': Z_{i_0}\to \cN_{\Gamma}'$ with $A_{2r/5,3/5}(\bar p)\subset \cN_{\Gamma}'\subset A_{r/5,4/5}(\bar p)$.

Furthermore, as in \cite{ChC1} if $\epsilon\le \epsilon(n)$, there exists a diffeomorphism $H_0: W_0\to Z_0$. Therefore, the desired diffeomorphism $F$ is produced by 
$$F=G'\circ G_{i_0-1}\circ \cdots G_0\circ H_0\circ F_0^{-1}\circ\cdots F_{i_0-1}^{-1}\circ (F')^{-1}: \cN'\to \cN_{\Gamma}'.$$
Furthermore, from the construction in \cite{ChC1}, the diffeomorphism $F$ is sufficiently close to the GH map if $\epsilon$ is small enough.	Therefore, by shrinking the domain $\cN\subset \cN'$ such that $F(\cN)=A_{r/2,1/2}(\bar p)$, we finish the whole proof. 
\end{proof}

\subsection{Decomposition theorem}

Combining the results above and the argument in Cheeger-Naber \cite{CheegerNaber_codimension4} (See also \cite{AnCh},\cite{Cheeger_finiteness}, \cite{Jiang_Naber}, \cite{NaVal} ), we will prove the following decomposition theorem which is the key to our main theorem.  

\begin{theorem}\label{t:decomposition}
Let $(M^n,g)$ satisfy \eqref{main-assumptions} and $\diam(M,g)\le D$. 
Then  we have the following decomposition
\begin{align}
M^n=\cB^1\cup \bigcup_{j_2=1}^{N_2}\cN_{j_2}^2\cup \bigcup_{j_2=1}^{N_2}\cB_{j_2}^2\cup \cdots \cup \bigcup_{j_k=1}^{N_k}\cN_{j_k}^k\cup \bigcup_{j_k=1}^{N_k}\cB_{j_k}^k.
\end{align}
 such that each $\cB$ and $\cN$ are open and satisfy the following:
 \begin{itemize}
 \item[(1)] If $x\in \cB_j^\ell$, then the Reifenberg radius $r_{Rei}(x)\ge r_0(n,\rv,D,\Lambda) \diam(\cB_j^\ell)$, where $r_{Rei}(x)=r_{Rei,\epsilon(n)}(x)$ with $\epsilon(n)$ as in Theorem \ref{t:finite_top_bodyregion}.
 \item[(2)] Each neck $\cN_{j}^\ell$ is diffeomorphic to $\mathbb{R}\times \mathbb{S}^{n-1}/\Gamma_{j}^{\ell}$ where $|\Gamma_{j}^{\ell}|\le C(n,\rv)$.
 \item[(3)] $\cN_j^\ell\cap \cB_j^\ell$ is diffeomorphic to $\mathbb{R}\times \mathbb{S}^{n-1}/\Gamma_{j}^{\ell}$.
 \item[(4)] $\cB_j^{\ell-1}\cap \cN_j^\ell$ are either empty or diffeomorphic to $\mathbb{R}\times \mathbb{S}^{n-1}/\Gamma_{j}^{\ell}$.
 \item[(5)] $N_{\ell}\le N(n,\rv,D,\Lambda)$ and $k\le k(\rv,D,n,\Lambda)$.
 \end{itemize}
\end{theorem}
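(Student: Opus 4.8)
The plan is to adapt the body--neck decomposition of Cheeger--Naber \cite{CheegerNaber_codimension4} (see also \cite{AnCh}, \cite{Cheeger_finiteness}, \cite{Jiang_Naber}, \cite{NaVal}), using the volume ratio $\cV_r(x)$ from \eqref{e:volume ratio} as the monotone quantity. There are two changes from the two--sided Ricci setting: the $\epsilon$--regularity used there is replaced by the Reifenberg--radius control coming from Lemma~\ref{l:cross_section} and Remark~\ref{r:regularityonneck}, so that all local diffeomorphisms are produced by the Cheeger--Colding Appendix~A machinery behind Lemma~\ref{l:finite_topology_bodyregion}, Theorem~\ref{t:finite_top_bodyregion} and Proposition~\ref{p:neck_diff} rather than by harmonic coordinates; and the local models are cones over space forms $\mathbb S^{n-1}/\Gamma$ with $|\Gamma|\le C(n,\rv)$, furnished by Cheeger's Theorem~10.2 \cite{Cheeger} through the $L^{n/2}$ bound, rather than Euclidean balls.

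We first fix $\epsilon=\epsilon(n)$ small enough for Theorem~\ref{t:finite_top_bodyregion}, Lemma~\ref{l:finite_homeo} and Proposition~\ref{p:homeo_XY}, and then $\delta=\delta(n,\rv,\Lambda)$ as in Proposition~\ref{p:neck_diff}. Call $(x,r)$, $x\in M$, $0<r\le 1$, a \emph{pinched scale} if $\cV$ is $\delta$--pinched over a long range of scales around $r$, in the sense of Proposition~\ref{p:neck_diff} (after rescaling so that range becomes $[r/2,2]$). By Lemma~\ref{l:cone_structure}, every $x$ admits a pinched scale in $[\kappa r,r]$ with $\kappa=\kappa(n,\rv,\delta)$; and at a pinched scale, Lemma~\ref{l:cross_section} identifies $B_s(x)$ up to error $\epsilon s$ with a ball in a cone over a space form $\mathbb S^{n-1}/\Gamma_x$ throughout the associated range of $s$. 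If $\Gamma_x=\{1\}$, then by Lemma~\ref{l:Reifenberg_Colding} we get a definite lower bound $r_{Rei}(x)\ge c(n)\, r_x$ and call $(x,r)$ a \emph{body scale}; if $\Gamma_x\neq\{1\}$ we call it a \emph{neck scale}, and Proposition~\ref{p:neck_diff} makes the associated annulus diffeomorphic to $\mathbb R\times\mathbb S^{n-1}/\Gamma_x$.

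We then run the decomposition top--down. Let $\cB^1$ be a suitable open thickening of the set of $x$ whose largest pinched scale below $c(n)D$ is a body scale; by the connectedness/rigidity argument in the proof of Lemma~\ref{l:cross_section} the cone group is uniformly trivial on $\cB^1$, so every $x\in\cB^1$ has $r_{Rei}(x)\ge r_0(n,\rv,D,\Lambda)\,\diam(\cB^1)$, which is (1). On $M\setminus\cB^1$ we apply a Vitali stopping--time argument: for each remaining point pick its largest pinched scale, necessarily a neck scale with nontrivial group; passing to a bounded--multiplicity subfamily of the associated balls, each selected ball sits in a maximal range of scales on which the cone group equals a fixed $\Gamma$, and on that range Proposition~\ref{p:neck_diff} yields a neck $\cN^2_{j_2}\cong\mathbb R\times\mathbb S^{n-1}/\Gamma^2_{j_2}$, which is (2). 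The outer end of $\cN^2_{j_2}$ meets $\cB^1$ along an annulus over $\mathbb S^{n-1}/\Gamma^2_{j_2}$, giving (4) at this level; at the inner end the construction restarts, producing the next--generation body $\cB^2_{j_2}$ (whence (3), again by persistence of the annular structure), then $\cN^3$, and so on. For the counts: crossing each neck drops the (nearly constant) value of $\cV$ by a definite amount $c(n)>0$, while $0\le\cV_r(x)\le C(n,\rv,D)$ for all $x$ and $r\le 2D$, so the number of generations is $k\le k(n,\rv,D,\Lambda)$; and since each neck carries a definite portion $\ge\epsilon_0(n)>0$ of $\int_M|\Rm|^{n/2}\le\Lambda$ while Bishop--Gromov comparison ($\Ric\ge-(n-1)$, $\Vol(M)\ge\rv$, $\diam(M)\le D$) controls the lateral count, one gets $N_\ell\le N(n,\rv,D,\Lambda)$. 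This gives (5) and completes the decomposition.

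The step we expect to be the main obstacle is the bookkeeping of this iteration --- the technical heart of \cite{CheegerNaber_codimension4} adapted to the present setting: one must choose the stopping scales so that every body piece is regular down to a scale comparable to its own diameter (the scale--invariant form of (1)), so that consecutive neck and body pieces glue along annuli over one and the same space form $\mathbb S^{n-1}/\Gamma^\ell_j$ (so that (3) and (4) hold verbatim), and so that the groups strictly shrink across necks (so that the process terminates), all while keeping the covering multiplicity and hence $N_\ell$ bounded. The genuinely new points --- that the local model is a cone over $\mathbb S^{n-1}/\Gamma$ rather than a Euclidean ball, and that regularity is measured by the Reifenberg radius --- are handled by the results already in place: Lemma~\ref{l:cross_section}, Remark~\ref{r:regularityonneck}, Proposition~\ref{p:neck_diff}, Lemma~\ref{l:finite_topology_bodyregion} and Theorem~\ref{t:finite_top_bodyregion}.
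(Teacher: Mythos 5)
Your overall template --- a Vitali-type covering iterated down scales, with regularity of body pieces coming from Lemma~\ref{l:cross_section} plus Remark~\ref{r:regularityonneck} and neck diffeomorphisms from Proposition~\ref{p:neck_diff} --- matches the paper's strategy, but the specific decomposition mechanism you propose is not the one the paper uses and has genuine gaps. The paper does not split body from neck according to whether $\Gamma_x$ is trivial. Its body pieces sit perfectly well over nontrivial cone groups: the Reifenberg control of Remark~\ref{r:regularityonneck} comes from being a definite distance from the vertex of a smooth cone $C(\mathbb{S}^{n-1}/\Gamma)$, not from $\Gamma=\{1\}$; and its necks are allowed to have $\Gamma=\{1\}$. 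Instead the paper runs a stopping-time construction directly on $\cV$: it covers $M$ by a Vitali family $\{B_{r_i}(x_i)\}$ at pinched scales supplied by Lemma~\ref{l:cone_structure}, defines inside each ball a singular scale $s_i$ at which the $\delta$-pinching of $\cV$ first fails on a small sub-ball centered near $x_i$, declares $\cB^1$ to be the complement of the balls $\bar B_{2r_i/25}(z_i)$, declares the necks to be the annuli $B_{r_i/4}(z_i)\setminus\bar B_{2s_i}(z_i)$ (on which $\cV$ stays $\delta$-pinched through all intermediate scales, so Proposition~\ref{p:neck_diff} applies), and then recurses on the inner balls $B_{3s_i}(z_i)$. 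Incidentally, your auxiliary claim that ``the cone group is uniformly trivial on $\cB^1$ by the connectedness/rigidity argument of Lemma~\ref{l:cross_section}'' does not follow from that lemma; that argument varies the scale at a fixed base point, not the base point, and in the paper's construction no such triviality is needed or true.

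The concrete gaps are in the counts (5). To bound $k$ you assert that ``crossing each neck drops $\cV$ by a definite amount $c(n)>0$'' and that ``the groups strictly shrink across necks,'' and to bound $N_\ell$ you assert that each neck carries a definite portion $\epsilon_0(n)$ of $\int_M|\Rm|^{n/2}$. None of this is proved here or supplied by the lemmas you cite; these are essentially the content of the Anderson--Cheeger neck theorem and the Cheeger--Naber codimension-$4$ machinery, which this paper is designed to avoid. Indeed a neck with $\Gamma=\{1\}$ need carry no curvature, and with only a lower Ricci bound there is no a priori reason the cone group must strictly drop across a transition region. The paper obtains (5) by elementary monotonicity instead: by the definition of the singular scale, each stopping event drops $\cV$ at the relevant point by at least $\delta=\delta(n,\rv,\Lambda)$ (the $\delta$ fixed in Proposition~\ref{p:neck_diff}), and since $\cV_0\equiv 0$ and $\cV_D\le C(n,\rv,D)$ one gets $k\le L(n,\rv,\delta,D)$; meanwhile $N_\ell$ is bounded purely by the multiplicity of the Vitali covering, since Lemma~\ref{l:cone_structure} forces all pinched scales to be $\ge\kappa(n,\rv,\delta)$ times the ambient scale while $\Ric\ge-(n-1)$, $\Vol(M)\ge\rv$, $\diam(M)\le D$ bound the number of disjoint such balls. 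To make your version work you would have to establish the group-shrinking and curvature-concentration claims separately, which is a substantially harder route than the one the paper takes.
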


\begin{proof}
For any given $\delta>0$, we have  $\sup_{x\in M}|\cV_0(x)-\cV_D(x)|\le L\delta$ for some integer $L=L(n,\rv,\delta,D)$. We will construct the decomposition inductively on $L$. Let us now begin the construction and reduce the upper bound $L\delta$ to $(L-1)\delta$.

By Lemma \ref{l:cone_structure}, we have that for any $x\in M$ there exists $1\ge r_x\ge r(n,\rv,\delta)$ that $|\cV_{r_x/100}(x)-\cV_{r_x}(x)|\le \delta$. Choose a Vitali covering $\{B_{r_i^1}(x_i^1)\}_{i=1}^{N_2}$ of $M$ such that 
\begin{itemize}
\item[(a1)] $M=\cup_{i=1}^{N_2}B_{r_i^1}(x_i^1)$ with $r_i^1=\tfrac{r_{x_i^1}}{2}$.
\item[(a2)] $B_{r_i^1/5}(x_i^1)\cap B_{r_j^1/5}(x_j^1)=\emptyset$ for $i\ne j$.
\item[(a3)]  $N_2\le C(n,\rv,\delta,D)$
\end{itemize}

Define the singular scale  for each $y\in \bar B_{r_i^1/50}(x_i^1)$ by 
\begin{align}
s_{y,i}^1=\inf\{s\le r_i^1: \inf_{x\in B_{s/4}(y)}|\cV_{s}(x)-\cV_{2r_i^1}(x)|\ge \delta\}.
\end{align}
By the choice of $x_i^1$ and $r_i^1$ we have $s_{x_i^1,i}^1\le r_i^1/50$. Let $y_i^1\in \bar B_{r_i^1/50}(x_i^1)$ such that 
\begin{align}
s_{y_i^1,i}^1=\inf_{y\in \bar B_{r_i^1/50}(x_i^1)} s_{y,i}^1\le r_i^1/50.
\end{align}
Denote $s_{i}^1=s_{y_i^1,i}^1$. By the definition of the singular scale, we  have the following.
\begin{itemize}
\item[(b1)] There exists $z_i^1\in \bar B_{s_i^1/4}(y_i^1)$ such that $|\cV_{s_i^1}(z_i^1)-\cV_{2r_i^1}(z_i^1)|=\delta$ 
\item[(b2)] For any $x\in \bar B_{r_i^1/50}(x_i^1)$, we have $|\cV_{s_i^1}(x)-\cV_{2r_i^1}(x)|\ge \delta$.
\item[(b3)] $ B_{r_i^1/25}(z_i^1)\subset  B_{r_i^1/10}(x_i^1)$. 
\end{itemize}
Therefore, by (a2) and (b3), we have $B_{r_i^1/25}(z_i^1)\cap B_{r_j^1/25}(z_j^1)=\emptyset$. Define $\cB^1=M\setminus \cup_{i=1}^{N_2}\bar B_{2r_i^1/25}(z_i^1)$ and $\hat{\cN}_{i}^2=B_{r_i^1/4}(z_i^1)\setminus \bar B_{2s_{i}^1}(z_i^1)$.  Hence we arrive at the following decomposition:
\begin{align}
M^n\subset \cB^1\cup \bigcup_{j_2=1}^{N_2}\hat{\cN}_{j_2}^2 \cup \bigcup_{j_2=1}^{N_2} B_{3s_j^1}(z_i^1),
\end{align}
such that 
\begin{itemize}
\item[($1_1$)] $\cB^1\cap \hat{\cN}^2_i=B_{r_i^1/4}(z_i^{1})\setminus \bar B_{2r_i^1/25}(z_i^1)$ and for any $x\in \cB^1$, there exists $z_i^1$ such that $x\in B_{r_i^1}(z_i^1)\setminus \bar B_{2r_i^1/25}(z_i^1)$ with $|\cV_{r_i^1/50}(z_i^1)-\cV_{2r_i^1}(z_i^1)|\le \delta$ and $r_i^1\ge r(n,\rv,\delta,D) \diam(\cB^1)$.
\item[($2_1$)] $\hat{\cN}_i^2\subset B_{2r_i^1}(z_i^1)\setminus \bar B_{s_{i}^1}(z_i^1)$ satisfies $|\cV_{s_i^1}(z_i^1)-\cV_{2r_i^1}(z_i^1)|=\delta$.
\item[($3_1$)] For any $\sup_{x\in B_{2s_i^1}(z_i^1)}|\cV_{0}(x)-\cV_{s_i^1}(x)|\le (L-1)\delta$.
\item[($4_1$)] $N_2\le C(n,\rv,\delta,D)$.
\end{itemize}

Let us now decompose the ball $B_{2s_i^1}(z_i^1)$ using similar argument as above. For any $x\in B_{2s_i^1}(z_i^1)$, by Lemma \ref{l:cone_structure} there exists radius $s_i^1/4\ge r_x^2\ge r(n,\rv,\delta)s_i^1$ such that $|\cV_{r_x^2/100}(x)-\cV_{r_x^2}(x)|\le \delta$.
Choose a Vitali covering $\{B_{r_{\alpha,i}^2}(x_{\alpha,i}^2)\}_{{\alpha,i}=1}^{N_2'}$ of $B_{2s_i^1}(z_i^1)$ such that 
\begin{itemize}
\item[(a1')] $B_{2s_i^1}(z_i^1)\subset \cup_{{\alpha}=1}^{N_2'}B_{r_{\alpha,i}^2}(x_{\alpha,i}^2)$ with $r_{\alpha,i}^2:=\tfrac{r^2_{x_{\alpha,i}^2}}{2}$ and $x_{\alpha,i}^2\in B_{2s_i^1}(z_i^1)$.
\item[(a2')] $B_{r_{\alpha,i}^2/5}(x_{\alpha,i}^2)\cap B_{r_{\beta,i}^2/5}(x_{\beta,i}^2)=\emptyset$ for ${\alpha}\ne \beta$.
\item[(a3')]  $N_2'\le C(n,\rv,\delta,D)$
\end{itemize}
Define the singular scale  for each $y\in \bar B_{r_{\alpha,i}^2/50}(x_{\alpha,i}^2)$ by 
\begin{align}
s_{y,{\alpha,i}}^2=\inf\{s\le r_{\alpha,i}^2: \inf_{x\in B_{s/4}(y)}|\cV_{s}(x)-\cV_{2r_{\alpha,i}^2}(x)|\ge \delta\}.
\end{align}
Using the same argument as above, we have $y_{\alpha,i}^2,s_{\alpha,i}^2,z_{\alpha,i}^2$ satisfying
\begin{itemize}
\item[(b1')] $y_{\alpha,i}^2\in   \bar B_{r_{\alpha,i}^2/50}(x_{\alpha,i}^2)$, $z_{\alpha,i}^2\in \bar B_{s_{\alpha,i}^2/4}(y_{\alpha,i}^2)$ such that $|\cV_{s_{\alpha,i}^2}(z_{\alpha,i}^2)-\cV_{2r_{\alpha,i}^2}(z_{\alpha,i}^2)|=\delta$ 
\item[(b2')] For any $x\in \bar B_{r_{\alpha,i}^2/50}(x_{\alpha,i}^2)$, we have $|\cV_{s_{\alpha,i}^2}(x)-\cV_{2r_{\alpha,i}^2}(x)|\ge \delta$.
\item[(b3')] $ B_{r_{\alpha,i}^2/25}(z_{\alpha,i}^2)\subset  B_{r_{\alpha,i}^2/10}(x_{\alpha,i}^2)$. 
\end{itemize}
Therefore, by (a2') and (b3'), we have $B_{r_{\alpha,i}^2/25}(z_{\alpha,i}^2)\cap B_{r_{\beta,i}^2/25}(z_{\beta,i}^2)=\emptyset$. Define $\cB^2_i=B_{3s_i^1}(z_i^1)\setminus \cup_{{\alpha,i}=1}^{N_2'}\bar B_{2r_{\alpha,i}^2/25}(z_{\alpha,i}^2)$ and $\hat{\cN}_{{\alpha,i}}^3=B_{r_{\alpha,i}^2/4}(z_{\alpha,i}^2)\setminus \bar B_{2s_{{\alpha,i}}^2}(z_{\alpha,i}^2)$.  Hence we arrive at the following decomposition:
\begin{align}
M^n\subset \cB^1\cup \bigcup_{j_2=1}^{N_2}\hat{\cN}_{j_2}^2 \cup \bigcup_{j_2=1}^{N_2}\cB_{j_2}^2\cup \bigcup_{j_2=1}^{N_2}\bigcup_{{\alpha}=1}^{N_2'}\hat{\cN}_{{\alpha,j_2}}^3\cup \bigcup_{j_2=1}^{N_2}\bigcup_{{\alpha}=1}^{N_2'} B_{3s_{\alpha,j_2}^2}(z_{\alpha,j_2}^2).
\end{align}
Rewrite it as 
\begin{align}
M^n\subset \cB^1\cup \bigcup_{j_2=1}^{N_2}\hat{\cN}_{j_2}^2 \cup \bigcup_{j_2=1}^{N_2}\cB_{j_2}^2\cup \bigcup_{j_3=1}^{N_3}\hat{\cN}_{j_3}^3\cup \bigcup_{j_3=1}^{N_3} B_{3s_{j_3}^2}(z_{j_3}^2).
\end{align}
Moreover, the decomposition satisfies
\begin{itemize}
\item[($1_2$)] $\cB^{\ell}_{j}\cap \hat{\cN}^{\ell+1}_i=B_{r_i^\ell/4}(z_i^{\ell})\setminus \bar B_{2r_i^\ell/25}(z_i^\ell)$ or empty, for any $x\in \cB^\ell_i$ there exists $z_i^\ell$ such that $x\in B_{r_i^\ell}(z_i^\ell)\setminus \bar B_{2r_i^\ell/25}(z_i^\ell)$ with $|\cV_{r_i^\ell/50}(z_i^\ell)-\cV_{2r_i^\ell}(z_i^\ell)|\le \delta$ and $r(n,\rv,\delta,D)\diam(\cB^\ell_i)\le r_i^\ell$  for $\ell=1,2$.
\item[($2_2$)] $\hat{\cN}_i^\ell\subset B_{2r_i^{\ell-1}}(z_i^{\ell-1})\setminus \bar B_{s_{i}^{\ell-1}}(z_i^{\ell-1})$, $\hat{\cN}_i^\ell\cap \cB_i^\ell= B_{3s_i^{\ell-1}}(z_i^{\ell-1})\setminus \bar B_{2s_{i}^{\ell-1}}(z_i^{\ell-1})$ satisfies $|\cV_{s_i^{\ell-1}}(z_i^{\ell-1})-\cV_{2r_i^{\ell-1}}(z_i^{\ell-1})|=\delta$ for $\ell=2,3$.
\item[($3_3$)] $\sup_{x\in B_{2s_{j}^2}(z_{j}^2)}|\cV_{0}(x)-\cV_{s_{j}^2}(x)|\le (L-2)\delta$.
\item[($4_4$)] $N_2,N_3\le C(n,\rv,\delta,D)$.
\end{itemize}
Proceeded inductively as above $k\le L$ times, we will get 

\begin{align}
M^n=\cB^1\cup \bigcup_{j_2=1}^{N_2}\hat{\cN}_{j_2}^2\cup \bigcup_{j_2=1}^{N_2}\cB_{j_2}^2\cup \cdots \cup \bigcup_{j_k=1}^{N_k}\hat{\cN}_{j_k}^k\cup \bigcup_{j_k=1}^{N_k}\cB_{j_k}^k.
\end{align}
such that 
\begin{itemize}
\item[($1_k$)] $\cB^{\ell}_{j}\cap \hat{\cN}^{\ell+1}_i=B_{r_i^\ell/4}(z_i^{\ell})\setminus \bar B_{2r_i^\ell/25}(z_i^\ell)$ or empty, for any $x\in \cB^\ell_i$ there exists $z_i^\ell$ such that $x\in B_{r_i^\ell}(z_i^\ell)\setminus \bar B_{2r_i^\ell/25}(z_i^\ell)$ with $|\cV_{r_i^\ell/50}(z_i^\ell)-\cV_{2r_i^\ell}(z_i^\ell)|\le \delta$ and $r(n,\rv,\delta,D)\diam(\cB^\ell_i)\le r_i^\ell$  for $\ell=1,2,\cdots, k-1$.
\item[($2_k$)] $\hat{\cN}_i^\ell\subset B_{2r_i^{\ell-1}}(z_i^{\ell-1})\setminus \bar B_{s_{i}^{\ell-1}}(z_i^{\ell-1})$, $\hat{\cN}_i^\ell\cap \cB_i^\ell= B_{3s_i^{\ell-1}}(z_i^{\ell-1})\setminus \bar B_{2s_{i}^{\ell-1}}(z_i^{\ell-1})$ satisfies $|\cV_{s_i^{\ell-1}}(z_i^{\ell-1})-\cV_{2r_i^{\ell-1}}(z_i^{\ell-1})|=\delta$ for $\ell=2,3,\cdots, k$.
\item[($3_k$)]  $\sup_{x\in \cB_{j_k}^k}|\cV_{0}(x)-\cV_{s_{j_k}^k}(x)|\le \delta$ where $s_{j_k}^k\ge  \diam(\cB_{j_k}^k)/10$.
\item[($4_k$)] $N_2,N_3,\cdots, N_k\le C(n,\rv,\delta,D)$.
\item[($5_k$)] $k\le L=L(n,\rv,\delta,D)$.
\end{itemize}
Now the theorem follows directly from Lemma \ref{l:Reifenberg_Colding} and Proposition \ref{p:neck_diff} by fixing small $\delta=\delta(n,\rv,\Lambda)$ and choosing $\cN_i^\ell\subset \hat{\cN}_i^\ell$ by Proposition \ref{p:neck_diff} such that $\cN_i^\ell$ is diffeomorphic to $ \mathbb{R} \times \mathbb{S}^{n-1}/\Gamma_{i}^{\ell}$. See also Remark \ref{r:regularityonneck} for the regularity on the neck. 
\end{proof}

\subsection{Proving Theorem \ref{t:maintheorem}}

Let $(M^n,g)\in \mathcal{M}(n,\rv,D,\Lambda)$. By Theorem \ref{t:decomposition}, there exists a decomposition 
\begin{align}
M^n=\cB^1\cup \bigcup_{j_2=1}^{N_2}\cN_{j_2}^2\cup \bigcup_{j_2=1}^{N_2}\cB_{j_2}^2\cup \cdots \cup \bigcup_{j_k=1}^{N_k}\cN_{j_k}^k\cup \bigcup_{j_k=1}^{N_k}\cB_{j_k}^k.
\end{align}
 such that each $\cB$ and $\cN$ are open and satisfy the following:
 \begin{itemize}
 \item[(1)] If $x\in \cB_j^\ell$, then the Reifenberg radius $r_{Rei}(x)\ge r_0(n,\rv,D,\Lambda) \diam(\cB_j^\ell)$.
 \item[(2)] Each neck $\cN_{j}^\ell$ is diffeomorphic to $\mathbb{R}\times \mathbb{S}^{n-1}/\Gamma_{j}^{\ell}$ where $\Gamma_j^\ell\subset O(n)$ and $|\Gamma_{j}^{\ell}|\le C(n,\rv)$.
 \item[(3)] $\cN_j^\ell\cap \cB_j^\ell$ is diffeomorphic to $\mathbb{R}\times \mathbb{S}^{n-1}/\Gamma_{j}^{\ell}$.
 \item[(4)] $\cB_j^{\ell-1}\cap \cN_j^\ell$ are either empty or diffeomorphic to $\mathbb{R}\times \mathbb{S}^{n-1}/\Gamma_{j}^{\ell}$.
 \item[(5)] $N_{\ell}\le N(n,\rv,D,\Lambda)$ and $k\le k(\rv,D,n,\Lambda)$.
 \end{itemize}

By Theorem \ref{t:finite_top_bodyregion}, the body region $\cB_j^\ell$ has at most $C(\rv,n,D,\Lambda)$-diffeomorphism types. By $|\Gamma_j^\ell|\le C(n,\rv)$ and $\Gamma_j^\ell\subset O(n)$, the neck region has at most $C(n,\rv)$-diffeomorphism types. By (3), (4) and $|\Gamma_j^\ell |\le C(n,\rv)$, the intersection component of neck regions and body regions has at most $C(n,\rv)$-diffeomorphism types. From the proof of Theorem \ref{t:decomposition} and the construction of diffeomorphism in Proposition \ref{p:neck_diff}, we can suppose that the induced attaching map between intersection components is sufficiently close to being an isometry of space form $\mathbb{S}^{n-1}/\Gamma_j^\ell$ in pointwise sense. This implies that the induced attaching map is isotopic  to such an isometry. On the other hand, noting that the isometry group of a compact manifold is a Lie group with finitely many component, hence there are only finitely many isotopy classes of such attaching maps for each neck regions. Therefore, there are at most $C(\rv,n,D,\Lambda)$ many ways attaching all the body regions and necks. Hence we get at most $C(\rv,n,D,\Lambda)$-diffeomorphism types. This finishes the proof of the theorem. \qed

\section{Diffeomorphism for K\"ahler manifold}
In this section, we will prove the following finitely many homeomorphism types theorem (Theorem \ref{t:Kahler_homeomorphism}) for K\"ahler surfaces. By using Chern-Gauss-Bonnet formula or Chern-Weil theory, we can get bounded $L^2$ Riemann curvature. Therefore Theorem \ref{t:maintheorem} and Theorem \ref{t:Kahler_homeomorphism} imply the finite diffeomorphism types, Theorem \ref{t:Kahler_diffeomorphism}.

\begin{theorem}\label{t:Kahler_homeomorphism}
Let $(M^4,g)$ be compact K\"ahler manifold with $\Ric\ge -(n-1)$, $\Vol(M)\ge \rv>0$, $\diam(M)\le D$ and $\int_{M}|R|^2\le \Lambda$. Then $(M^4,g)$ has at most $C(\rv,D,\Lambda)$ many homeomorphism types. As a consequence, we have $\int_M|\Rm|^2\le C(\rv,D,\Lambda)$.
\end{theorem}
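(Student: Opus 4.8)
The plan is to reproduce the proof of Theorem~\ref{t:maintheorem} essentially line by line, replacing the volume ratio $\cV_r$ everywhere by the modified monotone quantity $\cVR_r(x)=\cV_r(x)+\int_{B_r(x)}|R|^2$ from \eqref{e:VR montone}. First I would record the elementary properties of $\cVR_r$: in real dimension $4$ the hypothesis $\Ric\ge-3$ is exactly $\Ric\ge-(n-1)$, so Bishop--Gromov makes $r\mapsto\cV_r(x)$ nondecreasing with $\cV_0(x)=0$, while $r\mapsto\int_{B_r(x)}|R|^2$ is trivially nondecreasing; hence $\cVR_r$ is nondecreasing in $r$, $\cVR_0(x)=0$, and $\cVR_1(x)\le C(\rv,\Lambda)$ using the volume lower bound and $\int_M|R|^2\le\Lambda$. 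With monotonicity and this total bound, the pigeonhole argument of Lemma~\ref{l:cone_structure} applies verbatim to $\cVR$: for every $\delta>0$ there is $\kappa(\rv,\Lambda,\delta)>0$ so that every $x\in M$ has, at every scale $r\le1$, a comparable scale $\kappa r\le r_x\le r$ with $|\cVR_{r_x/100}(x)-\cVR_{r_x}(x)|\le\delta$. The point of the modified quantity is that pinching of $\cVR$ forces \emph{simultaneously} the pinching of the volume ratio (hence, via Theorem~\ref{t:Almostmetriccone}, an almost-metric-cone structure there) and the smallness of the localized $L^2$ scalar curvature on the corresponding annular region --- the two inputs needed for the cross-section analysis below.

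The core step is the cross-section analysis, Lemma~\ref{l:cross_section_Kahler}. Assume $|\cVR_{r/2}(p)-\cVR_2(p)|\le\delta$; then $\int_{A_{r/2,2}(p)}|R|^2\le\delta$, and for $r\le s\le1$ the ball $B_s(p)$ is $\epsilon(\delta)$-Gromov--Hausdorff close to a metric cone $C(Z_s)$ with vertex $p$. Feeding this into the $\epsilon$-regularity Proposition~\ref{p:eps_Kahler} for $4$-dimensional Kähler manifolds with small $L^2$ scalar (equivalently, by the pointwise lower Ricci bound, small $L^2$ Ricci) curvature, and running a blow-up/contradiction argument as in Lemma~\ref{l:cross_section}, I would conclude that each cross section $Z_s$ is a smooth closed $3$-manifold which, rescaled to unit size, has Reifenberg radius bounded below by a universal constant and volume bounded below by the noncollapsing constant. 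Since the family $\{Z_s\}_{r\le s\le1}$ is connected in the Gromov--Hausdorff topology and each member is uniformly Reifenberg, Proposition~\ref{p:homeo_XY} forces all of them to be homeomorphic to a single $Z$, and Lemma~\ref{l:Reifenberg_Colding} then gives every point of the punctured annulus a definite Reifenberg-radius lower bound. The main obstacle of the paper sits precisely here: lacking a two-sided Ricci bound one cannot appeal to a harmonic-radius $\epsilon$-regularity, so the smoothness and uniform Reifenberg control of the cross section must be wrung out of the $L^2$ scalar-curvature smallness through the Kähler-specific $\epsilon$-regularity, and one must propagate this uniformly along the entire cone family.

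Granting Lemma~\ref{l:cross_section_Kahler}, the neck theorem is the analogue of Proposition~\ref{p:neck_diff}: when $|\cVR_{r/2}(p)-\cVR_2(p)|\le\delta$ with $\delta$ small there is a homeomorphism $F\colon Z\times(r/2,1/2)\to\cN$ onto a neighborhood $\cN$ of the central annulus $A_{11r/20,9/20}(p)$, with $Z$ the topologically fixed, uniformly Reifenberg cross section. I would build $F$ by the Cheeger--Colding Reifenberg gluing used in Lemma~\ref{l:finite_topology_bodyregion} and Proposition~\ref{p:neck_diff}, carried out on dyadic subannuli; the one ingredient beyond the Riemannian case is a deformation step to patch the local homeomorphisms obtained at successive scales into one global homeomorphism of the neck, which is forced on us because $Z$ is now a general smooth $3$-manifold rather than a space form with transparently understood isometries. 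Then I would run the Cheeger--Naber iterated decomposition exactly as in Theorem~\ref{t:decomposition}, with $\cVR$ in place of $\cV$: each pinching produces a neck $\cN$ homeomorphic to $\mathbb{R}\times Z$, the leftover body regions $\cB_j^\ell$ satisfy $r_{Rei}(x)\ge r_0(\rv,D,\Lambda)\diam(\cB_j^\ell)$, and the global bound $\sup_{x}|\cVR_0(x)-\cVR_D(x)|\le C(\rv,D,\Lambda)$ caps the number of necks per step and the number of steps by constants depending only on $\rv,D,\Lambda$. Theorem~\ref{t:finite_top_bodyregion} gives finitely many homeomorphism types for the body regions, the cross sections $Z$ lie in finitely many homeomorphism types, and the attaching maps along intersection pieces are by construction close to isometries of $Z$, hence fall into finitely many isotopy classes (the isometry group of the closed manifold $Z$ being a compact Lie group with finitely many components). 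Assembling the finitely many gluings, $M^4$ has at most $C(\rv,D,\Lambda)$ homeomorphism types.

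For the consequence $\int_M|\Rm|^2\le C(\rv,D,\Lambda)$: finitely many homeomorphism types forces the Euler characteristic to take finitely many values, so $|\chi(M)|\le C(\rv,D,\Lambda)$. From $\Ric\ge-3$ the tensor $\Ric+3g$ is positive semidefinite, so $|\Ric+3g|^2\le(\tr(\Ric+3g))^2=(R+12)^2$, which after expanding gives the pointwise bound $|\Ric|^2\le R^2+18R+108$; integrating and using $\Vol(M)\le\Vol_{-1}(B_D)$ together with $\int_M R^2\le\Lambda$ and Cauchy--Schwarz bounds $\int_M|\Ric|^2\le C(D,\Lambda)$, hence also $\int_M|\Ric-\tfrac{R}{4}g|^2\le C(D,\Lambda)$. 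Plugging into the Chern--Gauss--Bonnet identity $8\pi^2\chi(M)=\int_M\big(|W|^2-\tfrac12|\Ric-\tfrac{R}{4}g|^2+\tfrac1{24}R^2\big)$ bounds $\int_M|W|^2$ by $C(\rv,D,\Lambda)$, and the pointwise orthogonal decomposition of $\Rm$ into its Weyl, traceless-Ricci and scalar parts (so that $|\Rm|^2\le C(|W|^2+|\Ric|^2+R^2)$) yields $\int_M|\Rm|^2\le C(\rv,D,\Lambda)$, which closes the argument and, combined with Theorem~\ref{t:maintheorem}, gives Theorem~\ref{t:Kahler_diffeomorphism}.
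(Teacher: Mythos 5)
Your proposal follows the paper's proof essentially step for step: the modified monotone quantity $\cVR_r$, the K\"ahler $\epsilon$-regularity producing smooth, uniformly Reifenberg cross sections that are all homeomorphic to one another, the deformation-lemma gluing of the scale-by-scale maps into a single neck homeomorphism, the Cheeger--Naber style decomposition with $\cVR$ in place of $\cV$, and Chern--Gauss--Bonnet for the $L^2$ curvature bound. The only divergence is that you write out the Gauss--Bonnet computation explicitly where the paper leaves it as a one-line remark; the argument is correct.
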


Under the condition of Theorem \ref{t:Kahler_homeomorphism}, we don't know whether every tangent cone of the limit space of such sequence is unique and flat. Instead, we can prove that every tangent cone is a cone with  smooth cross section and all cross sections have the same topology. This is good enough for us to glue local homeomorphisms to get a global homeomorphism on the neck region.

In the proof of Theorem \ref{t:Kahler_homeomorphism}, comparing with Theorem \ref{t:maintheorem}, the main difference is the cross section Lemma \ref{l:cross_section} and neck diffeomorphism Proposition \ref{p:neck_diff}. Let us begin analogous lemmas for the proof of Theorem \ref{t:Kahler_homeomorphism}.

\subsection{$\epsilon$-regularity for K\"ahler manifolds.}
The following $\epsilon$-regularity could be found in \cite{Cheeger} 
and Lemma 5.2 of Tian-Wang \cite{TianWang}.   Actually, by Cheeger-Colding-Tian's splitting theorem in \cite{CCTi_eps_reg} for K\"ahler manifold, if a ball is close to a splitting ball with Euclidean fact $\mathbb{R}^{n-3}$, then the ball must be close to ball splitting $\mathbb{R}^{n-2}$ and $n-2$ is even. Hence we have
\begin{lemma}[\cite{Cheeger,TianWang}]\label{l:eps_regular_Kahler}
Let $(M^n,g,p)$ be a K\"ahler manifold with real dimension $n$ and $\Vol(B_1(p))\ge \rv>0$. For any $\epsilon>0$ if $\delta\le \delta(n,\rv,\epsilon)$, $\Ric\ge -\delta$, \ $\fint_{B_2(p)}|\Ric|\le \delta$ and 
\begin{align}
d_{GH}(B_2(p), B_2(0^{n-3},y_c))\le \delta,~~~\text{ $(0^{n-3},y_c)$ is a cone vertex of metric cone $\mathbb{R}^{n-3}\times C(Y)$},
\end{align}
then 
\begin{align}
d_{GH}(B_1(p),B_1(0^n))\le \epsilon.
\end{align}

\end{lemma}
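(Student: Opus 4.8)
The plan is to prove Lemma~\ref{l:eps_regular_Kahler}, the $\epsilon$-regularity statement for K\"ahler manifolds: if $(M^n,g,p)$ is K\"ahler with $\Vol(B_1(p))\ge \rv>0$, $\Ric\ge-\delta$, $\fint_{B_2(p)}|\Ric|\le\delta$, and $B_2(p)$ is $\delta$-Gromov--Hausdorff close to the ball about a cone vertex in $\mathbb{R}^{n-3}\times C(Y)$, then $B_1(p)$ is $\epsilon$-close to the Euclidean ball $B_1(0^n)$. I would argue by contradiction and compactness. Suppose the statement fails: there is $\epsilon_0>0$ and a sequence of pointed K\"ahler manifolds $(M_i^n,g_i,p_i)$ with $\Vol(B_1(p_i))\ge\rv$, $\Ric_{g_i}\ge-\delta_i\to 0$, $\fint_{B_2(p_i)}|\Ric_{g_i}|\le\delta_i\to 0$, and $d_{GH}(B_2(p_i),B_2((0^{n-3},y_c^i)))\le\delta_i\to 0$ where $(0^{n-3},y_c^i)$ is a cone vertex of $\mathbb{R}^{n-3}\times C(Y_i)$, yet $d_{GH}(B_1(p_i),B_1(0^n))>\epsilon_0$ for all $i$.

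Passing to a subsequence, $(M_i,g_i,p_i)\to (X,d,p_\infty)$ in the pointed Gromov--Hausdorff sense, and by the volume lower bound together with Colding's volume continuity this limit is non-collapsed, so the Cheeger--Colding structure theory applies. The GH-closeness hypothesis forces the limit $B_2(p_\infty)$ to isometrically split off an $\mathbb{R}^{n-3}$ factor, and since $p_\infty$ maps to a cone vertex, $B_2(p_\infty)$ is itself isometric to a ball about the vertex of $\mathbb{R}^{n-3}\times C(Z)$ for some limit cross section $Z$. The point where the K\"ahler hypothesis enters is exactly here: I would invoke the Cheeger--Colding--Tian splitting theorem for K\"ahler manifolds from \cite{CCTi_eps_reg}, which says that a non-collapsed limit of K\"ahler manifolds with Ricci bounded below (and, here, with $L^1$-Ricci going to zero so the limit is in fact Ricci-flat in the appropriate weak/regular sense) that splits off $\mathbb{R}^{n-3}$ must in fact split off $\mathbb{R}^{n-2}$, the K\"ahler structure being incompatible with an odd-dimensional Euclidean factor in the tangent cone. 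Thus $B_2(p_\infty)$ splits as $\mathbb{R}^{n-2}\times C(W)$ about its vertex with $W$ a one-dimensional cross section, hence $C(W)$ is a two-dimensional metric cone; the non-collapsing volume bound then forces the cone angle to be $2\pi$ (any smaller angle would contradict the volume convergence lower bound inherited from $\Vol(B_1(p_i))\ge\rv$, since a genuine $2$-cone has strictly smaller volume density), so $C(W)=\mathbb{R}^2$ and therefore $B_2(p_\infty)$ is isometric to the Euclidean ball $B_2(0^n)$. In particular $d_{GH}(B_1(p_i),B_1(0^n))\to 0$, contradicting $d_{GH}(B_1(p_i),B_1(0^n))>\epsilon_0$.

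In the write-up I would state the needed facts precisely: (i) volume continuity and non-collapsing of the limit, citing \cite{ChC1} and \cite{Colding}; (ii) that a non-collapsed Ricci limit which splits off $\mathbb{R}^{k}$ near a cone vertex is a metric cone $\mathbb{R}^k\times C(Z)$ there; (iii) the even-dimensionality upgrade $\mathbb{R}^{n-3}\to\mathbb{R}^{n-2}$ for K\"ahler limits from \cite{CCTi_eps_reg} (and its quantitative counterpart, also used in \cite{Cheeger} and in Lemma~5.2 of \cite{TianWang}); and (iv) the rigidity that a non-collapsed two-dimensional metric cone with maximal volume density is $\mathbb{R}^2$. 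The main obstacle — and the only step that is not a routine compactness argument — is step (iii): making sure the hypotheses of the K\"ahler splitting improvement are genuinely met in the limit, in particular that the $L^1$ smallness $\fint_{B_2(p_i)}|\Ric_{g_i}|\le\delta_i$ is enough (rather than a two-sided or $L^p$, $p>1$, bound) to run the Cheeger--Colding--Tian argument and to conclude that the limit cone is a regular point with a well-defined complex structure on its smooth part. I expect to handle this by noting that in the non-collapsed setting the limit $X$ has a dense regular set on which $g_\infty$ is a smooth Ricci-flat K\"ahler metric (again by \cite{CCTi_eps_reg}), and that the splitting $\mathbb{R}^{n-3}$-factor together with the complex structure on the regular part of $C(Z)$ propagates holomorphically to give the $\mathbb{R}^{n-2}$-splitting, exactly as in the cited references; once that is in place the remaining two-dimensional cone rigidity is standard.
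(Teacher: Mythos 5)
Your overall strategy — a compactness/contradiction argument: pass to a noncollapsed Gromov--Hausdorff limit, promote the $\mathbb{R}^{n-3}$-splitting cone structure to an $\mathbb{R}^{n-2}$-splitting via the Cheeger--Colding--Tian K\"ahler upgrade, and then try to show the remaining two-dimensional cone factor is $\mathbb{R}^2$ — is the right skeleton, and it matches what the paper gestures at when it invokes \cite{CCTi_eps_reg} and cites \cite{Cheeger,TianWang}. The step you identify as the ``main obstacle'' (running the K\"ahler splitting improvement under an $L^1$-Ricci smallness rather than a two-sided pointwise Ricci bound) is indeed a point that needs care, but it is not where the argument actually breaks. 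The real gap is your final step.

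You claim that once the limit has the form $\mathbb{R}^{n-2}\times C(W)$ with $W$ a circle of some length $\ell\le 2\pi$, the volume lower bound $\Vol(B_1(p_i))\ge\rv$ forces $\ell=2\pi$. This is false. The hypothesis $\rv>0$ is an arbitrary noncollapsing constant, not assumed close to the Euclidean volume $\omega_n$. A cone $\mathbb{R}^{n-2}\times C(S^1_\ell)$ with $\ell<2\pi$ has volume density $\ell/2\pi<1$ at the vertex, but still strictly positive, so it is perfectly compatible with $\Vol(B_1)\ge\rv$ whenever $\rv$ is small enough. Bishop--Gromov monotonicity only tells you the density is bounded below by $\rv/\omega_n$, and that bound can be far from $1$. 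So the ``maximal volume density'' you invoke in step (iv) is not available from the noncollapsing hypothesis alone, and the contradiction you draw does not hold. Excluding the codimension-$2$ cone singularity is precisely the delicate part of the $\epsilon$-regularity in \cite{Cheeger} and Lemma 5.2 of \cite{TianWang}: it uses the K\"ahler structure again in an essential way (integration of the K\"ahler form and the Ricci/Chern form over holomorphic $2$-slices transverse to the $\mathbb{R}^{n-2}$-factor, a Gauss--Bonnet-type defect balance), combined with the smallness of $\fint_{B_2}|\Ric|$, to show that a cone-angle deficit $2\pi-\ell>0$ would force a definite amount of Ricci in $L^1$, contradicting $\delta\to 0$. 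Without some version of that argument your proof does not reach the Euclidean conclusion; it only gets to $\mathbb{R}^{n-2}\times C(S^1_\ell)$ for an undetermined $\ell$.

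A secondary remark: your parenthetical that the $L^1$-Ricci smallness makes the limit ``Ricci-flat in the appropriate weak/regular sense'' with smooth regular set should also be softened. Under only $\Ric\ge-\delta_i$ and $\fint|\Ric|\le\delta_i$ the Cheeger--Colding regularity of the limit on the regular set is $C^{1,\alpha}$ at best, not automatically smooth Ricci-flat; the even-codimension splitting upgrade for K\"ahler manifolds does not actually need such smoothness (it uses the almost-parallel vector fields together with the complex structure $J$, which goes through with just the lower Ricci bound), but it is worth stating that correctly rather than relying on smoothness of the regular set.
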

As a direct consequence, we have
\begin{corollary}\label{c:reifenberglowerKahler}
Let $(M^4,g,p)$ be a K\"ahler manifold with lower Ricci curvature $\Ric\ge -\delta$ and $\Vol(B_1(p))\ge \rv>0$. For any $\epsilon>0$ if $\delta \le\delta(\rv,\epsilon)$ and $\int_{B_2(p)}|R|^2\le \delta$ and 
\begin{align}
d_{GH}(B_2(p),B_2(\bar p))\le \delta,  \text{ with metric cone $(C(X),\bar p)$}
\end{align}
then the Reifenberg radius $r_{Rei,\epsilon}(X)\ge r_0(\rv,\epsilon)>0$, where the Reifenberg radius is defined in Definition \ref{d:DefReifenbergradius}.
\end{corollary}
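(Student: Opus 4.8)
The plan is to argue by contradiction together with Gromov compactness, reduce to a limit metric cone, and feed Lemma~\ref{l:eps_regular_Kahler} into the Cheeger--Colding stratification of that cone; the only genuinely new input is a scale‑invariance remark that makes the $\epsilon$‑regularity applicable at \emph{every} scale. So suppose the statement fails: there are $\epsilon_0>0$ and K\"ahler $4$‑manifolds $(M_j^4,g_j,p_j)$ with $\Ric_{g_j}\ge-\delta_j$, $\Vol(B_1(p_j))\ge\rv$, $\int_{B_2(p_j)}|R_j|^2\le\delta_j$, $\delta_j\to0$, and $d_{GH}(B_2(p_j),B_2(\bar p_j))\le\delta_j$ with $\bar p_j$ the vertex of a metric cone $C(X_j)$, but $r_{Rei,\epsilon_0,3}(X_j)\to0$. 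Passing to a subsequence, $B_2(p_j)\to B_2(\bar p_\infty)\subset C(X_\infty)$ and $X_j\to X_\infty$ in the Gromov--Hausdorff sense, where $C(X_\infty)$ is, on $B_2$, a noncollapsed metric cone ($\Vol(B_1(\bar p_\infty))\ge\rv$ by volume convergence) arising as a limit of K\"ahler manifolds with $\Ric\to0$; moreover $\Ric_{g_j}\ge-\delta_j$ and $\int_{B_2(p_j)}|R_j|^2\le\delta_j$ force $\int_{B_2(p_j)}|\Ric_{g_j}|^2\le C\delta_j\to0$, so there is no curvature concentration in the limit.

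The key point is that in real dimension $4$ the quantity $\int|\Ric|^2$ is scale invariant, so the renormalized hypothesis of Lemma~\ref{l:eps_regular_Kahler} holds at all scales: for $x\in B_1(p_j)$ and $0<s\le1$, setting $\tilde g=s^{-2}g_j$ and using Bishop--Gromov noncollapsing $\Vol(B_{2s}(x))\ge c\rv s^4$ with $\int_{B_{2s}(x)}|\Ric_{g_j}|^2\le\int_{B_2(p_j)}|\Ric_{g_j}|^2\le C\delta_j$, one gets $\tfrac{1}{\Vol_{\tilde g}(B_2(x))}\int_{B_2(x)}|\Ric_{\tilde g}|\le\big(\tfrac{1}{\Vol(B_{2s}(x))}\int_{B_{2s}(x)}|\Ric_{g_j}|^2\big)^{1/2}\le C(\delta_j/\rv)^{1/2}\to0$, uniformly in $s$. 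Hence the hypotheses of Lemma~\ref{l:eps_regular_Kahler} (small Ricci lower bound, small renormalized $L^1$ of $\Ric$, and $B_2$ close to a ball about the cone vertex of $\mathbb{R}^{1}\times C(Y)$) are met by suitable rescalings of the $M_j$ along any sequence of scales converging to any prescribed (weak) tangent cone of $C(X_\infty)$. Running the standard Cheeger--Colding stratification for $C(X_\infty)$ on $B_1$ (as in \cite{ChC1,CheegerNaber_codimension4}) with Lemma~\ref{l:eps_regular_Kahler} in the role of the $\epsilon$‑regularity theorem, every point of $C(X_\infty)\cap B_1$ whose tangent cone splits off an $\mathbb{R}^{1}$ factor (i.e.\ every point outside the top‑codimension stratum $\mathcal{S}^0$) is in fact regular, with Euclidean tangent cone; therefore $\mathcal{S}(C(X_\infty))\cap B_1\subset\mathcal{S}^0$, which is at most $0$‑dimensional.

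Since $C(X_\infty)$ is a metric cone on $B_1$ with vertex $\bar p_\infty$, its singular set there equals $\{\bar p_\infty\}$ together with the cone over $\mathrm{Sing}(X_\infty)$; discreteness of the singular set then forces $\mathrm{Sing}(X_\infty)=\emptyset$, i.e.\ every $z$ in the compact space $X_\infty$ has $\mathbb{R}^3$ tangent cone. At such a $z$ the point $(1,z)\in C(X_\infty)$ has tangent cone $\mathbb{R}^{1}\times C(Y_z)=\mathbb{R}^4$, so for a (possibly $z$‑dependent) scale $r$ the ball $B_r((1,z))$ is Gromov--Hausdorff close to $B_r(0^4)$; by Lemma~\ref{l:Reifenberg_Colding} (applied on $C(X_\infty)$ as a limit of manifolds with $\Ric\to0$) this yields a positive Reifenberg radius of $C(X_\infty)$ on a neighbourhood of $(1,z)$, and the cone identification $B_s((1,z))\approx B_s$ in $\mathbb{R}\times(X_\infty,d_{X_\infty})$ for small $s$ transfers this to a positive Reifenberg radius of $X_\infty$ at $z$. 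As the sets $\{z\in X_\infty:\ r_{Rei,\epsilon_0/2,3}(z)>t\}$ are open and exhaust the compact space $X_\infty$, we obtain $r_{Rei,\epsilon_0/2,3}(X_\infty)\ge 2r_*>0$; stability of a Reifenberg‑radius lower bound under Gromov--Hausdorff convergence (as in Proposition~\ref{p:homeo_XY}) then gives $r_{Rei,\epsilon_0,3}(X_j)\ge r_*$ for all large $j$, contradicting $r_{Rei,\epsilon_0,3}(X_j)\to0$ and producing the asserted $r_0=r_0(\rv,\epsilon)$.

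The step I expect to be the main obstacle is organizing the interplay of the three scales involved — the scale on which $M_j$ is Gromov--Hausdorff close to the metric cone $C(X_j)$, the cone scale, and the blow‑up scale at which the Reifenberg condition degenerates — so that Lemma~\ref{l:eps_regular_Kahler}, which is a statement about \emph{smooth} K\"ahler manifolds, can legitimately be invoked on rescalings of the $M_j$ converging to the relevant tangent cones of $C(X_\infty)$; it is exactly here that the scale invariance of $\int|\Ric|^2$ in dimension $4$ (hence smallness of the renormalized Ricci at all scales) is indispensable, and where one must pass to diagonal subsequences with care. The remaining points — the stratification dimension estimate, passing between $X_\infty$, $C(X_\infty)$ and the $M_j$, and the final openness/compactness and stability arguments — are routine within Cheeger--Colding theory.
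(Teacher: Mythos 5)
Your proposal differs genuinely from the paper's proof: the paper gives a \emph{direct} argument (no contradiction, no stratification), while you run a contradiction argument through Gromov compactness and the Cheeger--Colding stratification of the limit cone $C(X_\infty)$. The paper's proof is much shorter: pick $q\in\partial B_1(p)$, use Lemma~\ref{l:cone_structure} to find a definite scale $\bar r$ at which $B_{\bar r}(q)$ is almost a cone, use almost-splitting to see the cone splits an $\mathbb{R}$ factor, feed the renormalized $L^1$-Ricci bound into Lemma~\ref{l:eps_regular_Kahler} to conclude $B_{\bar r/2}(q)$ is Euclidean-close, transfer this to balls $B_s(x)\subset X$ at a fixed scale $s$, and then propagate to all smaller scales using Ketterer's result that $\Ric_X\ge 2$ in the RCD sense together with De Philippis--Gigli stability. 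Your scale-invariance observation about $\int|\Ric|^2$ in dimension four is correct and is morally what makes the paper's renormalized $L^1$ estimate work, but the paper never needs to invoke the $\epsilon$-regularity at all scales simultaneously -- one fixed scale plus RCD propagation suffices.

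The genuine gap in your argument is the final transfer-back step. You conclude $r_{Rei,\epsilon_0/2,3}(X_\infty)\ge 2r_*>0$ and then assert that ``stability of a Reifenberg-radius lower bound under Gromov--Hausdorff convergence (as in Proposition~\ref{p:homeo_XY})'' gives $r_{Rei,\epsilon_0,3}(X_j)\ge r_*$ for large $j$. Proposition~\ref{p:homeo_XY} does not say this: it produces a homeomorphism between two spaces that both already come equipped with Reifenberg lower bounds; it does not let you \emph{inherit} a Reifenberg bound from a GH-limit. Indeed, Gromov--Hausdorff closeness at scale $\sim 1$ says nothing a priori about the behavior of $X_j$ at scales $\ll d_{GH}(X_j,X_\infty)$, which is precisely what the Reifenberg radius controls. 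To close this gap you must use structure on $X_j$ itself at arbitrarily small scales -- concretely, that $X_j$ (being a cross section of a noncollapsed almost-RCD cone produced by Theorem~\ref{t:Almostmetriccone}) satisfies $\Ric_{X_j}\ge 2$ in the RCD sense (Ketterer) and that small-scale Euclidean-closeness then propagates downward (De Philippis--Gigli). But once you invoke exactly these tools the contradiction argument and the entire stratification machinery become redundant: you can, as the paper does, argue directly on $X$ by establishing closeness to $\mathbb{R}^3$ at one definite scale and then propagating. The obstacle you flagged (organizing the three scales so that Lemma~\ref{l:eps_regular_Kahler} applies to rescaled $M_j$) is manageable and not the real issue; the missing ingredient is the intrinsic small-scale control of $X_j$ itself.
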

\begin{proof}
 For any $q\in \partial B_1(p)$ and any $\epsilon'>0$, by Lemma \ref{l:cone_structure}, there exists $r>\bar r(\epsilon',\rv)$ that $B_r(q)$ is $\epsilon' r$-close to a metric cone. Noting that there exists a ray from the cone vertex, by the almost splitting Theorem \ref{t:Almostsplitting} if $\delta\le \delta(\epsilon',\rv)$ we have that $B_r(q)$ is $\epsilon' r$-close to a metric cone which splits a factor $\mathbb{R}$. On the other hand, noting that $r^2\fint_{B_r(q)}|R|\le C(\rv)\left(\int_{B_r(q)}|R|^2\right)^{1/2}\le C(\rv)\left(\int_{B_2(p)}|R|^2\right)^{1/2}\le C(\rv)\delta^{1/2}$.  For any $\epsilon''>0$ if $\epsilon'\le \epsilon'(\rv,\epsilon'')$ and $\delta\le \delta(\rv,\epsilon'')$ we have by Lemma \ref{l:eps_regular_Kahler} that 
$B_{r/2}(q)$ is $\epsilon'' r$-close to $B_{r/2}(0^4)$. For any $\eta>0$, if $\delta\le \delta(\rv,\epsilon',\epsilon'',\eta)$,  this implies for some $s\ge s(\rv,\bar r)>0$ and any $x\in X$, 
\begin{align}
d_{GH}(B_s(x),B_s(0^3))\le \eta s, \text{ with $0^3\in \mathbb{R}^3$}.
\end{align}
From \cite{Ketcone} we know that $\Ric_X\ge 2$ in the sense of RCD. Thus by \cite{DPG18} we have for each $0<t<s$ that $B_t(x)\subset X$ is $\eta' t$-close to $B_t(0^3)$ if $\eta\le \eta(\rv,\eta')$. Hence we have finished the proof by choosing $\eta'\le \eta'(\rv,\epsilon)$ and $\delta\le \delta(\rv,\epsilon)$.
\end{proof}

In the proof of Corollary \ref{c:reifenberglowerKahler}, we only need the smallness of $L^2$-Ric on the annulus but not on the whole ball, hence we can directly get the following
\begin{proposition}[$\epsilon$-regularity]\label{p:eps_Kahler}
Let $(M^4,g,p)$ be a K\"ahler manifold with $\Vol(B_1(p))\ge \rv>0$. For any $\epsilon>0$, there exists $\delta(\rv,\epsilon)$ and $r_0(\rv,\epsilon)>0$ such that if $\Ric\ge -\delta$ and 
\begin{itemize}
\item[(1)] $d_{GH}(B_2(p),B_2(x_c))\le \delta$, for some metric cone $(C(X),x_c)$
\item[(2)] $\int_{B_2(p)\setminus \bar B_{1/4}(p)}|R|^2\le \delta$,
\end{itemize}
then for any $x\in B_{3/2}(p)\setminus \bar B_{1/2}(p)$, we have 
\begin{align}
d_{GH}(B_{r_0}(x),B_{r_0}(0^4))\le \epsilon r_0,
\end{align}
and the Reifenberg radius $r_{Rei,\epsilon}(X)\ge r_0(\rv,\epsilon)>0$,
\end{proposition}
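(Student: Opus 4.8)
The plan is to run the proof of Corollary~\ref{c:reifenberglowerKahler} essentially verbatim, with two adjustments: the base points will range over $B_{3/2}(p)\setminus\bar B_{1/2}(p)$ rather than over $\partial B_1(p)$, and one keeps careful track of the fact that every ball invoked in that argument lies inside the annulus $B_2(p)\setminus\bar B_{1/4}(p)$, so that only hypothesis (2) --- the annular $L^2$ bound on $R$ --- is ever used. The extra freedom in the base points also yields the asserted Euclidean closeness of $B_{r_0}(x)$.

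First I would fix $x\in B_{3/2}(p)\setminus\bar B_{1/2}(p)$. Since $\Ric\ge -\delta\ge -3$ and $B_1(p)\subset B_{5/2}(x)$, Bishop--Gromov gives $\Vol(B_s(x))\ge c(\rv)\,s^4$ for all $s\le 1/8$. After rescaling so that $B_2(p)$ becomes a unit ball, Lemma~\ref{l:cone_structure} (with its scale parameter chosen so that the output radius is at most $1/8$) produces, for any prescribed $\epsilon'>0$, a radius $\bar r(\rv,\epsilon')\le r_x\le 1/8$ such that $d_{GH}(B_{r_x}(x),B_{r_x}(\hat x))\le \epsilon' r_x$ for a metric cone $C(Z)$ with vertex $\hat x$. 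The key point is that $B_{r_x}(x)\subset B_{13/8}(p)\setminus B_{3/8}(p)\subset B_2(p)\setminus\bar B_{1/4}(p)$, so hypothesis (2) gives $\int_{B_{r_x}(x)}|R|^2\le\delta$; combining H\"older's inequality with the volume lower bound then gives $s^2\fint_{B_s(x)}|R|\le C(\rv)\delta^{1/2}$ for all $s\le r_x$, and since $\Ric\ge -\delta$ this upgrades to $s^2\fint_{B_s(x)}|\Ric|\le C(\rv)\delta^{1/2}$.

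Next I would split off a line. Because $d(p,x)\ge 1/2$ and $B_2(p)$ is $\delta$-close to the cone $B_2(x_c)\subset C(X)$, the radial geodesic of $C(X)$ through the point corresponding to $x$ has length at least $1/2$ on each side of that point, so near $x$ there is a minimizing geodesic of definite length with $x$ in its interior. Applying the almost-splitting Theorem~\ref{t:Almostsplitting} at a small scale $\epsilon'' r_x$ --- at which this geodesic is long in rescaled coordinates once $\epsilon''$ is small and $\delta\le\delta(\epsilon'',\rv)$ --- shows that $B_{\epsilon'' r_x}(x)$ is close to a metric product $\mathbb{R}\times Y$. Since $B_{r_x}(x)$ is $\epsilon' r_x$-close to the metric cone $C(Z)$ with vertex $\hat x$, and a cone is self-similar about its vertex, this small-scale splitting forces $C(Z)$ itself to split a line; a standard compactness argument then gives that $B_{r_x}(x)$ is $\epsilon'''$-close to $B_{r_x}(0^1,w_c)\subset\mathbb{R}^1\times C(W)$ for some metric space $W$, provided $\delta,\epsilon',\epsilon''$ are small enough. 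Rescaling $B_{r_x/2}(x)$ to unit size and applying Lemma~\ref{l:eps_regular_Kahler} with $n=4$ and Euclidean factor $\mathbb{R}^{n-3}=\mathbb{R}^1$ (whose normalized $L^1$-Ricci hypothesis is exactly the bound from the previous paragraph), I get $d_{GH}(B_{r_x/4}(x),B_{r_x/4}(0^4))\le\epsilon''' r_x$, and then Lemma~\ref{l:Reifenberg_Colding} yields $r_{Rei,\epsilon}(x)\ge r_x/16\ge r_0(\rv,\epsilon):=\bar r(\rv,\epsilon')/16$; in particular $d_{GH}(B_{r_0}(x),B_{r_0}(0^4))\le\epsilon r_0$, which is the first assertion. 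Specializing $x$ to $\partial B_1(p)$, which Gromov--Hausdorff converges to the cross-section $X$, the same conclusion shows that every point of $X$ has a definite-size neighborhood that is $\eta$-close to a ball in $\mathbb{R}^3$; since $C(X)$ is a noncollapsed Ricci limit cone, $X$ satisfies $\Ric_X\ge 2$ in the RCD sense by \cite{Ketcone}, so by \cite{DPG18} this $\eta$-closeness propagates to all smaller scales, giving $r_{Rei,\epsilon}(X)\ge r_0(\rv,\epsilon)$.

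The main difficulty --- already present in Corollary~\ref{c:reifenberglowerKahler} --- is the mismatch between the definite scale $r_x$ of the cone approximation and the necessarily small scale at which the almost-splitting theorem can be invoked; the way around it is precisely the self-similarity of the approximating cone about its vertex, which transports the small-scale splitting back up to scale $r_x$. The only genuinely new point here is the elementary observation that for $x\in B_{3/2}(p)\setminus\bar B_{1/2}(p)$ and $s\le 1/8$ the balls $B_s(x)$ never enter $\bar B_{1/4}(p)$, so hypothesis (2) of the proposition suffices in place of a global $L^2$-curvature bound.
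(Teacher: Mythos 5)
Your argument is correct and follows essentially the same route as the paper: the paper derives Proposition~\ref{p:eps_Kahler} by rerunning the proof of Corollary~\ref{c:reifenberglowerKahler} (cone structure from Lemma~\ref{l:cone_structure}, a line split from the radial direction via Theorem~\ref{t:Almostsplitting}, H\"older plus noncollapsing to make $s^2\fint|\Ric|$ small, then Lemma~\ref{l:eps_regular_Kahler}, and \cite{Ketcone} with \cite{DPG18} for the cross-section), observing exactly as you do that every ball used lies in the annulus so only hypothesis (2) is needed. Your detour of applying almost splitting at the smaller scale $\epsilon'' r_x$ and transporting up by self-similarity is an unnecessary (but harmless) complication, since one may simply take the initial scale in Lemma~\ref{l:cone_structure} small enough that the radial geodesic is already long relative to $r_x$.
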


\subsection{Diffeomorphism of Neck Region}

In this subsection,  we prove the homeomorphism of neck region. In order to do this, we use the {\it modified monotone quantity} \eqref{e:VR montone}: 
\begin{align} 
\cVR_r(x):=\cV_r(x)+\int_{B_r(x)}|R|^{2}, ~~x\in M, r>0.
\end{align}
where $\cV_r(x)=-\log \frac{\Vol(B_r(x))}{\Vol_{-1}(B_r)}$. Note that $\cVR_r(x)$ is monotone with respect to $r$ for any fixed $x\in M$ and $\cVR_0(x)=0$ and $\cVR_1(x)\le C(\rv,\Lambda)$ for any $x\in B_1(p)$.

\begin{lemma}\label{l:cross_section_Kahler}
Let $(M^4,g,p)$ be a K\"ahler manifold with lower Ricci curvature $\Ric\ge -3$ and $\Vol(B_1(p))\ge \rv>0$. For any $\epsilon>0$ there exists $\delta(\epsilon,\rv)$ such that  if 
\begin{align}
|\cVR_{r/2}(p)-\cVR_2(p)|\le \delta
\end{align}
for some $r\le 1$,  then for any $r\le s\le 1$ there exists $3$-smooth manifold $(Z_s,d_s)$ such that
\begin{itemize}
\item[(a)] $d_{GH}(B_s(p),B_s(\bar p_s))\le \epsilon s$, ~~ with cone vertex $\bar p_s\in C(Z_s)$ .
\item[(b)] $r_{Rei}(Z_s,d_s)\ge r_0(n,\rv)>0$.
\item[(c)] the cross section $(Z_s,d_s)$ is diffeomorphic to each other and $Z_s$ has at most $C(\rv)$ many diffeomorphism types. 
\item[(d)] there exists diffeomorphism $F_s: A_{s/100,s}(\bar p_s)\subset C(Z_s)\to M$ such that $F_s$ is an $\epsilon s$-GH map to $A_{s/100,s}(p)$. 
\end{itemize}
\end{lemma}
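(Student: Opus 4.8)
The plan is to run the argument of Lemma~\ref{l:cross_section}, with the space-form rigidity used there (Cheeger's Theorem 10.2 of \cite{Cheeger}) replaced by the K\"ahler $\epsilon$-regularity of Proposition~\ref{p:eps_Kahler}; the monotone quantity \eqref{e:VR montone} is the device that lets the $L^2$ scalar curvature substitute for the two-sided Ricci bound of the Riemannian case. First I would unpack the hypothesis. Since $s\mapsto\cV_s(p)$ and $s\mapsto\int_{B_s(p)}|R|^2$ are both nondecreasing, monotonicity of $\cVR$ together with $|\cVR_{r/2}(p)-\cVR_2(p)|\le\delta$ gives, for every $r/2\le t_1\le t_2\le 2$, simultaneously $0\le\cV_{t_2}(p)-\cV_{t_1}(p)\le\delta$ and $\int_{A_{t_1,t_2}(p)}|R|^2\le\delta$. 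In particular each ball $B_s(p)$, $s\in[r,1]$, is volume pinched at scales comparable to $s$, so Theorem~\ref{t:Almostmetriccone} (in the form with $\Ric\ge-3$ and $\rv$-dependence, cf.\ Lemma~\ref{l:cone_structure}, after rescaling $s$ to unit size) produces a metric cone $(C(Z_s),\bar p_s)$ with $d_{GH}(B_s(p),B_s(\bar p_s))\le\epsilon s$, provided $\delta\le\delta(\rv,\epsilon)$. Since $B_s(\bar p_s)$ then has diameter $\le 3s$, the cross section satisfies $\diam(Z_s)\le 3$; this is (a).

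Next, for a fixed $s\in[r,1]$ I would rescale $B_s(p)$ to unit size and apply Proposition~\ref{p:eps_Kahler}: its hypotheses hold by the previous step (closeness to a metric cone, and $\int|R|^2\le\delta$ on an annulus about the cone scale, using scale invariance of $\int|R|^2$ in real dimension $4$), and its conclusions are the bound $r_{Rei,\epsilon}(Z_s)\ge r_0(\rv,\epsilon)>0$, which is (b), together with $d_{GH}(B_{r_0 s}(x),B_{r_0 s}(0^4))\le\epsilon r_0 s$ for every $x$ in a definite annular neighbourhood of $\partial B_s(p)$ in $M$. By Lemma~\ref{l:Reifenberg_Colding} the latter upgrades to $r_{Rei,\epsilon}(x)\ge c\,r_0 s$ at every point of the smooth annular region $A_{s/100,s}(p)\subset M$. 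Because this region is a genuine Riemannian manifold with Reifenberg radius bounded below by a fixed multiple of $s$, the Reifenberg ($C^{1,\alpha}$-structure) technology of \cite{ChC1} equips its Gromov--Hausdorff limits --- in particular the annulus $A_{s/100,s}(\bar p_s)\subset C(Z_s)$ --- with a manifold structure, so $C(Z_s)\setminus\{\bar p_s\}$, and hence $Z_s$, is a smooth $3$-manifold.

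For (d) I would note that both annuli $A_{s/100,s}(p)\subset M$ and $A_{s/100,s}(\bar p_s)\subset C(Z_s)$ now carry a Reifenberg radius bound $\ge c\,r_0 s$ and, by (a), are $\epsilon s$-Gromov--Hausdorff close; running the Cheeger--Colding gluing (Theorem A.1.3 of \cite{ChC1}) --- nested $2^{-i}s$-nets, compatible transition maps adjusted by averaging, exactly as in Lemma~\ref{l:finite_topology_bodyregion} and Proposition~\ref{p:neck_diff} --- produces, for $\epsilon\le\epsilon(\rv)$, a diffeomorphism of $A_{s/100,s}(\bar p_s)$ onto a region $\cN_s\subset M$ squeezed between $A_{s/100+\eta,s-\eta}(p)$ and $A_{s/100-\eta,s+\eta}(p)$ that is $\epsilon s$-close to the Gromov--Hausdorff identification; adjusting radii as in Proposition~\ref{p:neck_diff} gives the stated $F_s$. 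For (c), the cross sections vary continuously: for $s'\in[s/2,2s]\subset[r/2,2]$, the ball $B_{cs}(p)$ for a fixed comparable $c$ is, after the appropriate rescalings, $\epsilon$-close to both $C(Z_s)$ and a rescaling of $C(Z_{s'})$, so $d_{GH}(Z_s,Z_{s'})$ is small, indeed $\le\epsilon$ once $\delta\le\delta(\rv,\epsilon)$; since $\min\{r_{Rei,\epsilon}(Z_s),r_{Rei,\epsilon}(Z_{s'})\}\ge r_0$, Proposition~\ref{p:homeo_XY} makes $Z_s$ and $Z_{s'}$ diffeomorphic, and covering $[r,1]$ by $O(1)$ overlapping intervals of ratio $\le 2$ shows all the $Z_s$, $s\in[r,1]$, are mutually diffeomorphic. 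Finally $\diam(Z_s)\le 3$ while $r_{Rei,\epsilon}(Z_s)\ge r_0(\rv)\ge(r_0(\rv)/3)\diam(Z_s)$, so Lemma~\ref{l:finite_homeo} bounds the number of diffeomorphism types of the $Z_s$ by $C(\rv)$.

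I expect the main obstacle to be the honest execution of the gluing in (d): carrying out the Cheeger--Colding construction uniformly on the two annuli using only the Reifenberg bounds --- rather than the harmonic-radius estimates available under two-sided curvature bounds --- and checking that the resulting diffeomorphism is genuinely $\epsilon s$-close to the Gromov--Hausdorff map. A secondary delicate point is making the regularity step precise, i.e.\ upgrading the Reifenberg control on $Z_s$ to the differentiable structure in which (c) and (d) are stated. Both ultimately rest on Proposition~\ref{p:eps_Kahler}, whose proof via Cheeger--Colding--Tian's K\"ahler splitting \cite{CCTi_eps_reg} is where the substitute for the Riemannian space-form rigidity is actually produced.
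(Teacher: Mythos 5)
Your proposal is correct and follows essentially the same route as the paper: unwind the $\cVR$-pinching into simultaneous volume pinching and smallness of $\int|R|^2$ on annuli, get (a) from Theorem~\ref{t:Almostmetriccone}, (b) and smoothness of $Z_s$ from Proposition~\ref{p:eps_Kahler}, (c) from Proposition~\ref{p:homeo_XY} plus Lemma~\ref{l:finite_homeo}, and (d) from the Cheeger--Colding gluing. The only variation is that for (d) you invoke the open-set gluing of Lemma~\ref{l:finite_topology_bodyregion}/Proposition~\ref{p:neck_diff}, whereas the paper cites Proposition~\ref{p:homeo_XY} directly (a slightly loose citation since the annulus is noncompact) --- your version is if anything the more careful reading of the same argument.
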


\begin{proof}
For any $\epsilon$, if $|\cVR_{r/2}(p)-\cVR_2(p)|\le \delta<\delta(\epsilon,\rv)$ we have by Cheeger-Colding's almost metric cone Theorem \ref{t:Almostmetriccone} that for any $r/2\le s\le 1$ the ball $B_{s}(p)$ is $\epsilon s$ close to a cone $C(Z_s)$:
\begin{align}
d_{GH}(B_s(p),B_s(\bar p_s))\le \epsilon s.
\end{align}
Since  $\int_{A_{r/2, 2}(p)}|R|^{2}\le \delta$, by Proposition \ref{p:eps_Kahler} we can choose $Z_s$ to be smooth and the Reifenberg radius $r_{Rei,\epsilon}(Z_s)\ge r_0(\epsilon,\rv)>0$.  The Reifenberg radius lower bound implies that $(Z_s,d_s)$ has at most $C(\epsilon,\rv)$ many diffeomorphism types by Lemma \ref{l:finite_homeo}.

For the diffeomorphism between $Z_s$ and $Z_{s'}$, note that if $|s-s'|\le \epsilon^2 s$ then $d_{GH}(Z_s,Z_{s'})\le 10\epsilon $. Applying Proposition \ref{p:homeo_XY} we have that $Z_s$ is diffeomorphic to $Z_{s'}$. Therefore, for any $3 r/4\le s,s'\le 1$, $Z_s$ is diffeomorphic to $Z_{s'}$. For any fixed $s$, the last statement (d) holds directly by Proposition \ref{p:homeo_XY}.
\end{proof}

Let us first recall a deformation lemma from 
Theorem 5.4 of Siebenmann \cite{Sie} or Lemma 4.7 of Kapovitch \cite{Kapovitch}.
\begin{lemma}[Deformation] \label{l:deformation}
Let $(X^{n-1},d_X)$ be a closed  topological manifold with metric $d_X$ satisfying $\diam(X)\le D$ and Reifenberg radius $r_{Rei}\ge a>0$. For any $\epsilon>0$, there exists $\delta(n,D,a,\epsilon)>0$ 
such that the following $\mathcal{D}({X,\delta,\epsilon,0,3/4})$ property holds. 

\noindent
$\mathcal{D}(X,{\delta,\epsilon,0,3/4})$: Let $f: X\times (-1,1)\to X\times \mathbb{R}$ be an embedding and $\delta$-close to the inclusion $X\times (-1,1)\subset X\times \mathbb{R}$, then there exists deformation $\tilde{f}: X\times (-1,1)\to X\times \mathbb{R}$ such that 
\begin{itemize}
\item[(1)] $\tilde{f}$ is an embedding and $\epsilon$-close to the inclusion.
\item[(2)] $\tilde{f}=Id$ in a neighborhood of $X\times \{0\}$.
\item[(3)] $\tilde{f}=f$ in $X\times (-1,-3/4)$ and $X\times (3/4,1)$.  
\end{itemize}
\end{lemma}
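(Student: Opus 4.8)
The plan is to view this as a \emph{uniform} (in $X$) instance of Siebenmann's topological deformation theorem \cite{Sie}; the role of the hypotheses $\diam(X)\le D$ and $r_{Rei}(X)\ge a$ is precisely to force the deformation constant $\delta$ to depend only on $n,D,a,\epsilon$ and not on the particular space $X$. First I would build a controlled atlas. A maximal $(a/100)$-separated net $\{x_1,\dots,x_N\}\subset X$ has $N\le N(n,D,a)$ by the packing estimate implied by the Reifenberg condition, and the balls $B_{a/50}(x_j)$ cover $X$; by the Cheeger--Colding Reifenberg theorem (\cite{ChC1}, underlying Proposition~\ref{p:homeo_XY} and the construction in Lemma~\ref{l:finite_topology_bodyregion}), each $B_{a/50}(x_j)$ is bi-H\"older homeomorphic to a Euclidean ball $B\subset\mathbb R^{n-1}$ by a homeomorphism $\varphi_j$ whose distortion is controlled in terms of $a$ alone. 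Pulling $f$ back through the charts $\varphi_j\times\mathrm{id}_{\mathbb R}$ turns the restriction of the problem over $B_{a/50}(x_j)\times(-1,1)$ into the \emph{model problem}: an embedding of $B\times(-1,1)$ into $\mathbb R^{n-1}\times\mathbb R$ that is $\delta'$-close to the standard inclusion, where $\delta'$ is controlled by $\delta$ and the chart data.

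For the model problem I would invoke the local contractibility of homeomorphism (equivalently embedding) spaces of Euclidean space --- the Edwards--Kirby/torus-trick theorem in the relative form of \cite[Theorem 5.4]{Sie}: an embedding $\delta'$-close to the inclusion, which moreover already equals the inclusion near a prescribed part of the central slice, can be deformed --- through embeddings, by a deformation \emph{supported in a prescribed compact neighborhood of the slice}, hence one we may keep inside $B\times(-3/4,3/4)$ via a cutoff in the $\mathbb R$-factor --- to an embedding that equals the inclusion on a full slab $B''\times(-\rho,\rho)$ over a slightly smaller ball, with all displacements bounded by a modulus $\omega(\delta')\to0$ as $\delta'\to0$, and without disturbing the part of the slab where it was already straightened.

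The global $\tilde f$ is then produced by fragmentation. Shrink to subballs $U_j'\subset B_{a/50}(x_j)$ that still cover $X$, order them $1,\dots,N$, and inductively apply the model deformation over $B_{a/50}(x_j)$ (transported back by the charts): at step $j$ we keep the support inside $X\times(-3/4,3/4)$ and, using the relative form, leave untouched the portion of $X\times\{0\}$ over $\bigcup_{i<j}U_i'$ where $\tilde f$ has already been made the inclusion. After $N$ steps $\tilde f$ equals the inclusion on a neighborhood $X\times(-\rho,\rho)$ of $X\times\{0\}$ and equals $f$ on $X\times((-1,-3/4)\cup(3/4,1))$, which is (2) and (3). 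For (1), give the $j$-th step a displacement budget $\epsilon/(2N)$; since $N=N(n,D,a)$ and the chart distortion depends only on $a$, there is $\delta=\delta(n,D,a,\epsilon)$ making every step applicable with output within that budget, so the composite moves points by at most $N\cdot\epsilon/(2N)<\epsilon$. That the composite is still an embedding follows since it stays $C^0$-close to the inclusion (and $f$ was an embedding), or directly because each step outputs an embedding.

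The genuine depth here is the topological local contractibility theorem, which I would cite from \cite{Sie} rather than reprove --- indeed the lemma is essentially \cite[Theorem 5.4]{Sie}, adapted as in \cite[Lemma 4.7]{Kapovitch} to a space carrying only a metric with a Reifenberg bound rather than a Riemannian metric. The two points that really use the geometric hypotheses and need care are: (i) the \emph{uniformity} of $\delta$ over all admissible $X$, which is exactly what the controlled Reifenberg atlas buys; and (ii) the fragmentation bookkeeping --- arranging that each successive local deformation neither undoes the earlier straightening nor overshoots the displacement budget, and that the accumulated map over the $N(n,D,a)$ steps is still an embedding. I expect (ii) to be the fiddliest part.
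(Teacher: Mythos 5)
Your proposal is correct in outline but proceeds by a genuinely different route from the paper. The paper treats the per-space statement as a black box: by Siebenmann \cite{Sie} (Theorem 5.4) or Kapovitch \cite{Kapovitch} (Lemma 4.7), each fixed $X$ admits \emph{some} $\delta_X>0$ for which $\mathcal{D}(X,\delta_X,\epsilon,0,3/4)$ holds, and the entire content of the paper's proof is the uniformity of $\delta_X$, obtained by contradiction: a sequence $(X_i,d_i)$ violating uniformity subconverges by Gromov precompactness to a limit $X_\infty$ still satisfying $\diam\le D$ and $r_{Rei}\ge a$; Proposition~\ref{p:homeo_XY} supplies homeomorphisms $\Phi_i:X_i\to X_\infty$ that are $\eta_i$-GH maps with $\eta_i\to 0$; conjugating the putative bad embeddings $f_i$ by $(\Phi_i,t)$ transfers them to $X_\infty$, where the fixed $\bar\delta(X_\infty)$ applies, and transferring the resulting deformation back contradicts the failure of $\mathcal{D}(X_i,\bar\delta/10,\epsilon_0,0,3/4)$. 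Your argument instead builds a controlled Reifenberg atlas with $N(n,D,a)$ charts and runs a chart-by-chart fragmentation using the \emph{relative, local} Euclidean form of the Edwards--Kirby/Siebenmann deformation theorem, with a displacement budget per step. What each buys: the paper's compactness argument is short and only needs the qualitative per-space statement, at the cost of being completely non-effective in $\delta$; yours is in principle effective and avoids Gromov compactness and Proposition~\ref{p:homeo_XY}, at the cost of genuinely needing the relative local deformation theorem and the bookkeeping you flag. Two points you should make explicit if you write this up: the Reifenberg charts $\varphi_j$ are only bi-H\"older with uniformly controlled modulus, so $\delta$-closeness to the inclusion becomes $\omega(\delta)$-closeness in the chart and the chart-level displacement bound must be converted back through $\varphi_j^{-1}$ before summing the budget (harmless, since all moduli depend only on $n,a$ and you only need qualitative smallness); and at step $j$ the set on which the map is required to stay fixed (the already-straightened slab over $\bigcup_{i<j}U_i'$ together with $X\times((-1,-3/4]\cup[3/4,1))$) must be kept a definite distance from the support of the $j$-th deformation, which your shrinking $U_j'\subset B_{a/50}(x_j)$ and the cutoff in the $\mathbb{R}$-factor are designed to ensure.
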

\begin{proof}
By Lemma 4.7 of \cite{Kapovitch} or Theorem 5.4 of Siebenmann \cite{Sie}, $\mathcal{D}(X,{\delta_X,\epsilon,0,3/4})$ always holds for $X$ with some  $\delta_X>0$. It suffices to show that $\delta_X$ only depends on $n,D,a,\epsilon$. To see this, we will argue by contradiction and use Gromov's precompactness theorem. Assume there exists $\epsilon_0>0$ and $(X_i,d_i)$ satisfies $\diam(X_i)\le D$, $r_{Rei}(X_i)\ge a>0$ such that $\mathcal{D}(X_i,i^{-1},\epsilon_0,0,3/4)$ fails.  By Gromov's precompactness theorem, up to a subsequence, $(X_i,d_i)\to (X_\infty,d_\infty)$ which still satisfies $\diam(X_\infty)\le D$, $r_{Rei}(X_\infty)\ge a>0$. For $i$ sufficiently large, by Proposition \ref{p:homeo_XY} there exists homeomorphism $\Phi_i: X_i\to X_\infty$ which is also an $\eta_i$-GH map with $\eta_i\to 0$.  In $X_\infty$ by Lemma 4.7 of \cite{Kapovitch} there exists $\bar \delta(X_\infty)$ such that $\mathcal{D}(X_\infty,\bar \delta,\epsilon_0/2,0,1/2)$ holds. Let us show that $\mathcal{D}(X_i,\bar \delta/10,\epsilon_0,0,3/4)$ holds for $i$ sufficiently large. Actually, let $f_i: X_i\times (-1,1)\to X_i\times \mathbb{R}$ be an embedding $\bar \delta/10$-close to the inclusion $X_i\times (-1,1)\subset X_i\times \mathbb{R}$. Thus for $i$ large enough, $g_i=(\Phi_i,t)\circ f_i\circ (\Phi_i^{-1},t): X_\infty \times (-1,1)\to X_\infty \times \mathbb{R}$ is an embedding $\bar \delta/2$ close to the inclusion. Therefore, there exists $\tilde{g}_i: X_\infty \times (-1,1)\to X_\infty \times \mathbb{R}$ such that 
\begin{itemize}
\item[(1)] $\tilde{g}_i$ is an embedding and $\epsilon_0/2$-close to the inclusion.
\item[(2)] $\tilde{g}_i=Id$ in a neighborhood of $X_\infty\times \{0\}$.
\item[(3)] $\tilde{g}_i=g_i$ in $X_\infty\times (-1,-1/2)$ and $X_\infty\times (1/2,1)$.  
\end{itemize}
The map $\tilde{f}_i=(\Phi_i^{-1},t)\circ \tilde{g}_i \circ (\Phi_i,t): X_i\times (-1,1)\to X_i\times \mathbb{R}$ satisfies the properties of deformation. Thus the property $\mathcal{D}(X_i,\bar \delta/10,\epsilon_0,0,3/4)$ holds which is a contradiction. Therefore, we have finished the proof. 
\end{proof}

Based on the above deformation lemma, we can glue diffeomorphism of (d) in Lemma \ref{l:cross_section_Kahler} of different scales to  get homeomorphism on the whole neck. 
\begin{proposition}\label{p:neck_homeo}
Let $(M^4,g,p)$ be a K\"ahler manifold with lower Ricci curvature $\Ric\ge -3$ and $\Vol(B_1(p))\ge \rv>0$. Then for any $\epsilon>0$ there exists $\delta=\delta(\rv,\epsilon)$  such that for any $r\le 1/20$  if 
\begin{align}
|\cVR_{r/2}(p)-\cVR_2(p)|\le \delta
\end{align}
then there exists a smooth manifold $X$ which has at most $C(\rv)$ many homeomoprhism types and a Riemannian metric $g_r$ on $X\times (r,1/2)$ such that 
\begin{itemize}
\item[(a)] there exists homeomorphism $F: X\times (r/10,1)\to \cN$ where  $A_{r/10,1/2}(p)\subset \cN\subset A_{r/20,6/10}(p)$.
\item[(b)] $F: (X\times (s/10,s),g_r)\to M$ is $\epsilon s$-GH map for each $r\le s\le 1/2$.
\end{itemize}
 \end{proposition}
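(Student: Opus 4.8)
The plan is to construct $F$ by gluing, from large scales down to small scales, the radial‑scale diffeomorphisms supplied by Lemma~\ref{l:cross_section_Kahler}(d), reconciling consecutive ones on their overlaps by means of the deformation property $\mathcal D(X,\delta,\epsilon,0,3/4)$ of Lemma~\ref{l:deformation}. Fix $\epsilon>0$ and a small auxiliary constant $\epsilon'=\epsilon'(\rv,\epsilon)$ to be pinned down at the end, and take dyadic scales $s_k=2^{-k-1}$ for $0\le k\le N$, with $N$ chosen so that $s_N$ is comparable to $r$. If $|\cVR_{r/2}(p)-\cVR_2(p)|\le\delta\le\delta(\rv,\epsilon')$, Lemma~\ref{l:cross_section_Kahler} applies at every scale $s_k$ and produces: smooth closed $3$-manifolds $Z_k:=Z_{s_k}$, all mutually diffeomorphic, with $r_{Rei}(Z_k)\ge r_0(\rv)>0$, with uniformly bounded diameter (by Lemma~\ref{l:cross_section_Kahler}(b) together with the Bishop--Gromov volume upper bound on $M$, or equivalently the RCD bound $\Ric\ge 2$ on the cross section used in the proof of Corollary~\ref{c:reifenberglowerKahler}), and with at most $C(\rv)$ diffeomorphism types; together with diffeomorphisms $F_k\colon A_{s_k/100,s_k}(\bar p_k)\subset C(Z_k)\to A_{s_k/100,s_k}(p)$ that are $\epsilon's_k$-GH maps. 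Since metric cones are scale invariant, rescaling $B_{s_{k+1}}(p)$ to unit size exhibits its unit ball as $2\epsilon'$-close both to the unit ball of $C(Z_k)$ and to that of $C(Z_{k+1})$, so $d_{GH}(Z_k,Z_{k+1})\le C\epsilon'$; Proposition~\ref{p:homeo_XY} then yields diffeomorphisms $\psi_k\colon Z_{k+1}\to Z_k$ close to the Gromov--Hausdorff map induced by $F_k^{-1}\circ F_{k+1}$ on an intermediate sphere. Fixing a smooth $3$-manifold $X$ and, recursively, identifications $\mu_k\colon X\to Z_k$ with $\mu_0=\mathrm{id}$ and $\mu_{k+1}=\psi_k^{-1}\circ\mu_k$, we transport everything to a single $X$, carrying the uniformly controlled diameter and Reifenberg bounds.

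Passing to the logarithmic radial coordinate $t=\log\rho$ identifies $C(Z_k)\setminus\{\bar p_k\}$ with $Z_k\times\mathbb R$, and composing with the cone map of $\mu_k$ turns $F_k$ into an embedding $\hat F_k\colon X\times I_k\to M$, where $I_k$ has length $\log 100$. On the overlap $A_{s_k/100,s_k/2}(p)$, which has radius ratio $50$ and so $I_k\cap I_{k+1}$ of length $\log 50>2$, the transition map $\hat F_{k+1}^{-1}\circ\hat F_k$, read in these coordinates, is $C\epsilon'$-close in $C^0$ to the standard inclusion $X\times(-1,1)\hookrightarrow X\times\mathbb R$; here one uses that the $\psi_k$ were chosen through $F_k^{-1}\circ F_{k+1}$ and that the $F_k$ are genuine diffeomorphisms, so Gromov--Hausdorff closeness upgrades to $C^0$ closeness of honest maps, following the Cheeger--Colding construction. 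Now glue inward: set $G_0=\hat F_0$; given a homeomorphism $G_k$ onto an open subset of $M$ defined down to scale $\sim s_k/100$, apply Lemma~\ref{l:deformation} to the transition map on the overlap rescaled to $X\times(-1,1)$ — the threshold $\delta$ is uniform since it depends only on $n$, the uniform diameter and Reifenberg bounds of $X$, and $\epsilon'$ — and keep only the restriction of the resulting map to one of its two halves, obtaining an $\epsilon'$-small modification $\hat F_{k+1}'$ of $\hat F_{k+1}$ that coincides with $G_k$ near the outer seam and coincides with $\hat F_{k+1}$ on the inner half of the overlap and below it; concatenating $G_k$ with $\hat F_{k+1}'$ gives $G_{k+1}$, which remains $C\epsilon'$-close at every scale to the corresponding cone parametrization because each scale is touched by only a bounded number of such deformations. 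After $N$ steps, put $F:=G_N$ and $\cN:=F(\text{domain})$; truncating at radius $\sim r/10$ and affinely reparametrizing the radial variable produces a homeomorphism $X\times(r/10,1)\to\cN$ with $A_{r/10,1/2}(p)\subset\cN\subset A_{r/20,6/10}(p)$. Defining $g_r$ on $X\times(r,1/2)$ by interpolating the rescaled cone metrics $d\rho^2+\rho^2 g_{Z_k}$ across the overlaps — legitimate because consecutive $g_{Z_k}$ are $C\epsilon'$-close, using the regularity of Proposition~\ref{p:eps_Kahler} — makes $F$ an $\epsilon s$-GH map on each $X\times(s/10,s)$ once $\epsilon'$ is chosen small enough in terms of $\epsilon$; the bound $C(\rv)$ on homeomorphism types of $X$ is Lemma~\ref{l:cross_section_Kahler}(c).

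The heart of the argument, and the only genuinely new point compared with Theorem~\ref{t:maintheorem}, is this reconciliation of consecutive parametrizations. In the space‑form case they differ by a map isotopic to an isometry of $\mathbb{S}^{n-1}/\Gamma$, and one glues using that the isometry group is compact with finitely many components; here the cross section is merely a smooth Riemannian $3$-manifold that changes with the scale, so there is no fixed model isometry group to isotope into and the discrepancy between consecutive parametrizations, though $C^0$-small, is otherwise uncontrolled. The resolution is to absorb it into a compactly supported deformation, which is exactly what the Siebenmann--Kapovitch property $\mathcal D(X,\delta,\epsilon,0,3/4)$ delivers. Making this work uniformly over all $N\sim\log(1/r)$ scales is where the real work lies, and it rests on two inputs: first, the uniform lower Reifenberg radius and upper diameter bounds on the cross sections from Lemma~\ref{l:cross_section_Kahler}(b), which prevent the threshold $\delta$ of Lemma~\ref{l:deformation} from degenerating; and second, the fact that the transition maps are honestly $C^0$-close to the inclusion, which forces one to work with the actual diffeomorphisms of Lemma~\ref{l:cross_section_Kahler}(d) and to pin the identifications $\psi_k$ down through them, rather than with abstract Gromov--Hausdorff approximations. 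The remaining points — the radial bookkeeping landing the domain on $X\times(r/10,1)$ and the image in $A_{r/20,6/10}(p)$, and the scale‑by‑scale verification of the Gromov--Hausdorff bound — are routine.
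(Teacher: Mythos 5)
Your proposal is correct and takes essentially the same route as the paper: glue scale‑by‑scale parametrizations from Lemma~\ref{l:cross_section_Kahler}(d) inductively using the Siebenmann--Kapovitch deformation Lemma~\ref{l:deformation}, with the uniform Reifenberg radius and diameter bounds on the cross sections ensuring a uniform deformation threshold. Your account is a fuller write-up of the same argument (explicit logarithmic coordinates, explicit identifications $\mu_k$, a helpful remark on why the space-form isotopy argument of Theorem~\ref{t:maintheorem} is unavailable here), while the paper compresses the bookkeeping and writes the interpolated cone metric as a convex combination $\varphi\hat g_i+(1-\varphi)(\Psi_{s_i,s_{i+1}}^{-1})^*g_{i+1}$; there is no substantive difference.
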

\begin{proof}
The proposition follows by a standard gluing argument based on the deformation Lemma \ref{l:deformation} and local diffeomorphism Lemma \ref{l:cross_section_Kahler}. By Lemma \ref{l:cross_section_Kahler}, for any $\delta'>0$, if $\delta\le \delta(n,\delta',\rv)$ for any $r/2\le s\le 2/3$, there exists diffeomorphism $F_s: ((X\times (s/100,s),g_s)\to M$ which is an $\delta' s$-GH map to $A_{s/100,s}(p)\subset M$. Let $s_i=10^{-i}2/3$ and $F_i=F_{s_i}$. We will glue $F_i$ to produce $F$ by using  Lemma \ref{l:deformation}. It suffices to build a homeomorphism $\hat{F}_i$ satisfying the following for each $i$ such that $r/2\le s_i\le 2/3$.  We will build  $\hat{F}_i: X\times (s_{i+1}/100, s_0)\to M$ inductively such that for any $\epsilon>0$ if $\delta\le \delta(n,\rv,\epsilon)$ we have
\begin{itemize}
\item[i.1] There exists metric $\hat{g}_i$ on $X\times (s_{i}/100, s_0)$ such that $\hat{g}_i=g_i$ on $X\times (s_{i}/100,s_{i}/10)$.
\item[i.2] The map $\hat{F}_i: (X\times (s_{j}/100,s_j),\hat{g}_i)\to M$ is an $\epsilon s_j$-GH map to $A_{s_j/100,s_j}(p)$ for each $j\le i$.
\item[i.3] $\hat{F}_i=F_i$ on $X\times (s_{i}/100, s_{i}/10)$ up to a diffeomorphism on $X$ which does not change the $GH$-closeness.
\end{itemize}
Define $\hat{F}_0=F_0$. Assume $\hat{F}_i$ has been built and let us construct $\hat{F}_{i+1}$. It suffices to define a metric $\hat{g}_{i+1}$ and glue $F_{i+1}$ to $\hat{F}_i$ such that $\hat{g}_{i+1}=\hat{g}_i$ on $(s_{i}/10,s_0)$. 
Actually, it suffices to glue $F_{i}$ and $F_{i+1}$ on $X\times (s_{i}/100, s_{i}/10)$.  By Lemma \ref{l:cross_section_Kahler}, we have  diffeomorphism $\Psi_{s_i,s_{i+1}}: \Big(X\times (s_{i}/80,s_{i}/20), g_{s_{i+1}}\Big) \to \Big(X\times (s_{i}/80,s_{i}/20), g_{s_{i}}\Big)$.
Since $F_i, F_{i+1}$ are $\delta' s_i$-GH maps, the map $H_i:=F_{i+1}^{-1}\circ F_i \circ \Psi_{s_i,s_{i+1}}: \Big(X\times (s_{i}/80,s_{i}/20), g_{s_{i+1}}\Big)\to \Big(X\times \mathbb{R}, g_{s_{i+1}}\Big)$ is well defined and $C(n)\delta' s_i$ close to the inclusion $(X\times (s_{i}/80,s_{i}/20)\subset X\times \mathbb{R}$.    

By Lemma \ref{l:deformation}, for any $\delta''$ if $\delta'\le \delta'(n,\rv,\delta'')$ there exists embedding $\tilde{H}_i: (X\times (s_{i}/80,s_{i}/20)\to X\times \mathbb{R}$ such that
\begin{itemize}
\item[(1)] $\tilde{H}_i$ is an embedding and $\delta'' s_{i+1}$-close to the inclusion.
\item[(2)] $\tilde{H}_i=Id$ in a neighborhood of $X\times \{s_{i}/50\}$.
\item[(3)] $\tilde{H}_i=H_i$ in $X\times (s_{i}/80,s_{i}/70)$ and $X\times (s_{i}/40,s_{i}/20)$.  
\end{itemize} 
Let $\hat{H}_i=\tilde{H}_i$ in $X\times (s_{i}/50,s_i/20)$ and $\hat{H}_i=Id$ in $X\times (s_{i+1}/100,s_{i}/50)$. 

Let us now define the gluing map $\hat{F}_{i+1}: X\times (s_{i+1}/100, s_0)\to M$ by 
$$
\hat{F}_{i+1}:=\left\{\begin{array}{cc}
\hat{F}_i(z), &  z\in X\times (s_{i}/80,s_0)\\
F_{i+1}\circ \hat{H}_i(z)\circ \Psi_{s_i,s_{i+1}}^{-1},& z\in X\times (s_{i+1}/100,  s_{i}/70)
\end{array}\right.
$$
Consider the gluing metric $\hat{g}_{i+1}=\varphi \hat{g}_{i}+(1-\varphi)(\Psi_{s_i,s_{i+1}}^{-1})^*g_{i+1}$ on $X\times (s_{i+1}/100, s_0)$ where $\varphi$ is a smooth cut-off function such that $\varphi\equiv 1$ on $X\times (s_{i}/80,s_0)$ and $\varphi\equiv 0$ on $X\times (s_{i+1}/100,s_{i}/70)$. It is easily check that the map $\hat{F}_{i+1}$ and $\hat{g}_{i+1}$ satisfy $(i+1).1-(i+1).3$ if $\delta''\le \delta''(\epsilon)$. Thus we finish the proof.
\end{proof}

\noindent
\textbf{Proof of Theorem \ref{t:Kahler_homeomorphism}}
 The argument is similar with the proof of Theorem \ref{t:maintheorem} by using the above lemmas. Using the same argument as Theorem \ref{t:decomposition}, we can deduce a decomposition theorem in the K\"ahler case. The remaining argument is similar with the proof of Theorem \ref{t:maintheorem}. $\qed$

\bibliographystyle{plain}

\end{document}